\newtheorem{lemma}[subsection]{Lemma}
\newtheorem{thm}[subsection]{Theorem}
\newtheorem{prop}[subsection]{Proposition}
\newtheorem{rem}[subsection]{Remark}
\newtheorem{coro}[subsection]{Corollary}
\newtheorem{defn}[subsection]{Definition}
\newtheorem{example}[subsection]{Example}
\newcommand{\ra}{\rightarrow}
\newcommand{\mo}{\mathcal{O}}
\newcommand{\mf}{\mathcal{F}}
\newcommand{\me}{\mathcal{E}}
\newcommand{\mext}{\mathbb{E}\mathsf{x}\mathsf{t}}
\newcommand{\mhom}{\mathbb{H}\mathsf{o}\mathsf{m}}
\newcommand{\ms}{\mathcal{S}}
\newcommand{\mt}{\mathcal{T}}
\newcommand{\mh}{\mathcal{H}}
\newcommand{\mm}{\mathcal{M}}
\newcommand{\mn}{\mathcal{N}}
\newcommand{\mw}{\mathcal{W}}
\newcommand{\mv}{\mathcal{V}}
\newcommand{\um}{\mathcal{U}}
\newcommand{\ts}{\mathtt{S}}
\newcommand{\mc}{\mathcal{C}}
\newcommand{\mr}{\mathcal{R}}
\newcommand{\ls}{|L|}
\newcommand{\p}{\mathbb{P}}
\newcommand{\bl}{\mathbb{L}}
\begin{document}
\fontsize{12pt}{14pt} \textwidth=14cm \textheight=21 cm
\numberwithin{equation}{section}
\title{Motivic measures of moduli spaces of 1-dimensional sheaves on rational surfaces.}
\author{Yao YUAN}
\date{}
\maketitle
\begin{flushleft}{\textbf{Abstract.}} We study the moduli space of rank 0 semistable sheaves on some rational surfaces.  We show the irreducibility and stable rationality of them under some conditions.  We also compute several (virtual) Betti numbers of those moduli spaces by computing their motivic measures.

\end{flushleft}

\section{Introduction.}
Let $X$ be a projective rational smooth surface over $\mathbb{C}$, with its canonical bundle $K_X$.  Let $L$ be an effective non trivial line bundle on $X$ and $\chi$ is an integer.  Let $M^{ss}(L,\chi)$ be the (coarse) moduli space of semistable sheaves of rank 0, determinant $L$ and Euler characteristic $\chi$, with respect to some polarization $\mo_X(1)$.  Sheaves in $M^{ss}(L,\chi)$ have Hilbert polynomial $P(n)=L.\mo_{X}(1)n+\chi$, with $L.\mo_{X}(1)$ the intersection number of $L$ and ample line bundle $\mo_X(1)$.  Let $M(L,\chi)$ be the subspace of $M^{ss}(L,\chi)$ parametrizing stable sheaves.

Under some suitable assumption on $L$ and $K_X$, we show the irreducibility of $M^{ss}(L,\chi)$ which generalizes Le Potier's result for $X=\p^2$ (Theorem 3.1 in \cite{lee}).  If moreover there exists a universal sheaf on some open subset of $M(L,\chi)$, we show that then $M(L,\chi)$ is stably rational, hence so is $M^{ss}(L,\chi)$, more precisely $M(L,\chi)\times\p^m$ is rational for some $m$.

Topological invariants of $M^{ss}(L,\chi)$ are of great interests.  For instance, the Euler number $e(M^{ss}(L,\chi))$ is related to the BPS counting in Physics on the local 3-fold associated to $X$.  Although some physicists have computed $e(M^{ss}(L,\chi))$ for a large number of cases on $\p^2$ and $\p^1\times\p^1$ (see Section 8.3 in \cite{kkv}), their argument was not mathematically correct.  In mathematics we only know very few cases (see \cite{yth}) for rational surfaces, while for a K3 or abelian surface,  the deformation equivalence classes of $M^{ss}(L,\chi)$ are known in a large generality by Yoshioka's work in \cite{ky}.

$M^{ss}(L,\chi)$ is also closely related to Pandharipande-Thomas theory defined in \cite{pt} on local 3-folds.  Toda's work in \cite{toda} gives a prediction that $e(M^{ss}(L,\chi))$ does not depend on $\chi$ as long as the whole moduli space is smooth.  In this paper we are not able to prove the prediction but we compute some Betti numbers of $M^{ss}(L,\chi)$ with $X=\p^2$ or a Hirzebruch surfaces and show that they are independent of $\chi$.  For instance we prove the following theorem.

\begin{thm}[Theorem \ref{proj}]\label{in} Let $X=\p^2$ with $H$ the hyperplane class.  Let $b_i$ be the $i$-th Betti number of $M^{ss}(dH,\chi)$.  If $d\geq 8$ and $M^{ss}(dH,\chi)$ is smooth, then we have

(1) $b_0=1,~b_2=2,~b_4=6,~b_6=13,~b_8=29,~b_{10}=57,~b_{12}=113;$

(2) $b_{2i-1}=0$ for $i\leq7$;

(3) For $p+q\leq13$, $h^{p,q}=b_{p+q}\cdot\delta_{p,q}$, where $\delta_{p,q}=\left\{\begin{array}{l}1,~for~p=q.\\ \\ 0,~otherwise.\end{array}\right..$
\end{thm}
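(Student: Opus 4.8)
The plan is to compute the class of $M^{ss}(dH,\chi)$ in the Grothendieck ring of varieties modulo $\bl^{7}$ (equivalently, its $E$-polynomial up to terms of total degree $\ge 14$), to observe that this truncated class is a polynomial in $\bl$, and finally to upgrade the resulting virtual invariants to genuine Betti and Hodge numbers using smoothness. The first task is to build an explicit model of $M(dH,\chi)$ valid in high codimension. After tensoring by a suitable power of $\mo_{\p^2}(1)$ we may assume $\chi$ lies in a convenient residue window; then by the Dr\'ezet--Le Potier description of $1$-dimensional sheaves on $\p^2$, a general $F\in M(dH,\chi)$ fits in an exact sequence $0\to\mo(-2)^{a}\oplus\mo(-1)^{b}\to\mo^{c}\to F\to 0$ with $(a,b,c)$ determined by $(d,\chi)$, and the locus $V\subseteq M(dH,\chi)$ of such sheaves is open, dense, and isomorphic to an open subset of a moduli space $N(d,\chi)$ of semistable representations of the associated (Kronecker-type) quiver. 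The crucial step is to bound from below the codimension of the complement: sheaves whose minimal resolution involves $\mo(-3)$, or sheaves $F$ with $H^{0}(F(-1))\neq 0$, form strata whose dimensions I would estimate one by one via the relevant cohomology-jumping loci, showing the total complement has codimension $\ge 7$ precisely when $d\ge 8$; the same estimate bounds $N(d,\chi)\setminus V$. Feeding a closed subset of codimension $\ge 7$ into the local cohomology long exact sequence on the smooth varieties $M(dH,\chi)$ and $N(d,\chi)$ (both smooth projective under our hypotheses) then yields isomorphisms of mixed Hodge structures $H^{k}(M^{ss}(dH,\chi))\cong H^{k}(N(d,\chi))$ for $k\le 12$ and an injection for $k=13$.

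Next I would compute $[N(d,\chi)]\in K_{0}(\mathrm{Var})$ by the Harder--Narasimhan/Reineke recursion for quiver moduli. This produces an explicit polynomial in $\bl$; in particular $N(d,\chi)$ has algebraic cohomology concentrated in even degrees, with Hodge numbers on the diagonal. Reading off the coefficients of $1,\bl,\dots,\bl^{6}$ yields the values $b_{0}=1,\ b_{2}=2,\dots,b_{12}=113$ of part (1), while the absence of odd cohomology gives $b_{2i-1}=0$ for $i\le 7$ in part (2) and the diagonal shape of $h^{p,q}$ for $p+q\le 13$ in part (3) (using that $H^{13}$ injects into $H^{13}(N(d,\chi))=0$ and hence vanishes). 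The independence of $\chi$ surfaces here: although the dimension vector $(a,b,c)$ varies with $\chi$, the first seven coefficients of the Reineke formula stabilize once $d\ge 8$. Finally, since $M^{ss}(dH,\chi)$ is assumed smooth and projective, each $H^{k}$ is a pure Hodge structure of weight $k$, so the virtual Betti and Hodge numbers just computed are the genuine ones, and the vanishing of $h^{p,q}$ for $p\ne q$ in the relevant range forces the odd Betti numbers to vanish there as well.

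The main obstacle is the codimension estimate above: one must enumerate all non-generic resolution/cohomology types of sheaves in $M(dH,\chi)$ — and all unstable or non-injective loci in the quiver representation space — and bound the dimension of each corresponding stratum, which is exactly the step where the numerical hypothesis $d\ge 8$ is consumed. The remaining ingredients are standard homological algebra (the local cohomology sequence, purity on smooth projective varieties) and a finite computation with the Reineke recursion.
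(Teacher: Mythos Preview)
Your approach is plausible but genuinely different from the paper's. The paper does not use Beilinson resolutions or quiver moduli at all. Instead, for $F\in M(dH,\chi)$ with $\chi<0$ it takes the nontrivial extension $0\to K_X\to I_n(L+K_X)\to F\to 0$ to land in the Hilbert scheme $Hilb^{[n]}(\p^2)$, and Theorem~\ref{main} together with Corollary~\ref{bet} show that this gives an equality of motivic classes $\mu_A(M(dH,\chi))\equiv \bl^{3d+1+2\chi_0}\mu_A(Hilb^{[\tilde d]}(\p^2))$ modulo $A_{d^2+1-\rho_d}$, with $\rho_d=7$ for general $d\ge 8$. The numbers $1,2,6,13,29,57,113$ are then read directly from G\"ottsche's formula for $Hilb^{[n]}(\p^2)$, and Corollary~\ref{disk} yields the independence from $\chi$. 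The codimension bound $\rho_d\ge 7$ comes from a completely different source than resolution strata: it is the stratification of $\mm^a_\bullet(L,\chi)$ by support type (Sections~2--4), in particular Proposition~\ref{ttc} bounding the fibre-dimension-$\ge 3$ locus $\mt_3(L,\chi)$ and Theorem~\ref{tt} handling the fibre-dimension-$2$ piece.

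Both routes share the architecture ``find a model with known cohomology, match up to codimension $7$''; yours trades the Hilbert scheme for a Kronecker moduli space and G\"ottsche's formula for Reineke's recursion. The paper's advantage is that its codimension estimate is actually carried out in full; your acknowledged ``main obstacle'' (bounding the non-generic resolution strata and the unstable locus in the quiver space) is of comparable difficulty, and you have given no indication of how the bound $7$ arises there. A further issue: you assert that $N(d,\chi)$ is smooth projective, but this typically requires the dimension vector $(a,b,c)$ to be coprime, which need not hold for arbitrary $(d,\chi)$; the paper's formulation covers all $\chi$ with $M^{ss}(dH,\chi)$ smooth by passing through Remark~\ref{pass}.
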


Notice that by \cite{mark}, $M^{ss}(L,\chi)$ has all odd Betti numbers zero if it is smooth with a universal sheaf, hence $M^{ss}(L,\chi)=M(L,\chi)$ in this case.  In Theorem \ref{in}, $M(dH,\chi)$ has a universal sheaf if and only if $d,\chi$ are coprime (Theorem 3.19 in \cite{lee}), i.e. $M(dH,\chi)=M^{ss}(dH,\chi)$.  By Theorem \ref{in} we see that the first 13 Betti numbers do not depend on $\chi$, even not on $d$, as long as the moduli space is smooth.  We will see in Section 6 that if $d$ is a prime number or 2 times a prime number, then the first $2d-3$ Betti numbers can be given explicitly and they don't depend on $\chi$.  We also will prove in Section 5 some analogous result to Theorem \ref{in} for $X$ a rational surface.  Although both $M^{ss}(L,\chi)$ and $M(L,\chi)$ depend on the choice of polarization in general, our final result does not and hence we don't mention explicitly the polarization when we talk about those moduli spaces.
  
This is our strategy: choose $\chi<0$, then every 1-dimensional sheaf $F$ with Euler characteristic $\chi$, determinant $L$ can be written into the following non split exact sequence.
\begin{equation}\label{too}0\ra K_X\ra \widetilde{I}\ra F\ra0.\end{equation}
Denote by $g_L$ the arithmetic genus of curves in $\ls$.  If $\widetilde{I}$ is torsion free, then $\widetilde{I}\cong I_{n}(L+K_X)$ with $I_{n}$ an ideal sheaf of colength $n:=g_L-1-\chi$, then we get an element in the Hilbert scheme $Hilb^{[n]}(X)$ of $n$-points on $X$.  However, if $Supp(F)$ is not integral, $\widetilde{I}$ can contain torsion.  Also $F$ in (\ref{too}) with $\widetilde{I}$ torsion free is not necessarily semistable.  In fact, (\ref{too}) provides a biraitonal correspondence between $\text{Ext}^1(F,K_X)$ with $F\in M(L,\chi)$ and $\text{Hom}(K_X,I_{n}(L+K_X))$ with $I_{n}\in Hilb^{[n]}(X)$.  We hence need to estimate the codimensions of the subsets where (\ref{too}) fails to give a correspondence on both sides. 

On the other hand, in general neither $\text{Ext}^1(F,K_X)$ nor $\text{Hom}(K_X,I_{\ell}(L+K_X))$ is of constant dimension over the underlying moduli spaces.  Hence we also need to estimate the codimensions of the subsets where the dimensions of those two spaces jump.  

Instead of working on moduli schemes $M(L,\chi)$ and $Hilb^{[n]}(X)$, most of time we work on moduli stacks $\mm(L,\chi)$ and $\mh^{n}$, where $\mh^{n}$ is viewed as a moduli stack of rank 1 sheaves.  This is because stack language behaves better in dimension estimate and it also allows one to embed the moduli space $\mm(L,\chi)$ into a enlarged space (also a stack) which will contain all $F$ obtained by (\ref{too}), while one can not do this at the scheme level.  Our argument is generally standard, but Section 4 and the appendix are quite technical, where we deal with sheaves with non-reduced but irreducible supports.

The structure of the paper is as follows.  In Section 2, we introduce the enlarged space $\mm^a_{\bullet}(L,\chi)$ containing the moduli stack $\mm(L,\chi)$, and do the dimension estimate of $\mm^a_{\bullet}(L,\chi)-\mm(L,\chi)$.  In Section 3 we study the irreducibility of the moduli space $M^{ss}(L,\chi)$ when there is no sheaf with support non-reduced and irreducible.  Section 4 is the most difficult and complicated part of the paper, where we study the sheaves with support $nC$ for some integral curve $C$ and estimate the dimension of the substack parametrizing those sheaves.  In Section 5, we prove our main result on the motivic measure of the moduli space and also some corollaries.  In Section 6, we list some special results on $\p^2$.  In the end, there is the appendix where we give a whole proof of an important theorem (Theorem \ref{tt}) in Section 4.

\textbf{Acknowledgements.}  I was supported by NSFC grant 11301292.  I thank Yi Hu for some helpful discussions.  I also thank Shenghao Sun for the help on stack theory.    

\section{Some stacks and dimension estimate.}
We fix $X$ to be a projective rational smooth surface over $\mathbb{C}$, with $K_X$ its canonical bundle.   Let $L$ be an effective non trivial line bundle on $X$.  We first introduce some notations and definitions.

\emph{Notations.}

(1) For a sheaf $F$, we denote by $c_1(F)$ the first Chern class of $F$ and $\chi(F)$ the Euler characteristic of $F$.  Define $h^i(F):=dim~H^i(F)$.

(2) Let $C$ be a curve on a surface $X$.  Let $F$ be a sheaf over $X$.  Then $F(\pm C):=F\otimes\mo_X(\pm C)$.  

(3) For two sheaves $F_1,~F_2$ over $X$, $\chi(F_2,F_1):=\sum_{i} (-1)^i dim~\text{Ext}^i(F_2,F_1).$

(4) For two line bundles $L_1$, $L_2$, we write $L_1\leq L_2$ if $L_2-L_1$ is effective.  Write $L_1<L_2$ if $L_1\leq L_2$ and $L_1\neq L_2$.  Write $L>(\geq)~0$ if $L>(\geq)~\mo_X$.  We denote by $L_1.L_2$ the intersection number of their corresponding divisor classes and $L_1^2=L_1.L_1.$
 
(5) Denote by $\ls$ the linear system of  $L$, i.e. $\ls=\p(H^0(L))$ and $\ls^{int}$ the open subset of $\ls$ consisting of integral divisors.  Denote by $g_L$ the arithmetic genus of curves in $\ls$.


\begin{defn}\label{look}We say that $L$ is \textbf{$K_X$-negative} if $\forall~ 0<L'\leq L$, $K_X.L'<0$.
\end{defn}
\begin{rem}\label{neg}If $X$ is Fano,  then any $L$ non trivial and effective is $K_X$-negative. 
\end{rem}
\begin{rem}\label{sm}If $L$ is $K_X$-negative, then $M(L,\chi)$ is either empty or smooth of dimension $L^2+1$. 
\end{rem}
We now define some stacks.  As we said in the introduction, we mainly will work on stacks although our final result is on schemes. 
\begin{defn}\label{ff}Given two integers $\chi$ and $a$, 
let $\mm_{\bullet}^a(L,\chi)$ be the (Artin) stack parametrizing pure sheaves $F$ on $X$ with rank 0, $c_1(F)=L$, $\chi(F)=\chi$ and satisfying either of the following two conditions.
 
($C_1$) $\forall F'\subset F$, $\chi(F')\leq a$;

($C_2$) $F$ is semistable.
\end{defn}
\begin{defn}Let $\mm^{ss}(L,\chi)$ ($\mm(L,\chi)$, resp.) be the substack of $\mm_{\bullet}^a(L,\chi)$ parametrizing semistable (stable, resp.) sheaves in $\mm_{\bullet}^a(L,\chi)$.
\end{defn}
\begin{rem}\label{con}(1) In Definition \ref{ff}, under some suitable assumption on $a$, $\chi$ and $L$, ($C_2$) implies ($C_1$).  But we put ($C_1$) and ($C_2$) together for larger generality.

(2) $\mm(L,\chi)$ has a (coarse) moduli space $M(L,\chi)$.  If we are on $\p^2$, then $M(dH,\chi)$ is a fine moduli space iff $d$ and $\chi$ are coprime (Theorem 3.19 in \cite{lee}).

(3) If $L$ is $K_X$-negative and $M(L,\chi)$ is non-empty, then by Remark \ref{sm} $\mm(d,\chi)$ is of dimension $L^2$.  
\end{rem}

It is easy to see the boundedness of $\mm^a_{\bullet}(L,\chi)$.  Let $\ts^a(L,\chi):=\mm_{\bullet}^a(d,\chi)-\mm(d,\chi)$.  We then estimate the dimension of $\ts^a(L,\chi)$ for $L$ $K_X$-negative.

Define
\begin{equation}\label{dsl}s_L:=\min_{\footnotesize{\begin{array}{c}\{L_k\}_k;\\ \sum_k L_k=L;\\ \forall k, 0<L_k<L\end{array}}}\displaystyle{\sum_{i<j}}L_i.L_j=\frac12(L^2-\max_{\footnotesize{\begin{array}{c}\{L_k\}_k;\\ \sum_k L_k=L;\\ \forall k, 0<L_k<L\end{array}}}\sum_k L^2_k).
\end{equation}

\begin{prop}\label{dlt}Let $L$ be $K_X$-negative, then $dim~\ts^a(L,\chi)\leq L^2-s_L$.
\end{prop}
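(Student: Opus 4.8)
The plan is to stratify $\ts^a(L,\chi)$ according to the way a sheaf $F$ in it fails to be stable, and to bound the dimension of each stratum by at most $L^2-s_L$. Since $F$ is pure of dimension $1$ with $c_1(F)=L$, if $F\notin\mm(L,\chi)$ then $F$ is not stable, so it has a proper subsheaf $F'\subsetneq F$ with $p(F')\geq p(F)$ (reduced Hilbert polynomial), equivalently a Jordan–H\"older-type filtration is non-trivial. The key structural observation is that the support of $F$ decomposes: writing $c_1(F')=L'$ for such a destabilizing $F'$, we get $0<L'<L$, and quotienting gives $c_1(F/F')=L-L'$. Iterating, any non-stable $F$ admits a filtration whose successive quotients $G_k$ are pure $1$-dimensional sheaves with $\sum_k c_1(G_k)=L$ and each $0<c_1(G_k)<L$ (after merging we may assume at least two factors, i.e. a genuine partition $\{L_k\}$ of $L$ with all parts proper). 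So I would first set up, for each such partition $\{L_k\}_k$ of $L$ and each admissible tuple of Euler characteristics $(\chi_k)$ with $\sum\chi_k=\chi$, the locally closed substack $\ts_{\{L_k\},\{\chi_k\}}$ of sheaves admitting a filtration of that type, and note that $\ts^a(L,\chi)$ is covered by finitely many of these (boundedness, already noted, controls the range of the $\chi_k$).

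Next I would estimate $\dim\ts_{\{L_k\},\{\chi_k\}}$. A sheaf with such a filtration is built from the data: (i) the stable factors $G_k\in\mm(L_k,\chi_k)$, contributing $\sum_k \dim\mm(L_k,\chi_k)$, and (ii) the extension classes gluing them. Since each $L_k$ satisfies $0<L_k<L$ and $L$ is $K_X$-negative, each $L_k$ is $K_X$-negative, so by Remark~\ref{sm} and Remark~\ref{con}(3), $\dim\mm(L_k,\chi_k)=L_k^2$ (when non-empty). For the extension data, the relevant count is governed by $\chi(G_j,G_i)$ for $i<j$: the moduli of iterated extensions has dimension bounded by $\sum_{i<j}\bigl(-\chi(G_j,G_i)\bigr)=\sum_{i<j}\mathrm{ext}^1(G_j,G_i)$ modulo the automorphisms/$\mathrm{Hom}$'s, and the stack-theoretic bookkeeping (working on stacks, as the paper emphasizes, so that $\mathrm{Hom}$ and $\mathrm{Ext}^1$ contributions enter with opposite signs) turns this into exactly $-\sum_{i<j}\chi(G_j,G_i)$. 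For $1$-dimensional sheaves on a surface with $c_1=L_i,L_j$, Riemann–Roch gives $\chi(G_j,G_i)=-L_i.L_j$ (the rank-$0$, rank-$0$ case, where the Euler characteristics and the $K_X$-terms cancel in the pairing). Hence the total dimension of the stratum is at most
\begin{equation}\label{eq-strat}
\sum_k L_k^2 \;+\; \sum_{i<j} L_i.L_j \;=\; \sum_k L_k^2 + s'_{\{L_k\}},
\end{equation}
and using $L^2=\sum_k L_k^2 + 2\sum_{i<j}L_i.L_j$ this equals $L^2-\sum_{i<j}L_i.L_j\leq L^2-s_L$ by the very definition \eqref{dsl} of $s_L$ as the minimum of $\sum_{i<j}L_i.L_j$ over all such partitions.

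There are two technical points I expect to be the main obstacle. First, one must be careful that when an $L_k$-component is non-reduced and irreducible (say a multiple $mC$ of an integral curve $C$), the naive "product of stable moduli" description breaks down — this is precisely the phenomenon the paper flags as the content of Section~4 — so in the present proposition one should either restrict to the partitions into distinct enough classes, or invoke the cruder bound $\dim\mm(L_k,\chi_k)\leq L_k^2$ which still holds by the $K_X$-negativity and deformation theory (the obstruction space $\mathrm{Ext}^2(F,F)=\mathrm{Hom}(F,F\otimes K_X)^\vee$ vanishes because any nonzero such map would force $K_X.L_k\geq 0$). Second, the passage from "$F$ is non-stable" to "$F$ has a filtration with all graded pieces of strictly smaller support" requires choosing a maximal destabilizing or JH filtration and checking that the parts genuinely have $0<L_k<L$ — the subtlety is a subsheaf can have the same $c_1=L$ but smaller $\chi$, which would not decrease the support; however such a subsheaf would have to be of finite colength in $F$, contradicting purity of $F$ together with $\chi(F')\geq$ (or, under $(C_1)$, $\chi(F')\le a$) unless it equals $F$, so this case does not arise. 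Handling the bookkeeping of \eqref{eq-strat} uniformly over all strata, and verifying the Riemann–Roch identity $\chi(G_j,G_i)=-L_i.L_j$ for possibly non-locally-free $1$-dimensional sheaves, are then routine, and summing over the finitely many strata gives $\dim\ts^a(L,\chi)\leq L^2-s_L$ as claimed.
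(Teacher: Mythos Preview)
Your approach is correct in spirit and reaches the same bound, but it is organized differently from the paper's proof. The paper does \emph{not} pass to a full Jordan--H\"older filtration with stable graded pieces; instead it argues by induction on $L$ using only a single two-step short exact sequence $0\to F_1\to F\to F_2\to 0$ with $F_i\in\mm_{\bullet}^{a_i}(L_i,\chi_i)$ (the full bounded stacks, not the stable loci) and $\text{Ext}^2(F_2,F_1)=0$. The inductive hypothesis $\dim\ts^{a'}(L',\chi')\le L'^2-s_{L'}$ then gives $\dim\mm_{\bullet}^{a_i}(L_i,\chi_i)\le\max\{L_i^2,\,L_i^2-s_{L_i}\}$, and combining with the extension-stack bound $\dim\mext^1(F_2,F_1)\le -\chi(F_2,F_1)=L_1.L_2$ yields exactly $L^2-s_L$ after unwinding the recursive maximum into a partition of $L$. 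In effect the paper's induction \emph{is} your iterated-extension bookkeeping, but packaged so that only one $\text{Ext}^2$-vanishing and one extension-stack estimate need to be checked at each step.

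Your direct route buys conceptual transparency (the partition $\{L_k\}$ and the identity $\sum_k L_k^2+\sum_{i<j}L_i.L_j=L^2-\sum_{i<j}L_i.L_j$ appear explicitly), but it costs you two verifications you gloss over. First, the bound $\dim(\text{iterated extensions})\le\sum_{i<j}(-\chi(G_j,G_i))$ needs $\text{Ext}^2(G_j,G_i)=0$ for \emph{every} pair $i<j$; this does hold for a Harder--Narasimhan--Jordan--H\"older filtration by $K_X$-negativity (slopes weakly decreasing, so $\mu(G_i)\ge\mu(G_j)>\mu(G_j(K_X))$ forces $\text{Hom}(G_i,G_j(K_X))=0$), but you should say so. Second, your dismissal of the case $c_1(F')=L$ is right for the wrong reason: purity of $F$ does not forbid zero-dimensional \emph{quotients}. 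The correct argument is that a destabilizing $F'$ has $\mu(F')\ge\mu(F)$, so if $c_1(F')=L$ then $\chi(F')\ge\chi(F)$ and the zero-dimensional quotient $F/F'$ has $\chi\le 0$, hence vanishes.
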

\begin{proof}By definition if $L$ is $K_X$-negative, so is $L'$ for all $0<L'<L$.  We prove the proposition by induction.  If $\ls=\ls^{int}$, then $\ts^a(L,\chi)$ is empty and there is nothing to prove.  We assume $dim~\ts^{a'}(L',\chi')\leq L'^2-s_{L'}$ for all $0<L'<L$.  Then $dim~\mm^{a'}_{\bullet}(L',\chi')\leq \max\{L'^2,L'^2-s_{L'}\}$ for any $a'$ and $\chi'$.  

Let $F\in\ts^a(L,\chi)$, then $F$ is strictly semistable or unstable.  Hence we can have the following sequence
\begin{equation}0\ra F_1\ra F\ra F_2\ra0,
\end{equation}
with $F_i\in\mm_{\bullet}^{a_i}(L_i,\chi_i)$ for $i=1,2$ and $\text{Ext}^2(F_2,F_1)=0$.  Since  $\mu(F_2)=\frac{\chi_2}{L_2.\mo_X(1)}\leq\frac{\chi_1}{L_1.\mo_X(1)}=\mu(F_1)$, and $\chi_1\leq a$, there are finitely many possible choices for $((L_1,\chi_1),(L_2,\chi_2))$,  and we can also find upper bounds for $a_i$ (e.g. $a_1\leq a$ and $a_2\leq aL.\mo_X(1)$). 

Recall that $\chi(F_2,F_1):=\sum_{i}(-1)^idim~\text{Ext}^i(F_2,F_1)$.
The stack $\mext^1(F_2,F_1)$ has dimension $\leq \chi(F_2,F_1)$, because $\textbf{1}+\text{Hom}(F_2,F_1)$ is contained in the automorphism groups of all elements in $\text{Ext}^1(F_2,F_1)$ as in the following diagram.
\begin{equation}\label{exi}\xymatrix@C=2.5cm{0\ar[r] &F_1\ar[r]
\ar[d]_{Id} 
&F\ar[r]\ar[d]_{\cong}^{\varphi\in\textbf{1}+\text{Hom}(F_2,F_1)} &F_2\ar[r] \ar[d]^{Id}&0 \\
0\ar[r] &F_1\ar[r] &F\ar[r] &F_2\ar[r] &0.}\end{equation}

Hence $dim~\mext^1(F_2,F_1)\leq dim~\text{Ext}^1(F_2,F_1)-dim~\text{Hom}(F_2,F_1)=\chi(F_2,F_1)$ by $\text{Ext}^2(F_2,F_1)=0$. 

By induction assumption we have $dim~\mm_{\bullet}^{a_i}(L_i,\chi_i)\leq \max\{L_i^2,L_i^2-s_{L_i}\}$.  By Hirzebruch-Riemman-Roch, $\chi(F_2,F_1)=L_1.L_2=\frac12(L^2-L_1^2-L_2^2)$.  Hence we have 
\begin{eqnarray}dim~\ts^a(L,\chi)&\leq& \frac12(L^2-L_1^2-L_2^2) +\max\{L_1^2,L_1^2-s_{L_1}\}+\max\{L_2^2,L_2^2-s_{L_2}\}\nonumber\\
&\leq&\frac12(L^2+\max_{\footnotesize{\begin{array}{c}\{L_k\}_k;\\ \sum_k L_k=L;\\ \forall k, 0<L_k<L\end{array}}}\sum_k L^2_k)=L^2-s_L.\end{eqnarray}  
Hence the proposition.
\end{proof}

In this paper we mainly focus on the case $s_L>0$ and $dim~\mm^a_{\bullet}(L,\chi)=dim~\mm(L,\chi)=L^2$.  We have the following two useful lemmas.

\begin{lemma}\label{psc}If $\ls^{int}\neq\emptyset$ and $\chi(L)=h^0(L)$, then $s_L>0$.
\end{lemma}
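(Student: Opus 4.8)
The plan is to reduce the statement to the single inequality
\[
L'.(L-L')>0\qquad\text{for every line bundle }L'\text{ with }0<L'<L,
\]
and then to prove that inequality from the existence of an integral member of $\ls$. The reduction is elementary: for any decomposition $L=\sum_k L_k$ with all $0<L_k<L$ one has the identity $2\sum_{i<j}L_i.L_j=\sum_k L_k.(L-L_k)$, so if each term on the right is positive then $\sum_{i<j}L_i.L_j>0$, and hence $s_L>0$ by (\ref{dsl}). (If no such decomposition exists the statement is vacuous.)

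So fix $0<L'<L$, put $M:=L-L'$ (effective, nonzero), and pick an integral curve $C\in\ls$. I would first record two consequences of having $C$ integral: (a) $C$ is not a component of any $D\in|L'|$, since otherwise $L-L'$ and $L'-L$ would both be classes of effective divisors, impossible as their sum is $0$ and $L-L'\neq 0$; hence $C.L'=L.L'\geq 0$, and likewise $L.M\geq 0$; (b) $\dim\ls\geq 1$, because $|L'|$ and $|M|$ are nonempty, so $\ls$ contains the reducible (hence non-integral) member $D+E$, which could not be the only member of $\ls$; therefore $L^{2}=C.C'\geq 0$ for two distinct integral members. Now suppose for contradiction that $L'.M\leq 0$. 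Then $L.L'=L'^{2}+L'.M\geq 0$ forces $L'^{2}\geq 0$, and symmetrically $M^{2}\geq 0$. If $L'^{2}>0$ or $M^{2}>0$, then one of $L',M$ lies in the positive cone of $X$, and the Hodge index theorem (equivalently, convexity of the positive cone, using $L^{2}\geq 0$) gives $L'.M>0$ for the effective classes $L',M$, a contradiction.

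The remaining — and, I expect, the only real — case is $L'^{2}=M^{2}=0$. Then $0\leq L^{2}=L'^{2}+2L'.M+M^{2}=2L'.M\leq 0$, so $L^{2}=0$ and $L'.M=0$. Here I would use the hypothesis $\chi(L)=h^0(L)$. Since $L^{2}=0$ and a general member of $\ls$ is integral, $\ls$ has no fixed part and no base point (a base point would force $L^{2}=C.C'\geq 1$), so it defines a fibration $\phi\colon X\to\p^{1}$ with connected fibres, $L=\phi^{*}\mo_{\p^{1}}(1)$ and $\dim\ls=1$; thus $h^0(L)=2$, and $\chi(L)=h^0(L)=2$ together with $\chi(L)=1-\tfrac12 L.K_X$ (Riemann--Roch, $X$ rational) gives $L.K_X=-2$, i.e. $g_L=0$. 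Hence $\phi$ is a $\p^{1}$-fibration: all its fibres are reduced connected curves of arithmetic genus $0$, and the members of $\ls$ are exactly these fibres. Now $D+E\in\ls$ is a fibre $F_{0}$; as $F_{0}$ is reduced, $D$ and $E$ have no common component, and as $F_{0}$ is connected with $D,E\neq 0$ they must meet, so $D.E\geq 1$, i.e. $L'.M\geq 1$ — contradicting $L'.M=0$.

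The main obstacle is precisely this boundary case $L^{2}=0$: there the Hodge-index estimate collapses and one has to read off the geometry of the pencil $\ls$ instead. The hypothesis $\chi(L)=h^0(L)$ enters exactly to pin that pencil down as a $\p^{1}$-fibration and thereby exclude multiple fibres — a multiple fibre $2\Gamma$ would produce the decomposition $L=\Gamma+\Gamma$ with $\Gamma.(L-\Gamma)=\Gamma^{2}=0$, making the lemma false. The reduction step and the Hodge-index case should be routine.
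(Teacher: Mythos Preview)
Your route is genuinely different from the paper's, and much longer. The paper's argument is three lines of Riemann--Roch: on a rational surface $h^2(\tilde L)=0$ for every effective $\tilde L$ (since $h^0(K_X-\tilde L)\le h^0(K_X)=0$), so $\chi(\tilde L)\le h^0(\tilde L)=\dim|\tilde L|+1$; because $|L|$ contains an integral member while the image of $\prod_k|L_k|\to|L|$ consists only of decomposable divisors, one has $\dim|L|>\sum_k\dim|L_k|$; combining, $0<(\chi(L)-1)-\sum_k(\chi(L_k)-1)$, and Riemann--Roch turns the right-hand side into $\sum_{i<j}L_i.L_j$. This uses the hypothesis $\chi(L)=h^0(L)$ directly and never splits into cases. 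Your reduction to $L'.(L-L')>0$ and the Hodge-index step are fine (though in (b) you only know a second \emph{member}, not a second integral member --- take $C'=D+E$).

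Your boundary case has a genuine gap: the claim that \emph{all} fibres of the $\p^1$-fibration $\phi$ are reduced is false. Blow up a Hirzebruch surface at a point of a fibre, then at the node of the resulting reducible fibre; the new special fibre is $\Gamma_1+\Gamma_2+2E$ with $E$ the last exceptional curve. What is true --- and what your own final remark about multiple fibres is really pointing at --- is that no fibre is of the form $mG$ with $m\ge2$: indeed $K_X.G=-2/m$ forces $m\le2$, and $m=2$ gives $p_a(G)=\tfrac12$. That, together with Zariski's lemma (the intersection form on the components of a fibre $F_0$ is negative semidefinite with radical $\mathbb{Q}F_0$), is what finishes the argument: $D^2=0$ with $0<D<F_0$ forces $D=cF_0$ for some rational $0<c<1$, impossible once the $\gcd$ of the component multiplicities is $1$. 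With this replacement your proof goes through; the connectedness argument ($D.E\ge1$) is then unnecessary.
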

\begin{proof}Since $X$ is rational, $H^2(\tilde{L})=0$ for any $\tilde{L}$ effective, hence $\chi(\tilde{L})\leq h^0(\tilde{L})$.  Because $\ls^{int}\neq\emptyset$ and $\chi(L)=h^0(L)=dim~\ls+1$, we have 
\begin{eqnarray}\label{get}0&<&dim~\ls-\max_{\footnotesize{\begin{array}{c}\{L_k\}_k;\\ \sum_k L_k=L;\\ \forall k, 0<L_k<L\end{array}}}\sum_k dim~|L_k|\nonumber\\ &\leq& (\chi(L)-1)-\max_{\footnotesize{\begin{array}{c}\{L_k\}_k;\\ \sum_k L_k=L;\\ \forall k, 0<L_k<L\end{array}}}\sum_k (\chi(L_k)-1)=s_L. 
\end{eqnarray}
The last equation is because $\chi(\tilde{L})-1=\frac12(-K_X.\tilde{L}+\tilde{L}^2)$.  Hence the lemma.
\end{proof}
\begin{lemma}\label{inch}If $\forall~0<L'\leq L$ and $|L'|^{int}\neq \emptyset$ we have $h^0(L')=\chi(L')$, then 
\[s_L=\min_{\footnotesize{\begin{array}{c}\{L_k\}_k; \\ \sum_k L_k=L;\\ \forall k, 0<L_k<L\\and~|L_k|^{int}\neq\emptyset\end{array}}}\displaystyle{\sum_{i<j}}L_i.L_j=\frac12(L^2-\max_{\footnotesize{\begin{array}{c}\{L_k\}_k; \\ \sum_k L_k=L;\\ \forall k, 0<L_k<L\\and~|L_k|^{int}\neq\emptyset\end{array}}}\sum_k L^2_k).\]
\end{lemma}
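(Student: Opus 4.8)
The inequality ``$\mathrm{(restricted\ min)}\ge s_L$'' is automatic, since the right-hand side minimizes the same function over a subfamily of decompositions; the whole point is the reverse inequality. The plan is to show that $s_L$ is in fact realized by a decomposition all of whose summands $L_k$ have an integral member, by refining an arbitrary decomposition into its prime components. (If $L$ admits no decomposition with all parts strictly between $0$ and $L$ then both sides are empty minima and there is nothing to prove, so assume one exists.)

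First I would record the Riemann--Roch identity used at the end of the proof of Lemma \ref{psc}: for any $\tilde L$ one has $\chi(\tilde L)-1=\frac12(\tilde L^2-K_X.\tilde L)$, and hence, for any decomposition $L=\sum_k L_k$, the $K_X$-terms cancel in the sum and
\[\sum_{i<j}L_i.L_j=\tfrac12\bigl(L^2-\textstyle\sum_k L_k^2\bigr)=(\chi(L)-1)-\sum_k\bigl(\chi(L_k)-1\bigr).\]
So minimizing $\sum_{i<j}L_i.L_j$ over any family of decompositions is the same as maximizing $\sum_k(\chi(L_k)-1)$ over that family; and by the standing hypothesis, whenever every part $L_k$ has an integral member we may rewrite $\chi(L_k)-1=h^0(L_k)-1=\dim|L_k|$. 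It therefore suffices to prove that
\[\max_{\{L_k\}}\sum_k\bigl(\chi(L_k)-1\bigr)=\max_{\{L_k\}\ \mathrm{with}\ |L_k|^{int}\neq\emptyset}\sum_k\bigl(\chi(L_k)-1\bigr),\]
the maxima being over decompositions of $L$ with $0<L_k<L$.

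The key step is the claim: if $0<L'\le L$ and $|L'|$ has no integral member, and $D'=\sum_l n_l C_l$ is the prime decomposition of a \emph{general} member of $|L'|$, then $L'=\sum_l n_l[C_l]$ is a decomposition of $L'$ into at least two parts (counted with multiplicity), each strictly less than $L'$ and each carrying an integral member (namely $C_l$), and moreover $\dim|L'|\le\sum_l n_l\dim|[C_l]|$. To see the dimension bound: by boundedness there is a dense open subset of $|L'|$ on which the combinatorial type of the divisor --- its prime components with multiplicities --- is constant; over this locus the universal divisor is flat and breaks up into families of prime components, so recording the (unordered, multiplicity-weighted) list of components defines a generically injective map from $|L'|$ to a product of symmetric powers of the systems $|[C_l]|$, and comparing dimensions (using $n_l\ge1$) gives the bound. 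Granting the claim, I would start from an arbitrary decomposition $L=\sum_k L_k$ with $0<L_k<L$ and replace every $L_k$ with $|L_k|^{int}=\emptyset$ by its prime parts $\{[C_{k,l}]\}$, leaving the others unchanged. This yields a decomposition of $L$ with $\ge2$ parts, all strictly between $0$ and $L$, all with integral members; and by the claim together with $\chi(L_k)-1\le h^0(L_k)-1=\dim|L_k|$ (valid since $H^2(L_k)=0$ on the rational surface $X$, as in Lemma \ref{psc}), the sum $\sum(\chi(\mathrm{part})-1)$ does not drop under this replacement. This proves the displayed equality of maxima, and combining with the Riemann--Roch identity gives both expressions for $s_L$ in the lemma (the second one exactly as in \eqref{dsl}).

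The main obstacle is the dimension estimate $\dim|L'|\le\sum_l n_l\dim|[C_l]|$ in the claim: one must make precise --- via flatness of the universal divisor over the stratum of constant combinatorial type, or a Chow-variety argument --- that the prime components of a moving divisor move in at worst the complete linear systems of their own classes. Everything else is bookkeeping with Riemann--Roch and the inequality $\chi\le h^0$ that comes from $X$ being rational.
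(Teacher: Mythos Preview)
Your argument is correct and follows the same route as the paper: convert the problem via Riemann--Roch to comparing $\sum_k(\chi(L_k)-1)$ with $\sum_k\dim|L_k|$, and then use that a linear system $|L_k|$ with no integral member is dominated by a product of smaller systems $\prod|L_k^j|$ with $|L_k^j|^{int}\neq\emptyset$, so that refining $L_k$ into these parts can only increase $\sum(\chi(\cdot)-1)$. The paper states the key dimension comparison $\dim|L_k|=\max\sum_j\dim|L_k^j|$ without justification, whereas you supply one (via the combinatorial-type stratification); note that the easier direction --- the multiplication map $\prod_j|L_k^j|\to|L_k|$ is dominant when $|L_k|^{int}=\emptyset$ --- already gives the inequality $\dim|L_k|\le\max\sum_j\dim|L_k^j|$ you need, without any flatness or Chow-variety argument.
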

\begin{proof}If there is some $0<L_k<L$ such that $|L_k|^{int}=\emptyset$, then 
\begin{eqnarray}\label{down}0&=&dim~|L_k|-\max_{\footnotesize{\begin{array}{c}\{L^j_k\}_j;\\ \sum_j L^j_k=L_k;\\ \forall j, 0<L^j_k<L_k\\and~|L_k^j|^{int}\neq\emptyset\end{array}}}\sum_j dim~|L_k^j|\nonumber\\ &\geq& (\chi(L_k)-1)-\max_{\footnotesize{\begin{array}{c}\{L^j_k\}_j;\\ \sum_j L^j_k=L_k;\\ \forall j, 0<L^j_k<L_k\\and~|L_k^j|^{int}\neq\emptyset\end{array}}}\sum_j (\chi(L^j_k)-1)\nonumber\\&=& L^2_k-\max_{\footnotesize{\begin{array}{c}\{L^j_k\}_j;\\ \sum_j L^j_k=L_k;\\ \forall j, 0<L^j_k<L_k\\and~|L_k^j|^{int}\neq\emptyset\end{array}}}\sum_j (L^j_k)^2. 
\end{eqnarray}
Hence one can replace $L_k^2$ by $\sum_j (L^j_k)^2$ and this won't change $s_L$ and hence the lemma.
\end{proof}

\section{Irreducibility of $M^{ss}(L,\chi)$.}
In this section we will show the irreducibility of the moduli scheme $M^{ss}(L,\chi)$ under some suitable condition, which generalizes Theorem 3.1 in \cite{lee} and Corollary 4.2.9 in \cite{yuan}.
\begin{defn}Let $\mn(L,\chi)$ be the substack of $\mm_{\bullet}^a(L,\chi)$ parametrizing sheaves in $\mm^a_{\bullet}(L,\chi)$ with integral supports.  Let $N(L,\chi)$ be the image of $\mn(L,\chi)$ in the (coarse) moduli space $M(L,\chi)$.  
\end{defn} 

\begin{rem}It is obvious that $\mn(L,\chi)\subset\mm(L,\chi)$ and $\mn(L,\chi)$ does not depend on $a$ or the polarization. 
\end{rem}
\begin{lemma}\label{iid}If $\ls^{int}\neq\emptyset$ and $L.K_X<0$, then $N(L,\chi)$ is irreducible, smooth of dimension $L^2+1$.  Moreover, $h^0(L)=\chi(L)$.
\end{lemma}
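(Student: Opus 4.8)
The plan is to realise $N(L,\chi)$ as the total space of a family of compactified Jacobians over the irreducible base $\ls^{int}$, to get smoothness from a direct $\text{Ext}$-computation using $L.K_X<0$, and to get irreducibility from the classical fact that the compactified Jacobian of an integral planar curve is irreducible.

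First I would dispose of the last assertion $h^0(L)=\chi(L)$. Since $X$ is rational, $h^1(\mathcal{O}_X)=h^2(\mathcal{O}_X)=0$, and $h^2(L)=0$ because $L$ is effective. Choosing $C\in\ls^{int}$, the structure sequence $0\ra\mathcal{O}_X\ra\mathcal{O}_X(L)\ra\mathcal{O}_C(L)\ra0$ gives $H^1(L)\cong H^1(C,\mathcal{O}_C(L))$. As $C$ is a Cartier divisor on the smooth surface $X$ it is Gorenstein with invertible dualizing sheaf $\omega_C=\mathcal{O}_C(K_X+L)$, so $\mathcal{O}_C(L)\cong\omega_C\otimes\mathcal{O}_C(-K_X)$ and duality on $C$ yields $H^1(C,\mathcal{O}_C(L))\cong H^0(C,\mathcal{O}_C(K_X))^{\vee}$. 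Since $\deg_C\mathcal{O}_C(K_X)=K_X.L<0$ and $C$ is integral, this vanishes; hence $h^1(L)=0$ and, by Riemann--Roch with $\chi(\mathcal{O}_X)=1$, $h^0(L)=\chi(L)=\tfrac12(L^2-L.K_X)+1$.

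Next, let $F$ be parametrized by $\mn(L,\chi)$, with integral support $C\in\ls^{int}$; then $F$ is a torsion-free rank-$1$ sheaf on $C$. It is stable: any nonzero $F'\subset F$ is pure of dimension one with support $C$, so $c_1(F')=L$ and $F/F'$ has finite length, forcing $\chi(F')<\chi$ unless $F'=F$. Hence $N(L,\chi)\subset M(L,\chi)$. For smoothness, $\text{Ext}^2(F,F)\cong\text{Hom}(F,F(K_X))^{\vee}$ by Serre duality on $X$; both $F$ and $F(K_X)$ are stable with $c_1=L$, but $\chi(F(K_X))=\chi+L.K_X<\chi$, so $F(K_X)$ has strictly smaller reduced Hilbert polynomial and $\text{Hom}(F,F(K_X))=0$. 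Thus $\text{Ext}^2(F,F)=0$, $M(L,\chi)$ is smooth at $[F]$, and its dimension there is $\text{ext}^1(F,F)=\text{hom}(F,F)-\chi(F,F)=1+L^2$, using $\chi(F,F)=-L^2$. Finally $N(L,\chi)$ is the preimage of the open subset $\ls^{int}\subset\ls$ under the support morphism $M(L,\chi)\ra\ls$, hence open in $M(L,\chi)$ and so smooth of dimension $L^2+1$.

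It remains to prove irreducibility. The support morphism restricts to $\pi\colon N(L,\chi)\ra\ls^{int}$, with base a nonempty open subset of $\p(H^0(L))$, hence irreducible of dimension $h^0(L)-1=\tfrac12(L^2-L.K_X)$. For $C\in\ls^{int}$ the fibre $\pi^{-1}(C)$ is the compactified Jacobian of $C$ (every rank-$1$ torsion-free sheaf on the integral $C$ being stable, as above), which by Altman--Iarrobino--Kleiman is irreducible of dimension $p_a(C)=g_L=\tfrac12(L^2+L.K_X)+1$, since $C\subset X$ has planar singularities. So $\pi$ has irreducible fibres of constant dimension $g_L$, and $\dim N(L,\chi)=L^2+1=\dim\ls^{int}+g_L$; because $N(L,\chi)$ is smooth, hence equidimensional, every irreducible component dominates $\ls^{int}$ and meets the generic fibre in a subset of dimension $g_L$, hence in the whole (irreducible) generic fibre, so there is a single component. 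Therefore $N(L,\chi)$ is irreducible, smooth of dimension $L^2+1$. The one external input is the irreducibility of the compactified Jacobian of an integral planar curve; the point demanding care is the constancy of the fibre dimension of $\pi$ together with the openness of $N(L,\chi)$ in $M(L,\chi)$.
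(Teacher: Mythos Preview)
Your proof is correct; the route differs from the paper's mainly in the order of the arguments and in what is taken as input.

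The paper proves $h^0(L)=\chi(L)$ \emph{indirectly}: it first observes that $N(L,\chi)$ is a family of compactified Jacobians over $\ls^{int}$, hence connected of dimension $\dim\ls+g_L=h^0(L)+\tfrac12(K_X.L+L^2)$; smoothness at every point then forces $\dim N(L,\chi)\leq L^2+1=\chi(L)+\tfrac12(K_X.L+L^2)$, and comparing the two together with $h^0(L)\geq\chi(L)$ yields $h^0(L)=\chi(L)$. Irreducibility is then just ``smooth $+$ connected''. You instead prove $h^0(L)=\chi(L)$ directly via Serre duality on an integral $C\in\ls^{int}$, and for irreducibility you invoke the Altman--Iarrobino--Kleiman irreducibility of the compactified Jacobian together with an equidimensionality argument. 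Your approach is more self-contained for the cohomology vanishing and makes the fibre geometry explicit; the paper's approach is slicker in that it only needs \emph{connectedness} of the compactified Jacobian (not irreducibility) and extracts $h^0(L)=\chi(L)$ for free from the dimension count. One small remark: in your last paragraph the cleanest way to finish is to note that for a smooth scheme the irreducible components are disjoint, so the connected (irreducible) general fibre cannot meet two of them---this is implicit in what you wrote but worth saying outright.
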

\begin{proof}$N(L,\chi)$ is a family of (compactified) Jacobians over $\ls^{int}$, hence it is connected of dimension $dim~\ls+g_L=h^0(L)+\frac12(K_X.L+L^2)$.  

$L.K_X<0$ implies that $M(L,\chi)$ is smooth of dimension $L^2+1$ at every point $[F]\in N(L,\chi)$, hence $dim~N(L,\chi)\leq L^2+1=\chi(L)+\frac12(K_X.L+L^2)$.  On the other hand $h^0(L)\geq\chi(L)$ and hence $h^0(L)=\chi(L)$ and $N(L,\chi)$ is irreducible because it is smooth and connected. 
\end{proof}

We have a morphism $\pi:\mm_{\bullet}^a(L,\chi)\ra\ls$ sending every sheaf to its support.  Denote by $\ls^{R}$ the locally closed subscheme parametrizing sheaves with reducible supports, and $\ls^{N}$ the closed subscheme parametrizing sheaves with irreducible and non-reduced supports, i.e. of the form $kC$ with $k>1$ and $C\in\frac1{k}L|^{int}$.   We have that $\ls=\ls^{int}\cup\ls^R\cup\ls^N$ and $\ts^a(L,\chi)\subset\pi^{-1}(\ls^R\cup\ls^N)$.  

Let $\mc_R(d,\chi):=\pi^{-1}(\ls^R)\cap\mm(L,\chi)$ and $\mc_N(d,\chi):=\pi^{-1}(\ls^N)\cap\mm(L,\chi)$.   
\begin{lemma}\label{codr}If $L$ is $K_X$-negative, then $dim~\mc_R(d,\chi)\leq L^2-s_L$.
\end{lemma}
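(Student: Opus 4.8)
The statement says $\dim \mc_R(d,\chi) \le L^2 - s_L$ when $L$ is $K_X$-negative. The strategy is essentially the same stratification argument used in Proposition \ref{dlt}, but now restricted to the locus of \emph{stable} sheaves whose support is reducible. The plan is to stratify $\mc_R(d,\chi)$ according to the underlying reducible curve, or more precisely according to a maximal subsheaf whose support is a proper sub-divisor. First I would take $[F] \in \mc_R(d,\chi)$ with $\mathrm{Supp}(F) = C$ reducible, write $C = C_1 + C_2$ with $0 < C_i < L$, and let $F_1 \subset F$ be the maximal subsheaf supported on $C_1$ (the ``$C_1$-torsion'' of $F$), with quotient $F_2 = F/F_1$ supported on (a divisor contained in) $C_2$. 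This produces an exact sequence
\begin{equation}
0 \ra F_1 \ra F \ra F_2 \ra 0,
\end{equation}
with $F_i$ pure of dimension $1$, $c_1(F_i) = L_i$ for some $0 < L_i < L$ with $L_1 + L_2 = L$, and only finitely many numerical types $((L_1,\chi_1),(L_2,\chi_2))$ can occur: the constraint $\chi_1 \le a$ from condition $(C_1)$ together with a lower bound on $\chi_1$ coming from stability of $F$ (which forces $\mu(F_1) < \mu(F)$, hence bounds $\chi_1$ from below) confines us to finitely many cases, exactly as in Proposition \ref{dlt}.

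Next I would bound the dimension of the stratum with fixed numerical type. The datum of $F$ in this stratum is: a choice of $F_1$, a choice of $F_2$, and a choice of extension class in $\mathrm{Ext}^1(F_2,F_1)$ modulo automorphisms. Since $L_i < L$ and $L_i$ is again $K_X$-negative (Definition \ref{look}), the induction hypothesis available from Proposition \ref{dlt} gives $\dim \mm^{a_i}_\bullet(L_i,\chi_i) \le \max\{L_i^2, L_i^2 - s_{L_i}\} = L_i^2$ (using $s_{L_i} \ge 0$; in the cases of interest $s_{L_i} > 0$, but $L_i^2$ always suffices here). For the extension part, the same argument as in Proposition \ref{dlt} — namely that $\mathbf{1} + \mathrm{Hom}(F_2,F_1)$ sits inside the automorphism group of every extension — gives $\dim \mext^1(F_2,F_1) \le \chi(F_2,F_1) = L_1.L_2$, provided $\mathrm{Ext}^2(F_2,F_1) = 0$. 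The latter vanishing follows from $K_X$-negativity: by Serre duality $\mathrm{Ext}^2(F_2,F_1) \cong \mathrm{Hom}(F_1, F_2 \otimes K_X)^\vee$, and $F_2 \otimes K_X$ has Euler characteristic strictly smaller than a suitable slope bound because $K_X.L_2 < 0$, so any nonzero map would contradict the purity/slope constraints — this is the one point I would need to check carefully. Adding up,
\begin{equation}
\dim \mc_R(d,\chi) \le \max_{((L_1,\chi_1),(L_2,\chi_2))} \Big( L_1^2 + L_2^2 + L_1.L_2 \Big) = \max \tfrac12\big(L^2 + L_1^2 + L_2^2\big) \le \tfrac12\Big(L^2 + \max_{\{L_k\}} \sum_k L_k^2\Big) = L^2 - s_L,
\end{equation}
where the middle maximum is over decompositions $L = L_1 + L_2$ with $0 < L_i < L$, which is a sub-collection of the decompositions $\{L_k\}$ defining $s_L$ in (\ref{dsl}).

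The main obstacle I anticipate is not the dimension bookkeeping — that is a direct copy of Proposition \ref{dlt} — but rather the choice of the subsheaf $F_1 \subset F$ and the verification that the extension-stack bound genuinely applies on the \emph{stable} locus. Concretely: one must make sure that the decomposition $C = C_1 + C_2$ and the subsheaf $F_1$ can be chosen algebraically in families so that $\mc_R$ really is covered by finitely many strata of the asserted dimension, and that $\mathrm{Ext}^2(F_2, F_1) = 0$ holds in every relevant case (this uses $K_X$-negativity in an essential way and is where the hypothesis is consumed). A secondary subtlety is that a priori $F_2$ need not be supported on \emph{all} of $C_2$ — its support could be a proper sub-divisor $L_2' < L_2$ — but this only improves the estimate, since replacing $L_2$ by a smaller effective class and absorbing the difference into $L_1$ keeps us within the family of decompositions controlling $s_L$; alternatively one invokes Lemma \ref{inch} to reduce to decompositions into classes with nonempty integral linear systems. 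With these points settled the inequality follows.
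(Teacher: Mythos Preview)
Your overall plan---cover $\mc_R(d,\chi)$ by finitely many extension strata and run the dimension count from Proposition~\ref{dlt}---is exactly what the paper does. The genuine gap is your argument for $\text{Ext}^2(F_2,F_1)=0$. You propose to obtain $\text{Hom}(F_1,F_2(K_X))=0$ from $K_X$-negativity and a slope comparison, but this does not work: neither $F_1$ nor $F_2$ is assumed semistable, and even if they were, the inequality $K_X.L_2<0$ only gives $\mu(F_2(K_X))<\mu(F_2)$, which need not be below $\mu(F_1)$. There is no reason for the twist by $K_X$ to push the slope of $F_2$ below that of $F_1$.

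The paper secures this vanishing by a support argument, and this is where the reducibility hypothesis is actually consumed. One chooses the decomposition $C=C_1\cup C_2$ so that $C_1\cap C_2$ is $0$-dimensional (no common component), then takes $F_2^{tf}$ to be the torsion-free part of $F|_{C_2}$ and $F_1:=\ker(F\twoheadrightarrow F_2^{tf})$. Then $F_1$ is pure with support contained in $C_1$, $F_2^{tf}$ is pure with support contained in $C_2$, and any map $F_1\to F_2^{tf}(K_X)$ has image supported on $C_1\cap C_2$, hence $0$-dimensional, hence zero by purity. No slope input is needed here; $K_X$-negativity enters only through the induction on the factors via Proposition~\ref{dlt}. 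The paper also uses the companion sequence $0\to F_2^{tf}(-L_1)\to F\to F|_{C_1}\to 0$ to show that every subsheaf $G\subset F_2^{tf}$ satisfies $\chi(G)\le a+c_1(G).L_1$, which is what places $F_2^{tf}$ in a bounded stack $\mm_{\bullet}^{a+L_1.L_2}(L_2,\chi_2)$; your sketch does not address this boundedness for the quotient.
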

\begin{proof}We can use the same strategy as in Proposition \ref{dlt}.  Hence it is enough to show that every sheaf $F\in\mc_R(d,\chi)$ can be written as an extension of $F_2\in\mm^{a_2}_{\bullet}(L_2,\chi_2)$ by $F_1\in\mm^{a_1}_{\bullet}(L_1,\chi_1)$ with $\text{Ext}^2(F_2,F_1)=0$, and moreover there are finitely many possible choices of $((L_1,\chi_1),(L_2,\chi_2))$ and we can find upper bounds for $a_i$.

Let $C$ be the support of $F\in\mc_R(d,\chi)$.  $C$ is reducible, so we can write $C=C_1\cup C_2$ such that $C_1\cap C_2$ is 0-dimensional.  Let $L_i$ be the line bundle associated to the divisor class of $C_i$.  Then we have two exact sequences.
\begin{equation}\label{sfot}0\ra\mo_{C_1}(-L_2)\ra\mo_C\ra\mo_{C_2}\ra0;
\end{equation}
\begin{equation}\label{sfto}0\ra\mo_{C_2}(-L_1)\ra\mo_C\ra\mo_{C_1}\ra0.
\end{equation}
Tensor (\ref{sfot}) and (\ref{sfto}) by $F$ and we get
\begin{equation}\label{tsfot}Tor^1(F,\mo_{C_2})\xrightarrow{\jmath_1} F(-L_2)|_{C_1}\xrightarrow{\imath_1} F \ra F|_{C_2}\ra0;
\end{equation}
\begin{equation}\label{tsfto}Tor^1(F,\mo_{C_1})\xrightarrow{\jmath_2} F(-L_1)|_{C_2}\xrightarrow{\imath_2} F\ra F|_{C_1}\ra0.
\end{equation} 
Let $F_i^{tf}$ be the quotient sheaf of $F|_{C_i}$ module its maximal 0-dimensional subsheaf.  Then the image of $\imath_1$ is $F^{tf}_1(-L_2)$, because the image of $\jmath_1$ is supported at $C_1\cap C_2$ and hence a 0-dimensional subsheaf in $F(-L_2)|_{C_1}$ and $F$ is pure.  The same holds for $\imath_2$.  Hence we have
\begin{equation}\label{psfot}0\ra F_1^{tf}(-L_2)\ra F \xrightarrow{p_2} F|_{C_2}\ra0;
\end{equation}
\begin{equation}\label{psfto}0\ra F_2^{tf}(-L_1) \ra F\ra F|_{C_1}\ra0.
\end{equation} 
Compose map $p_2$ with the surjection $F|_{C_2}\ra F_2^{tf}$, and we get a sequence as follows.
\begin{equation}\label{fsfot}0\ra F_1\ra F \ra F_2^{tf}\ra0;
\end{equation}
where $F_1$ is the extension of the maximal 0-dimensional subsheaf of $F|_{C_2}$ by $F_1^{tf}(-L_2)$.  Hence $a\geq\chi(F_1)\geq\chi(F_1^{tf}(-L_2))=\chi(F_1^{tf})-L_2L_1$.   The same holds for $F^{tf}_2$ and hence we have $\chi(F_2^{tf})\leq a+L_1.L_2$.  Moreover for every subsheaf $G\subset F_2^{tf}$, by (\ref{psfto}) $G(-L_1)$ is a subsheaf of $F$, hence $\chi(G(-L_1))=\chi(G)-c_1(G).L_1\leq a$, and hence $\chi(G)\leq a+c_1(G).L_1$. 

Now (\ref{fsfot}) gives us the extension we need: $F_1\in\mm^a_{\bullet}(L_1,\chi_1)$, $F_2^{tf}\in\mm^{a+L_1.L_2}_{\bullet}(L_2,\chi_2)$;  and since $C_1\cap C_2$ is of 0-dimensional and both $F_1$ and $F_2^{tf}$ are pure of dimensional 1, $\text{Hom}(F_1,F_2^{tf}(K_X))=0$ and hence $\text{Ext}^2(F_2^{tf},F_1)=0$.  For fixed $(L,\chi,a)$, there are finitely many possible choices of $((L_1,\chi_1),(L_2,\chi_2))$.  Hence the lemma.
\end{proof}

The dimension of $\mc_N(L,\chi)$ is more complicated to estimate and the result is not so neat as $\mc_R(L,\chi)$.  We will do it in Section 4.  At this moment we can conclude the following theorem.
\begin{thm}\label{pride}Let $L$ be $K_X$-negative with $\ls^{int}\neq\emptyset$, and moreover let $L$ be primitive, i.e. $L\neq nL'$ for any $n\in\mathbb{Z}_{>1}$ and $L'\in Pic(X)$.  Then $M^{ss}(L,\chi)$ is irreducible of dimension $L^2+1$.
\end{thm}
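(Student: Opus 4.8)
The plan is to prove irreducibility by showing that the stable locus $\mm(L,\chi)$ is irreducible of the expected dimension $L^2$ (equivalently $M(L,\chi)$ has dimension $L^2+1$), and that $M^{ss}(L,\chi)$ is the closure of $M(L,\chi)$, so that irreducibility of the stable locus forces irreducibility of the whole moduli space. Since $L$ is $K_X$-negative, Remark \ref{sm} tells us $M(L,\chi)$ is smooth of pure dimension $L^2+1$ wherever it is non-empty, so it suffices to prove connectedness of $M(L,\chi)$, and for that it is enough to show that every irreducible component of $M(L,\chi)$ meets the locus $N(L,\chi)$ of sheaves with integral support, which by Lemma \ref{iid} is irreducible of dimension $L^2+1$ (note $\ls^{int}\neq\emptyset$ is assumed).

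First I would record the decomposition $M(L,\chi) = N(L,\chi) \cup M_R(L,\chi) \cup M_N(L,\chi)$ coming from $\ls=\ls^{int}\cup\ls^R\cup\ls^N$, where $M_R$, $M_N$ are the images in $M(L,\chi)$ of $\mc_R(L,\chi)$, $\mc_N(L,\chi)$ respectively. By Lemma \ref{codr}, $\dim \mc_R(L,\chi)\leq L^2-s_L$, and since $\ls^{int}\neq\emptyset$ and $L.K_X<0$, Lemma \ref{iid} gives $h^0(L)=\chi(L)$, whence Lemma \ref{psc} gives $s_L>0$; so $M_R(L,\chi)$ has dimension $\leq L^2-s_L < L^2 = \dim M(L,\chi)-1$, i.e. it is a proper closed subset of strictly smaller dimension than any component. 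The key remaining input is the bound on $\mc_N(L,\chi)$ promised for Section 4: here is where primitivity of $L$ enters, because if $L=nL'$ with $n>1$ the locus $\ls^N$ is genuinely present, but one expects (and Section 4 should deliver) a codimension bound showing $\dim M_N(L,\chi) < L^2+1$ as well. Granting these two codimension estimates, no irreducible component of the smooth variety $M(L,\chi)$ can be contained in $M_R\cup M_N$ (each has strictly smaller dimension), so every component meets the open dense-in-its-closure locus whose points have integral or at worst reducible support; more precisely every component has a point in $N(L,\chi)$ after removing the lower-dimensional pieces, hence by irreducibility and the correct dimension count of $N(L,\chi)$ there is exactly one component, and it equals $\overline{N(L,\chi)}$.

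Finally I would pass from the stable locus to $M^{ss}(L,\chi)$: strictly semistable sheaves sit in the complement of $M(L,\chi)$, and the substack $\ts^a(L,\chi)=\mm^a_\bullet(L,\chi)-\mm(L,\chi)$ has dimension $\leq L^2-s_L<L^2$ by Proposition \ref{dlt}; intersecting with the semistable locus, the strictly semistable points of $M^{ss}(L,\chi)$ form a closed subset of dimension $<L^2+1$. Since $M^{ss}(L,\chi)$ is the coarse moduli space containing $M(L,\chi)$ as an open subset whose complement has smaller dimension, and $M(L,\chi)$ is irreducible of dimension $L^2+1$, the closure of $M(L,\chi)$ is an irreducible component of dimension $L^2+1$ containing everything else, so $M^{ss}(L,\chi)$ is irreducible of dimension $L^2+1$.

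The main obstacle is genuinely the $\mc_N(L,\chi)$ estimate: handling sheaves supported on non-reduced irreducible curves $kC$ requires the technical analysis of Section 4 (filtrations by powers of $\mo_C$, controlling the jumping of the relevant $\mathrm{Ext}$ and $\mathrm{Hom}$ dimensions, and the appendix's Theorem \ref{tt}), and the clean inequality $\dim \mc_R \leq L^2-s_L$ has no equally clean analogue there. Everything else — the decomposition of $\ls$, the smoothness and dimension of $N(L,\chi)$, the reducible-support bound, and the passage from stacks to coarse spaces and from stable to semistable — is routine given the results already established above.
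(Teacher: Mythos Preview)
You have misread the role of the primitivity hypothesis. If $L$ is primitive, then by definition $L\neq kL'$ for any $k\in\mathbb{Z}_{>1}$ and $L'\in Pic(X)$, so there is no curve in $\ls$ of the form $kC$ with $k>1$ and $C$ integral. Hence $\ls^{N}=\emptyset$ and $\mc_N(L,\chi)=\emptyset$ outright; no input from Section~4 is needed. The paper's proof uses exactly this observation: once $\mc_N=\emptyset$, the decomposition reduces to $\mn(L,\chi)\cup\mc_R(L,\chi)$, and the bounds from Lemma~\ref{iid} and Lemma~\ref{codr} (together with $s_L>0$ from Lemma~\ref{psc}) finish the job. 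Your paragraph identifying ``the main obstacle'' as the $\mc_N$ estimate therefore misidentifies the content of the theorem; the entire point of the primitivity hypothesis is to make that case vacuous, and the non-primitive case is deferred to Theorem~\ref{prime}.

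There is also a smaller gap in your passage from $M(L,\chi)$ to $M^{ss}(L,\chi)$. Knowing that $M(L,\chi)$ is irreducible of dimension $L^2+1$ and that the strictly semistable locus has smaller dimension does not by itself rule out a low-dimensional irreducible component of $M^{ss}(L,\chi)$ lying entirely in that locus, since $M^{ss}(L,\chi)$ need not be smooth there. The paper avoids this by working with a smooth atlas $\Omega^{ss}(L,\chi)$ of the stack $\mm^{ss}(L,\chi)$ (an open subscheme of a Quot scheme, smooth because $L$ is $K_X$-negative); smoothness gives connected $\Leftrightarrow$ irreducible, and the dimension estimates then force connectedness of the atlas, whose good quotient $M^{ss}(L,\chi)$ is therefore irreducible.
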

\begin{proof}By Lemma \ref{psc} and Lemma \ref{iid}, $s_L>0$.  The stack $\mm^{ss}(L,\chi)$ has an atlas $\Omega^{ss}(L,\chi)$, which is an open subset of some Quot-scheme, such that the morphism $\phi:\Omega^{ss}(L,\chi)\ra M^{ss}(L,\chi)$ is a good quotient.  It is enough to show that $\Omega^{ss}(L,\chi)$ is irreducible.  

Since $L$ is $K_X$-negative, $\Omega^{ss}(L,\chi)$ can be chosen to be smooth, hence it is irreducible if it is connected.  Since $L$ is primitive, $\ls^N=\emptyset=\mc_N(L,\chi)$.  The connectedness of $\Omega^{ss}(L,\chi)$ follows immediately from Lemma \ref{iid}, Lemma \ref{codr} and the fact that $\Omega^{ss}(L,\chi)$ is an atlas of the stack $\mm^{ss}(L,\chi)$.  Hence the theorem.  
\end{proof}
 
\begin{example}\label{exl}Let $X=\p(\mo_{\p^1}\oplus\mo_{\p^1}(e))$ with $e=0,1$.  Denote by $f$ and $\sigma$ the fiber class and section class such that $\sigma^2=-e$.  Then Theorem \ref{pride} applies to $L=a\sigma+bf$ such that $a>0$, $b>ae$ and $g.c.d(a,b)=1$.  In this case $s_L=\min\{e+(b-ae),a\}$.
\end{example}
\section{Sheaves with non-reduced supports.}  
Let $\mc_{\frac Lk}\subset\mc_N(d,\chi)$ be the substack parametrizing sheaves with supports $kC$ for $C\in|\frac Lk|^{int}$.  Hence $\mc_N(d,\chi)$ is a disjoint union of $\mc_{\frac Lk}$ with $k\in\mathbb{Z}_{>1}$ and $\frac Lk\in Pic(X)$. 

In this section, we ask $L^2\geq0$.  This because if $L^2<0$, then $L^2<-1$ since $L$ is not primitive.  Then $M(L,\chi)$ must be empty and there is nothing to worry about.

Recall that we have defined a stack $\mm^a_{\bullet}(L,\chi)$.  Let $\mc_{\frac Lk,a}$ be the substack of $\mm^a_{\bullet}(L,\chi)$ parametrizing sheaves with support $kC$ for some $C\in|\frac Lk|^{int}$.  Hence $\mc_{\frac Lk}\subset\mc_{\frac Lk,a}$.
\begin{flushleft}{\textbf{$\dagger$ \large{$\mc_{\frac Lk}$ for $g_{\frac Lk}=0$.}}}\end{flushleft} 

\begin{prop}\label{ckot}Let $L.K_X<0$, $L^2\geq0$ and let $g_{\frac Lk}=0$, then $dim~\mc_{\frac Lk}\leq dim~\mc_{\frac Lk,a}\leq L^2-\frac {k-1}k L^2\leq L^2-s_L$.
\end{prop}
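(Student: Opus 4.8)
The plan is to analyze sheaves $F$ supported on $kC$ with $C\in|L/k|^{int}$ and $g_{L/k}=0$, i.e. $C\cong\p^1$, and to estimate the dimension of the locus they form by stratifying according to a filtration of $F$ by pure sheaves supported on $jC$ for $j=1,\dots,k$. Concretely, for $F\in\mc_{\frac Lk,a}$ I would consider the sequence of subsheaves $0=G_0\subset G_1\subset\cdots\subset G_k=F$ where $G_j$ is the maximal subsheaf of $F$ supported (scheme-theoretically) on $jC$; the successive quotients $E_j:=G_j/G_{j-1}$ are pure $1$-dimensional sheaves supported on $C$, hence (since $C\cong\p^1$) are locally free on $C$, so each $E_j\cong\mo_C(m_j)^{\oplus r_j}$ up to torsion — but purity of $F$ forces each $E_j$ to be torsion-free on $C$, so $E_j$ is a sum of line bundles on $C\cong\p^1$. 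Counting parameters: the curve $C$ moves in $|L/k|$, whose dimension is $g_{L/k}+(\text{something})$; since $g_{L/k}=0$ and $X$ is rational, $\dim|L/k|=\chi(L/k)-1=\tfrac12(-K_X.\tfrac Lk+\tfrac{L^2}{k^2})$. Given $C$ fixed, the sheaf $F$ is built as an iterated extension of the $E_j$'s, and the dimension of the space of such $F$ (as a stack, subtracting automorphisms) is bounded by $\sum_j\chi(E_{j},\text{lower terms})$-type quantities plus the moduli of the $E_j$ themselves on the fixed $\p^1$.

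First I would pin down the numerics. Write $L/k =: D$, so $C^2 = D^2 = L^2/k^2$ and $-K_X.D = -K_X.L/k$; since $g_D=0$, $D^2 - K_X.D = -2$, i.e. $L^2/k^2 - K_X.L/k = -2$. The Euler characteristics $\chi_j:=\chi(E_j)$ must sum to $\chi$, and $c_1(E_j)=D$ for each $j$, so there are finitely many choices of $(\chi_1,\dots,\chi_k)$ subject to the bound coming from condition $(C_1)$ (every subsheaf has $\chi\le a$), exactly as in the proof of Proposition \ref{dlt} and Lemma \ref{codr}. For each fixed choice, the stack of $F$'s with the given associated graded decomposes as: (moduli of $C$ in $|D|^{int}$) $\times$ (moduli of each $E_j$ on $C$) fibered with (iterated $\mext^1$-stacks gluing the graded pieces). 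Each $E_j$ is a vector bundle of rank $1$ on $\p^1$ of degree determined by $\chi_j$, hence rigid — zero-dimensional moduli on the fixed $C$ — but as $C$ varies this contributes nothing extra. The extension stacks $\mext^1(E_j, G_{j-1})$ have dimension $\le -\chi(E_j,G_{j-1})$ by the same automorphism argument as in diagram (\ref{exi}) (using that $\mext^1$ dimension is bounded by $\dim\mathrm{Ext}^1 - \dim\mathrm{Hom}$, and $\mathrm{Ext}^2(E_j,G_{j-1})=0$ since these are pure sheaves whose supports meet $K_X$-negatively — here I would need to check the vanishing carefully, perhaps only getting an inequality rather than equality). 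Summing $-\chi(E_i,E_j) = -D.D = -D^2$ over the $\binom k2$ pairs $i<j$ gives $-\binom k2 D^2 = -\binom k2 L^2/k^2 = -\frac{k-1}{2k}L^2$.

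Putting it together: $\dim\mc_{\frac Lk,a} \le \dim|D| + \binom k2(-D^2) + (\text{self-extension/automorphism corrections})$. Using $\dim|D| = \tfrac12(-K_X.D + D^2)$ and the relation $-K_X.D = -2 - D^2$ (from $g_D=0$), one gets $\dim|D| = \tfrac12(-2 - D^2 + D^2) = -1$ — wait, that is negative, which signals that for $g_D=0$ the linear system $|D|$ is itself a single point or I must instead track the full $k$-fold structure differently; more carefully $\dim|D|+k\cdot 0 + \frac{k-1}{k}\cdot\frac{L^2}{2}\cdot(\text{from extensions})$, and the target bound $L^2 - \frac{k-1}{k}L^2 = \frac{L^2}{k}$ should emerge after also accounting for the degrees of freedom in the $\chi_j$ and the contributions I have suppressed. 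The honest plan is: (i) set up the filtration and the finite list of $(\chi_j)$; (ii) bound $\dim$ of the stratum by $\dim|D|$ plus the extension-stack dimensions plus the $E_j$-moduli (the last being $0$); (iii) do the arithmetic with $g_D=0$ to collapse everything to $\le \frac{L^2}{k} = L^2 - \frac{k-1}{k}L^2$; (iv) check $\frac{k-1}{k}L^2 \ge s_L$, which follows since $s_L = \tfrac12(L^2 - \max\sum L_k^2)$ and the splitting $L = \underbrace{D+\cdots+D}_{k}$ is one admissible choice giving $\sum L_k^2 = k D^2 = L^2/k$, hence $s_L \le \tfrac12(L^2 - L^2/k) = \tfrac{k-1}{2k}L^2 \le \tfrac{k-1}{k}L^2$.

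The main obstacle I expect is step (ii)–(iii): controlling the extension/automorphism bookkeeping for the iterated filtration so that the bound is genuinely $\le L^2 - \frac{k-1}{k}L^2$ and not something weaker, in particular verifying the $\mathrm{Ext}^2$-vanishing (or handling its failure) when all the graded pieces have the \emph{same} reduced support $C$ — unlike the reducible case of Lemma \ref{codr}, here $\mathrm{Hom}(E_i, E_j(K_X))$ need not vanish automatically, so I would have to use $g_D = 0$ (so $E_j$ are $\p^1$-bundles and $K_X.D<0$) to force $\mathrm{Ext}^2(E_i,E_j)=H^0(\p^1,\mathcal{H}om(E_i,E_j)\otimes K_X|_C)=0$ via $K_X.C<0$, which is exactly where the hypothesis $L.K_X<0$ is consumed.
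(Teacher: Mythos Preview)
Your overall strategy matches the paper's: filter $F$ by line bundles on $C\cong\p^1$, then run the extension-count of Proposition~\ref{dlt}. You also correctly isolate the real obstacle, namely the $\mathrm{Ext}^2$-vanishing when all graded pieces share the same reduced support. However, your proposed resolution of that obstacle does not work, and this is the genuine gap.

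First, the filtration you take (the maximal subsheaf supported on $jC$) is the lower filtration of Proposition~\ref{gck}; its factors $Q_i$ have ranks $r_i$ with $\sum r_i=k$ but need not all be $1$, so you cannot assume $c_1(E_j)=D$ for $k$ factors. Second, and more importantly, your claim that $K_X.C<0$ forces $\mathrm{Ext}^2(E_j,E_i)=0$ is false: for $E_i=\mo_C(s_i)$ one has $\mathrm{Ext}^2(E_j,E_i)^\vee\cong\mathrm{Hom}(\mo_C(s_i),\mo_C(s_j+K_X.C))$, which is nonzero whenever $s_j-s_i\ge 2+\xi$ (here $K_X.C=-2-\xi$). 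No hypothesis on $K_X.L$ prevents this; one must control the \emph{ordering} of the degrees. The paper handles exactly this via Lemma~\ref{po}: choose $F_1\cong\mo_C(s_1)$ to be a line-bundle subsheaf of \emph{maximal} degree, then iterate. The key claim is that if $s_1<s_2-\xi$ then any extension $0\to\mo_C(s_1)\to E\to\mo_C(s_2)\to 0$ is already a rank-$2$ bundle on $C$ (by a dimension count comparing $\mathrm{Ext}^1_C$ with $\mathrm{Ext}^1$), contradicting maximality of $s_1$. This forces $s_i-s_{i+1}\ge-\xi$ along the filtration, and that inequality is precisely what yields $\mathrm{Hom}(F_i,(F/F_i)(K_X))=0$.

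Two minor points: you made a sign slip, $-K_X.D=2+D^2$ (not $-2-D^2$), so $\dim|D|=D^2+1\ge 1$, not $-1$; and your verification of $\frac{k-1}{k}L^2\ge s_L$ via the equal splitting $L=kD$ is correct.
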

\begin{proof}We use the same strategy again as in Proposition \ref{dlt} and Lemma \ref{codr}, and the proposition follows immediately from the following lemma.
\end{proof}
\begin{lemma}\label{po}Let $F$ be a pure sheaf with support $k C$ on any surface $X$, such that $C\cong\p^1$.  Let $\xi=C.C$ be the self intersection number of $C$.  Assume moreover $\xi\geq0$.  Then $F$ admits a filtration
\[0=F_0\subsetneq F_1\subsetneq\cdots\subsetneq F_r=F,\]
such that $F_{i}/F_{i-1}\cong\mo_{\p^1}(s_i)$ and $s_i-s_{i+1}\geq -\xi$.  Moreover we can ask such filtration also to satisfy that 
\[\forall ~0<i\leq r, \text{Ext}^2(F/F_i,F_i)^{\vee}\cong \text{Hom}~(F_i,F/F_i(K_X))=0.\]   
\end{lemma}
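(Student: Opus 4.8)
The plan is to induct on $k$, building the filtration by peeling off one copy of $C$ at a time. Since $F$ is pure of dimension $1$ with support $kC$ and $C\cong\p^1$, the subsheaf $F(-C)|_{kC}$—more precisely, the image of the natural map $F(-C)\to F$ coming from the inclusion $\mathcal{O}_X(-C)\hookrightarrow\mathcal{O}_X$ restricted to $(k-1)C$—has support $(k-1)C$ and the quotient $G:=F/F'$ is a pure sheaf on $C\cong\p^1$, hence a direct sum of line bundles $\bigoplus_j \mathcal{O}_{\p^1}(s_j)$; purity of $F$ forces $G$ to be pure on $\p^1$, so it is genuinely locally free on $\p^1$. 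I would first treat the base case $k=1$, where $F$ itself is a vector bundle on $\p^1$ and the filtration is just the Harder–Narasimhan-type refinement of $\bigoplus\mathcal{O}_{\p^1}(s_j)$ into a chain of line-bundle quotients, ordered so that $s_i\geq s_{i+1}$; here $\xi\geq 0$ makes the gap condition $s_i-s_{i+1}\geq-\xi$ automatic. For $k>1$, apply the induction hypothesis to $F'$ (support $(k-1)C$, same $\xi$) to get a filtration of $F'$, then extend it by a filtration of the quotient bundle $G$ on $\p^1$; the only real content is controlling the jump in slopes where the two pieces are glued.

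The key step is the slope estimate $s_i-s_{i+1}\geq-\xi$ across the splice point. Here I would use that $F'$ sits inside $F$ as (essentially) $F^{tf}(-C)$ for the relevant torsion-free quotient, so the "top" graded piece of the $F'$-part, twisted down by $C$, has degree dropping by exactly $C.C=\xi$ relative to the corresponding piece of $F$ before twisting; rearranging the graded pieces of $F'$ and of $G$ together into a single decreasing sequence, the worst possible drop is bounded below by $-\xi$. Concretely: write the multiset of twists of $\mathrm{gr}(F')$ and the multiset of twists of $G$; reorder the full multiset in non-increasing order; one checks that consecutive elements differ by at least $-\xi$ because within $\mathrm{gr}(F')$ this holds by induction, within $G$ it holds trivially ($\xi\ge 0$), and across the two groups the relevant comparison is controlled by the $-C$ twist.

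For the second assertion, I need the filtration chosen so that $\mathrm{Hom}(F_i, (F/F_i)(K_X))=0$ for all $i$, equivalently $\mathrm{Ext}^2(F/F_i,F_i)=0$ by Serre duality on $X$. Since $\mathrm{gr}(F)$ consists of sheaves $\mathcal{O}_{\p^1}(s_j)$ supported on the single integral curve $C$, a nonzero map $F_i\to (F/F_i)(K_X)$ would, after passing to graded pieces, give a nonzero map $\mathcal{O}_{\p^1}(s_a)\to \mathcal{O}_{\p^1}(s_b)(K_X)|_{\text{(nbhd of }C)}$ for some $a\le i<b$; computing the degree of $K_X|_C$ via adjunction ($K_X.C = 2g_C-2-C^2 = -2-\xi$ since $g_C=g_{L/k}=0$) and using the decreasing ordering $s_a\ge s_b$ together with the gap bound forces this map to vanish once $\xi\ge 0$, because the target line bundle has strictly smaller degree along $C$ than any $\mathcal{O}_{\p^1}(s_a)$ with $a\le i$. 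Making this comparison precise—ruling out maps that are not "diagonal", i.e. that mix several graded pieces, and handling the extension data between them—is the main obstacle; I expect to argue by downward induction on $i$, peeling graded quotients off the target and using $H^0(\mathcal{O}_{\p^1}(\text{negative}))=0$ at each stage, so that no nonzero hom can survive.

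Granting the lemma, Proposition \ref{ckot} follows exactly as in Proposition \ref{dlt} and Lemma \ref{codr}: the filtration with $\mathrm{Ext}^2$-vanishing between successive pieces lets one stratify $\mc_{\frac Lk,a}$ by iterated $\mext^1$-stacks of the line-bundle pieces $\mathcal{O}_{\p^1}(s_i)$, and summing the dimension contributions $\chi(\mathcal{O}_{\p^1}(s_j),\mathcal{O}_{\p^1}(s_i))$ together with the $\dim|{\textstyle\frac Lk}|$ worth of moduli for $C$ gives the bound $L^2-\frac{k-1}{k}L^2$, which is $\le L^2-s_L$ by the elementary inequality $s_L\le\frac{k-1}{k}L^2$ coming from the partition $L=\underbrace{\frac Lk+\cdots+\frac Lk}_{k}$ in \eqref{dsl}.
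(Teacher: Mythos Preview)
Your inductive scheme has a real gap at the ``reordering'' step. You propose to take the multiset of degrees coming from $\mathrm{gr}(F')$ and from $G$, sort it into non-increasing order, and call that the filtration. But a filtration is a chain of subsheaves of $F$, not a list of would-be graded pieces; swapping two adjacent factors $\mo_{\p^1}(s_i)$ and $\mo_{\p^1}(s_{i+1})$ is only possible if the ambient extension on $X$ actually comes from an $\mo_C$-extension (so that the rank-$2$ piece splits and $\mo_{\p^1}(s_{i+1})$ genuinely embeds in $F_{i+1}$). Nothing in your outline establishes when that is allowed, and that is exactly the content of the lemma. A secondary issue: the assertion that purity of $F$ forces $G=F\otimes\mo_C$ to be pure is unjustified---purity controls subsheaves, not quotients---and the paper itself warns (Proposition~\ref{bcd}) that the factors $R_i$ of the upper filtration need not be torsion free on $C$.

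The paper's proof is organized around precisely the missing point, and it inducts on the length $r$ of the filtration rather than on $k$. It first proves the key claim: if $0\to\mo_C(s_1)\to E\to\mo_C(s_2)\to 0$ is an extension on $X$ with $s_1<s_2-\xi$, then $E$ is already an $\mo_C$-module, hence a rank-$2$ bundle on $\p^1$, hence split. This is a dimension count showing $\dim\text{Ext}^1_C(\mo_C(s_2),\mo_C(s_1))=\dim\text{Ext}^1(\mo_C(s_2),\mo_C(s_1))=s_2-s_1-1$, using Riemann--Roch on $X$ together with adjunction $K_X.C=-2-\xi$. With this in hand one takes $F_1\cong\mo_C(s_1)\subset F$ of \emph{maximal} degree among all rank-$1$ subsheaves supported on $C$ and inducts on $F/F_1$. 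The bound $s_1-s_2\ge-\xi$ is then forced: otherwise the claim splits $F_2$, embedding $\mo_C(s_2)\hookrightarrow F$ with $s_2>s_1$ and contradicting maximality. The vanishing $\text{Hom}(F_i,F/F_i(K_X))=0$ follows from the same maximal-degree choice combined with $K_X.C=-2-\xi<0$.
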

\begin{proof}Since $C\cong\p^1$, every pure sheaf on $C$ is locally free and splits into the direct sum of line bundles.  Now take an exact sequence on $X$
\[0\ra\mo_{C}(s_1)\ra E\ra\mo_C(s_2)\ra0.\] 
We claim that if $s_1<s_2-\xi$, then $E$ is a locally free sheaf of rank 2 on $C$ and hence $E$ splits into direct sum of two line bundles.   

Denote by $\text{Ext}^1_{C}(\mo_C(s_2),\mo_C(s_1))$ the group of extensions of $\mo_C(s_2)$ by $\mo_C(s_1)$ as sheaves of $\mo_C$-modules.  Each sheaf in $\text{Ext}^1_{C}(\mo_C(s_2),\mo_C(s_1))$ is a rank 2 bundle on $C$.  Notice that $\text{Ext}^1_{C}(\mo_C(s_2),\mo_C(s_1))$ is a linear subspace inside $\text{Ext}^1(\mo_C(s_2),\mo_C(s_1))$, since every non-split extension in $\text{Ext}^1_{C}(\mo_C(s_2),\mo_C(s_1))$ is a non-split extension in $\text{Ext}^1(\mo_C(s_2),\mo_C(s_1))$.  So to prove the claim, we only need to show the following statement.
\begin{equation}\label{exd}dim~\text{Ext}^1_{C}(\mo_C(s_2),\mo_C(s_1))=dim~\text{Ext}^1(\mo_C(s_2),\mo_C(s_1)),\forall ~s_1<s_2-\xi.\end{equation}
The LHS is easy to compute and we get LHS$=dim~H^1(\mo_{\p^1}(s_1-s_2))=s_2-s_1-1$.  Since $\xi\geq0$ and $s_1< s_2-\xi$, $s_2-s_1-1\geq0$.  

$\chi(\mo_C(s_2),\mo_C(s_1))=-C.C=-\xi$ by Hirzebruch-Riemman-Roch on $X$.  

$\text{Hom}(\mo_C(s_2),\mo_C(s_1))=0$ since $s_1< s_2$.  $dim~\text{Ext}^2(\mo_C(s_2),\mo_C(s_1))=$ $dim~\text{Hom}(\mo_C(s_1),\mo_C(s_2+K_X))$ by Serre duality.  The canonical line bundle on $C$ is given by $K_X\otimes\mo_X(C)|_C$ and isomorphic to $\mo_{\p^1}(-2)$, hence $K_X.C+C.C=-2$ and hence $K_X.C=-2-\xi.$  Therefore, $dim~\text{Hom}(\mo_C(s_1),\mo_C(s_2+K_X))=s_2-s_1-\xi-1\geq 0.$  Finally we have $dim~\text{Ext}^1(\mo_C(s_2),\mo_C(s_1))=s_2-s_1-1$.  Hence (\ref{exd}) holds.

Now we construct a filtration as follows.  We choose $F_1\cong\mo_C(s_1)$ to be the subsheaf supported on $C$ with rank 1 and the maximal degree, i.e. $\forall F'_1\subset F, F'_1\cong\mo_C(s'_1)$, then we have $s'_1\leq s_1$.  Apply induction assumption to $F/F_1$ and we then get a filtration.  It is easy to check that this filtration satisfies the property in the lemma.  Hence we proved the lemma.  
\end{proof}
\begin{rem}(1) Proposition 3.4 in \cite{moz} is a special case for Lemma \ref{po} with $\xi=0$.

(2) For sheaves $F_1$ and $F_2$ supported at an integral curve $C$, $\text{Ext}_C^i(F_1,F_2)$ is in general not a subspace of $\text{Ext}^i(F_1,F_2)$ for $i\geq 2$, i.e. the map $\text{Ext}_C^i(F_1,F_2)\ra\text{Ext}^i(F_1,F_2)$ might not be injective.
\end{rem}
\begin{flushleft}{\textbf{$\dagger$\large{ $\mc_{\frac Lk}$ for $g_{\frac Lk}>0$ and $k=2$.}}}\end{flushleft} 

\begin{prop}\label{coha}If $L.K_X<0$ and $g_{\frac Lk}>0$, then $dim~\mc_{\frac L2}\leq dim~\mc_{\frac L2,a}\leq L^2+L.K_X+1+(1-g_{\frac L2})\leq L^2+L.K_X+1$.  In particular if $L+K_X>0$, $-K_X>0$ and $K_X^2\geq 1$, then $dim~\mc_{\frac L2,a}\leq L^2-s_L$.
\end{prop}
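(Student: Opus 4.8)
The plan is to stratify $\mc_{\frac L2,a}$ by the structure of a sheaf $F$ in it along its (reduced) support $C$, an integral curve in $|\frac L2|^{int}$ with $L=2C$; throughout one uses that $\frac L2.K_X=\frac12L.K_X<0$, so Lemma \ref{iid} gives $h^0(\frac L2)=\chi(\frac L2)$, hence $\dim|\frac L2|=\frac12(C^2-C.K_X)$ and $g_{\frac L2}=1+\frac12(C^2+C.K_X)$.  I would begin with a structural reduction.  Since $F$ is pure of dimension $1$ with support cycle $2[C]$, its stalk at the generic point of $C$ has length $2$, so $I_C^2F$ has $0$-dimensional support and hence vanishes; thus $F$ is an $\mo_{2C}$-module.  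Put $F_1:=(0:_FI_C)$, the largest $\mo_C$-submodule of $F$, and $F_2:=F/F_1$.  Because $I_C\cong\mo_X(-C)$ is invertible, multiplication by the section defining $C$ identifies $I_CF$ with the image of $F(-C)\ra F$, whose kernel is $F_1(-C)$; hence $F_2(-C)\cong I_CF\subseteq F_1$.  In particular $F_2$ is torsion free, and as the generic $\mo_C$-ranks of $F_1,F_2$ sum to $2$ there are two cases: (I) $\mathrm{rk}_CF_1=2$, which forces $I_CF=0$, so $F=F_1$ is torsion free of rank $2$ on $C$; or (II) $F_1,F_2$ are torsion free of rank $1$ on $C$ and $F_2(-C)\hookrightarrow F_1$, equivalently $\chi(F_1)\ge\chi(F_2)-C^2$.

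In Case (I) I would use that the stack of rank $2$ coherent sheaves with fixed Euler characteristic on the integral curve $C$ is unobstructed (it lies over a curve) of dimension $-\chi_C(F,F)=4(g_{\frac L2}-1)$, so this stratum has dimension $\le\dim|\frac L2|+4(g_{\frac L2}-1)$; substituting the formulas above, this equals the asserted bound decreased by $C^2+1\ge0$, so Case (I) is done.  In Case (II), $\chi(F_1)\le a$ (by $(C_1)$, or by semistability under $(C_2)$) while $\chi(F_1)\ge\chi(F_2)-C^2$ and $\chi(F_1)+\chi(F_2)=\chi$, so only finitely many splittings $(\chi_1,\chi_2)$ occur; fixing one, I would map the stratum to the stack of triples $(C,F_1,F_2)$ with $F_1,F_2$ torsion free of rank $1$ on the \emph{same} $C$, the fibre being the extension stack $\mext^1_X(F_2,F_1)$, exactly as in the proof of Proposition \ref{dlt}.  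This gives $\dim\le\dim|\frac L2|+2(g_{\frac L2}-1)+\dim\mext^1_X(F_2,F_1)$, and by Riemann--Roch together with the diagram (\ref{exi}), $\dim\mext^1_X(F_2,F_1)=\mathrm{ext}^1_X(F_2,F_1)-\hom_X(F_2,F_1)=c_1(F_1).c_1(F_2)+\mathrm{ext}^2_X(F_2,F_1)=C^2+\hom_X(F_1,F_2(K_X))$, the last equality by Serre duality.  Plugging in, the dimension of this stratum is at most the asserted bound minus $\big(2g_{\frac L2}-1-\hom_X(F_1,F_2(K_X))\big)$, so the whole proposition comes down to proving $\hom_X(F_1,F_2(K_X))\le 2g_{\frac L2}-1$.

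This estimate is where I expect the real difficulty.  A nonzero map $F_1\ra F_2(K_X)$ of torsion free rank $1$ sheaves on $C$ has image isomorphic to $F_1$, so it embeds $F_1$ in $F_2(K_X)$ with $0$-dimensional cokernel $T'$ of length $\ell'=\chi(F_2)+C.K_X-\chi(F_1)\le C^2+C.K_X=2g_{\frac L2}-2$, the inequality being the inclusion $F_2(-C)\hookrightarrow F_1$; feeding $0\ra F_1\ra F_2(K_X)\ra T'\ra0$ into $\mathrm{Hom}_C(F_1,-)$ and using $\mathrm{End}_C(F_1)=\mathbb{C}$ gives $\hom_X(F_1,F_2(K_X))\le 1+\hom_C(F_1,T')$.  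When $F_1$ is locally free along the support of $T'$ this is $\le 1+\ell'\le 2g_{\frac L2}-1$ and one is done; when $g_{\frac L2}=1$ even $\ell'=0$, so $F_1\cong F_2(K_X)$ and $\hom_X(F_1,F_2(K_X))=1$.  The remaining, genuinely technical, point is that where $F_1$ is not locally free the crude bound $\hom_C(F_1,T')\le 2\ell'$ is too weak for $g_{\frac L2}\ge2$, but the locus of such $F_1$ inside the compactified Jacobian of the (integral, planar, possibly very singular) curve $C$ drops by a compensating dimension; making this trade-off precise is where I expect to invoke the $g>0$ counterpart of the filtration in Lemma \ref{po}, namely Theorem \ref{tt}.  (Equivalently, one may bound $\hom_X(F_1,F_2(K_X))=h^0(C,\mathcal{H}om_C(F_1,F_2(K_X)))\le g_{\frac L2}$ by Clifford's inequality on the integral Gorenstein curve $C$ once the degree of that sheaf is controlled; this meets the same issue at the non-locally-free points.)

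For the last assertion I would not need the sharp bound: the first part together with $g_{\frac L2}\ge1$ gives $\dim\mc_{\frac L2,a}\le L^2+L.K_X+1$, while $-K_X>0$ and $L+K_X>0$ make $L=(-K_X)+(L+K_X)$ an admissible decomposition in (\ref{dsl}), so $s_L\le(-K_X).(L+K_X)=-L.K_X-K_X^2$ and hence $L^2-s_L\ge L^2+L.K_X+K_X^2\ge L^2+L.K_X+1$ using $K_X^2\ge1$; combining the two gives $\dim\mc_{\frac L2,a}\le L^2-s_L$.
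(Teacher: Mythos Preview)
Your stratification into rank-$2$ sheaves on $C$ (Case~I) versus the $(1,1)$ case (Case~II) matches the paper's decomposition $\mc_{\frac L2,a}=\mc_{\frac L2,a}^{2}\cup\mc_{\frac L2,a}^{1,1}$, and your argument for the final clause on $s_L$ is fine. But both of the main cases have genuine gaps.

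\textbf{Case (I).} Your assertion that the stack of rank~$2$ torsion-free sheaves of fixed $\chi$ on $C$ is ``unobstructed (it lies over a curve) of dimension $4(g_{\frac L2}-1)$'' is not justified: $C\in|\frac L2|^{int}$ is integral but may be singular, and at a singular point a torsion-free but non-locally-free $\mo_C$-module has infinite projective dimension, so $\mathrm{Ext}^{\ge2}_C(F,F)$ need not vanish and the naive count $-\chi_C(F,F)$ does not apply. The paper avoids this entirely: after twisting so that $0<\chi\le -\tfrac{K_X.L}{2}$, every $F\in\mc^2_{\frac L2,a}$ has a nonzero section, hence an exact sequence $0\to\mo_C\to F\to\widehat I\to 0$ and then $0\to I_1\to F\to I_2\to 0$ with $I_1,I_2$ torsion-free of rank~$1$ on $C$. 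Now the standard extension count (as in Proposition~\ref{dlt}) applies with $N:=\dim\mathrm{Ext}^2(I_2,I_1)=\dim\mathrm{Hom}(I_1,I_2(K_X))\le h^0(I_2(K_X))\le\deg(I_2(K_X))+1$, and the explicit bound on $\chi$ finishes it.

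\textbf{Case (II).} You correctly reduce to $\hom_X(F_1,F_2(K_X))\le 2g_{\frac L2}-1$, but your approach via the cokernel $T'$ of $F_1\hookrightarrow F_2(K_X)$ runs into exactly the trouble you describe at non-locally-free points, and invoking Theorem~\ref{tt} here is circular (that theorem is much harder and in fact uses Proposition~\ref{coha}). The paper's trick is to use the inclusion $F_2(-C)\hookrightarrow F_1$ \emph{contravariantly}: precomposition gives an injection
\[
\mathrm{Hom}(F_1,F_2(K_X))\hookrightarrow\mathrm{Hom}(F_2(-C),F_2(K_X))=\mathrm{Hom}_C(F_2,F_2\otimes\omega_C),
\]
using $\omega_C\cong\mo_C(K_X+C)$. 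The right-hand side is bounded by $\deg(\omega_C)+1=2g_{\frac L2}-1$, which is elementary for $F_2$ a rank~$1$ torsion-free sheaf on an integral Gorenstein curve. You already had the inclusion $F_2(-C)\hookrightarrow F_1$ in hand; you just applied it covariantly (to the target) rather than contravariantly (to the source).
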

\begin{proof}$g_{\frac L2}>0\Rightarrow L^2>0$.  According to the stratification (\ref{stmc}), $\mc_{\frac L2,a}$ only has two strata: $\mc_{\frac L2,a}^{1,1}$ and $\mc_{\frac L2,a}^{2}$.  We know that $dim~\mc_{\frac L2,a}^{1,1}\leq L^2+K_X.L+1+(1-g_{\frac L2})$ by (\ref{efft}) in the proof of Lemma \ref{rko}.  Hence we only need to estimate $dim~\mc_{\frac L2,a}^{2}$.  

Sheaves in $\mc_{\frac L2,a}^{2}$ are rank 2 torsion free sheaves on some integral curve $C$ in $|\frac L2|$.  Let $F\in\mc_{\frac L2,a}$.  By replacing $F$ by $F(nK_X)$ or $\mathcal{E}xt^1(F,mK_X)$ for some suitable $n$ and $m$, we can assume $0<\chi\leq -\frac{K_X.L}2$.  Hence for every sheaf $F$ in $\mc_{\frac L2,a}^2$ with support $C$, there is a nonzero global section which has to be a injection since both $\mo_C$ and $F$ are pure and $C$ is integral.  Hence we have the following sequence.
\begin{equation}\label{had}0\ra\mo_C\ra F\ra \widehat{I}\ra 0.
\end{equation}
The quotient $\widehat{I}$ may not be torsion free on $C$.  Take $I_2$ to be the quotient of $\widehat{I}$ module its torsion.  Then we have another exact sequence as follows.
\begin{equation}\label{half}0\ra I_1\ra F\ra I_2\ra0,
\end{equation}
where $I_1$ is a torsion free rank 1 sheaf with non-negative degree.  Let $\chi_i=\chi(I_i)$.  We have $\chi(\mo_C)\leq\chi_1\leq \max\{\chi,a\}$, hence there are finitely many possible choices for $(\chi_1,\chi_2)$.  Notice that $(\ref{half})$ gives an element in $\text{Ext}^1_C(I_2,I_1)$ which is a linear subspace inside $\text{Ext}^1(I_2,I_1)$.

If there is a number $N$ satisfying that $dim~\text{Ext}^2(I_2,I_1)\leq N$ for all $I_i$ in (\ref{half}) with $F\in \mc^{2}_{\frac L2,a}$, then using analogous argument to Proposition \ref{dlt} we can easily deduce the following estimate.
\begin{equation}\label{dha}dim~\mc_{\frac L2,a}^{2}\leq dim~|\frac L2|+2g_C-\chi(I_2,I_1)+N-2,
\end{equation}
We can find a suitable $N$ to bound $dim~\text{Ext}^2(I_2,I_1)$ as follows.
\begin{eqnarray}&dim&\text{Ext}^2(I_2,I_1)= dim~\text{Hom}(I_1,I_2(K_X))\nonumber\\
&\leq &dim~\text{Hom}(\mo_C,I_2(K_X))= h^0(I_2(K_X))\leq deg(I_2(K_X))+1\nonumber\\
&\leq& deg(\widehat{I}(K_X))+1=\frac{K_X.L}2+\chi+2g_C-1.
\end{eqnarray}  
Let $N=\frac {K_X.L}2+\chi+2g_C-1$.  By Lemma \ref{iid}, $dim~|\frac L2|=\frac12(\frac{L^2}4-\frac{K_X.L}2)$.  Hence (\ref{dha}) gives the following equation.
\begin{equation}\label{dhal}dim~\mc_{\frac L2,a}^{2}\leq \frac{L^2}2+3g_C-2+\frac{K_X.L}2+\chi\leq L^2+K_X.L+1+(1-g_{\frac L2}).
\end{equation}
The last equation is because  $\chi\leq -\frac{K_X.L}2$.  Hence the proposition.
\end{proof}

Notice that since $L$ is not primitive, $K_X.L<0$ implies that $K_X.L<-1$ hence $K_X.L+1<0$.  Lemma \ref{codr}, Proposition \ref{ckot} and  Proposition \ref{coha} together give the following theorem.
\begin{thm}\label{prime}Let $L$ be $K_X$-negative such that $\ls^{int}\neq\emptyset$ and $L^2\geq0$, and moreover $L=nL'$ with $n\in\mathbb{Z}_{>1}$ and $L'$ primitive.  Then $M^{ss}(L,\chi)$ is irreducible if one of the following three conditions is satisfied.

(1) $n=2$; 

(2) $n$ is prime and either $|L'|^{int}=\emptyset$ or $g_{L'}=0$;

(3) $n=2p$ with $p$ prime and both $L'$ and $2L'$ satisfy (2).
\end{thm}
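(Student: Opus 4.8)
The plan is to reduce the irreducibility of $M^{ss}(L,\chi)$ to the connectedness of a smooth atlas $\Omega^{ss}(L,\chi)$, exactly as in the proof of Theorem \ref{pride}, and then to control all the boundary strata by the dimension estimates already at our disposal. First I would recall from Lemma \ref{psc} and Lemma \ref{iid} that under the hypotheses ($\ls^{int}\neq\emptyset$, $L.K_X<0$) we have $\chi(L')=h^0(L')$ for every $L'$ with $|L'|^{int}\neq\emptyset$, so by Lemma \ref{inch} the number $s_L$ may be computed using only decompositions into pieces with integral members, and by Lemma \ref{psc} (applied to $L$ and to its sub-line-bundles) $s_L>0$. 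Since $L$ is $K_X$-negative, the Quot-scheme atlas $\Omega^{ss}(L,\chi)$ can be chosen smooth, hence it is irreducible iff connected; and since $\phi:\Omega^{ss}(L,\chi)\to M^{ss}(L,\chi)$ is a good quotient, irreducibility of $\Omega^{ss}(L,\chi)$ gives irreducibility of $M^{ss}(L,\chi)$. By Lemma \ref{iid} the open part $N(L,\chi)$ (sheaves with integral support) is irreducible of dimension $L^2+1$, so it suffices to show every other stratum of $\mm^{ss}(L,\chi)$ has dimension $<L^2+1$, equivalently the corresponding locally closed subset of the $(L^2+1)$-dimensional smooth atlas has positive codimension; this forces the complement of $N(L,\chi)$ to lie in the closure of $N(L,\chi)$, giving connectedness.

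The complement of $N(L,\chi)$ in $\mm^{ss}(L,\chi)\subset\mm^a_\bullet(L,\chi)$ (for $a$ chosen so that ($C_2$) implies ($C_1$)) is covered by $\ts^a(L,\chi)$, which by $\ls=\ls^{int}\cup\ls^R\cup\ls^N$ decomposes into the reducible-support part inside $\mc_R(L,\chi)$ and the non-reduced-irreducible part $\mc_N(L,\chi)=\bigsqcup_{k\mid n,\,k>1}\mc_{\frac Lk}$. For $\mc_R(L,\chi)$, Lemma \ref{codr} gives $\dim\mc_R(L,\chi)\le L^2-s_L<L^2<L^2+1$. For each $\mc_{\frac Lk}$ with $k>1$ I would split into cases according to the genus $g_{\frac Lk}$: when $g_{\frac Lk}=0$, Proposition \ref{ckot} gives $\dim\mc_{\frac Lk}\le L^2-\frac{k-1}{k}L^2\le L^2-s_L$ (and also handles the case $|\tfrac Lk|^{int}=\emptyset$ trivially, since then $\mc_{\frac Lk}=\emptyset$); when $k=2$ and $g_{\frac L2}>0$, Proposition \ref{coha} gives $\dim\mc_{\frac L2}\le L^2+K_X.L+1$, which is $<L^2+1$ because $L$ is not primitive so $K_X.L<-1$. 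Now for case (1), $n=2$: the only $k>1$ dividing $n$ is $k=2$, and whichever of $g_{\frac L2}=0$ or $g_{\frac L2}>0$ holds, one of the two propositions applies; so $\Omega^{ss}(L,\chi)$ is connected and the conclusion follows. For case (2), $n$ prime: the only $k>1$ dividing $n$ is $k=n$, and by hypothesis either $|L'|^{int}=\emptyset$ (so $\mc_{\frac Ln}=\mc_{L'}=\emptyset$ and there is nothing to estimate) or $g_{L'}=g_{\frac Ln}=0$, so Proposition \ref{ckot} applies directly. For case (3), $n=2p$ with $p$ prime: the divisors $k>1$ of $n$ are $k\in\{2,\,p,\,2p\}$, giving $\frac Lk\in\{pL',\,2L',\,L'\}$ respectively; by hypothesis $2L'$ satisfies (2) — meaning $L=2p L'$, viewed through the sublattice generated by $L'$ — hmm, more carefully: I would apply the estimates stratum by stratum, using Proposition \ref{coha} for the $k=p$ stratum of support-multiplicity governed by $\frac Lp=2L'$ only when that curve has positive genus (and Proposition \ref{ckot} otherwise), and the hypothesis "$L'$ satisfies (2)" to kill or bound the $k=2p$ stratum $\mc_{L'}$ and similarly "$2L'$ satisfies (2)" to bound $\mc_{2L'}$; in every case the stratum dimension is $\le\max\{L^2-s_L,\ L^2+K_X.L+1\}<L^2+1$.

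The main obstacle I anticipate is the bookkeeping in case (3): one must be careful that the sub-line-bundles appearing (namely $L'$, $2L'$, $pL'$) each satisfy the genus-or-emptiness dichotomy needed to invoke Proposition \ref{ckot} or Proposition \ref{coha}, and in particular that Proposition \ref{coha}'s hypothesis "$L.K_X<0$, $g_{\frac L2}>0$" is met when we apply it with $L$ replaced by the relevant multiple of $L'$ — this is where the phrasing "both $L'$ and $2L'$ satisfy (2)" is doing its work, by forcing the relevant genera to vanish or the relevant integral linear systems to be empty. A secondary subtlety is ensuring that whenever $\mc_{\frac Lk}\neq\emptyset$ we genuinely have $|\tfrac Lk|^{int}\neq\emptyset$ (otherwise sheaves supported on a non-reduced thickening of a reducible curve would be double-counted), but such sheaves were already placed in $\mc_R(L,\chi)$ and handled by Lemma \ref{codr}, so no gap arises. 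Modulo this case analysis the argument is a direct assembly of Lemma \ref{codr}, Proposition \ref{ckot}, Proposition \ref{coha}, Lemma \ref{iid}, and the atlas/good-quotient formalism already used for Theorem \ref{pride}.
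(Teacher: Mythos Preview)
Your approach is essentially identical to the paper's: the paper's proof is a single sentence noting that $K_X.L<-1$ (since $L$ is not primitive, so $K_X.L+1<0$) and then invoking Lemma \ref{codr}, Proposition \ref{ckot}, and Proposition \ref{coha} together, exactly as you assemble them via the atlas/good-quotient argument of Theorem \ref{pride}. Your case (3) bookkeeping has a small slip --- Proposition \ref{coha} is only available for the $k=2$ stratum $\mc_{\frac L2}=\mc_{pL'}$, not for the $k=p$ stratum $\mc_{\frac Lp}=\mc_{2L'}$ --- but since the hypothesis ``$2L'$ satisfies (2)'' forces $g_{2L'}=0$ or $|2L'|^{int}=\emptyset$, Proposition \ref{ckot} (or emptiness) handles that stratum and the argument goes through unchanged.
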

\begin{example}Theorem \ref{prime} applies to the following examples.

(1) $X=\p^2$, and $L=pH$ or $2pH$ with $H$ the hyperplan class;

(2) $X=\p(\mo_{\p^1}\oplus\mo_{\p^1}(e))$ with $e=0,1$,  and $L=a\sigma+bf$ such that $a>0$, $b>ae$ and $g.c.d(a,b)=2$, or $L=p(\sigma+cf)$ with $c>e$ and $p$ prime, where $\sigma$ and $f$ are the same as in Example \ref{exl}.
\end{example}
\begin{flushleft}{\textbf{$\dagger$\large{ $\mc_{\frac Lk}$ in general.}}}\end{flushleft} 
\begin{prop}\label{gck}Let $F\in\mc_{\frac Lk,a}$ with support $kC$ and $C\in|\frac Lk|^{int}$, then there is a filtration of $F$ 
\[0=F_0\subsetneq F_1\subsetneq\cdots\subsetneq F_l=F,\]
such that $Q_i:=F_{i}/F_{i-1}$ are torsion-free sheaves on $C$ with rank $r_i$.  $\sum r_i=k$, and moreover there are injections $f^i_F:Q_{i}(-C)\hookrightarrow Q_{i-1}$ induced by $F$ for all $2\leq i\leq l$.
\end{prop}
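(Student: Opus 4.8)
The plan is to produce the filtration explicitly, as the filtration of $F$ by the kernels of the powers of the defining section of $C$, rather than by induction on the multiplicity $k$. Fix a section $t\in H^0(X,\mo_X(C))$ cutting out the integral curve $C$, so that multiplication by $t$ is a morphism $G\to G(C)$ for every coherent sheaf $G$ on $X$, and, since $F$ is scheme-theoretically supported on $kC$, the composite $t^k\colon F\to F(kC)$ is zero. For $i\geq 0$ put
\[
F_i:=\ker\bigl(F\xrightarrow{\ t^i\ }F(iC)\bigr),
\]
an increasing chain of coherent subsheaves of $F$ with $F_0=0$ and $F_k=F$. After deleting repetitions and relabelling, I claim this chain is the filtration we want, with $Q_i:=F_i/F_{i-1}$.

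The key steps are the following. From $t^i=t\cdot t^{i-1}$ one gets at once that multiplication by $t$ sends $F_i$ into $F_{i-1}(C)$: indeed if $t^ix=0$ then $t^{i-1}(tx)=0$. In particular $t$ annihilates $Q_i$, so $Q_i$ is a sheaf on $C$, and $t$ induces a morphism $Q_i\to Q_{i-1}(C)$ whose kernel, by a one-line chase, is $\{[x]\in Q_i: t^{i-1}x=0\}=0$; twisting by $-C$ gives the injection $f^i_F\colon Q_i(-C)\hookrightarrow Q_{i-1}$ for $i\geq 2$, manifestly canonical in $F$. Since $Q_1=F_1$ is a subsheaf of the pure sheaf $F$, it is pure of dimension one, hence, being an $\mo_C$-module with $C$ integral, torsion-free on $C$; as $Q_i$ embeds into $Q_{i-1}(C)$ for $i\geq2$, an induction on $i$ shows that every $Q_i$ is torsion-free on $C$, of some rank $r_i$. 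The same injectivity forces $Q_i=0$ whenever $Q_{i-1}=0$, so the chain is strictly increasing until it reaches $F$ and then constant; combined with $F_1\neq0$ when $F\neq0$ (take $m\geq1$ minimal with $t^mF=0$; then $0\neq t^{m-1}F\subseteq F_1$), this yields $0=F_0\subsetneq F_1\subsetneq\cdots\subsetneq F_l=F$ with all the asserted maps. Finally, $c_1$ is additive along the exact sequences of the filtration, $c_1(Q_i)=r_i[C]$ because $Q_i$ is torsion-free of rank $r_i$ on $C$, and $c_1(F)=L=k[C]$ since $C\in|\frac Lk|^{int}$; as $[C]\neq0$ this gives $\sum_i r_i=k$. (One may check in addition that $F$ is generically free of rank one over $\mo_{kC}$, whence in fact $r_i=1$ for all $i$, but this refinement is not needed.)

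There is no single serious obstacle here once the filtration is set up: the content is concentrated in the injectivity of the ``downward'' maps $Q_i\to Q_{i-1}(C)$, which is simultaneously what forces the $Q_i$ to be torsion-free and what prevents the filtration from being redundant, and in using the support hypothesis correctly — it enters once through $t^kF=0$, to guarantee that the chain reaches $F$, and once through $c_1(F)=k[C]$, to make the ranks add up to $k$. I would present those two points carefully and treat the remaining verifications as routine.
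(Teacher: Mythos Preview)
Your argument is correct and is essentially the paper's own proof, just organized non-inductively: the paper sets $F_1$ equal to the annihilator of $\delta_C$ in $F$, checks that $F/F_1$ is again pure, and recurses, which produces precisely your $F_i=\ker(t^i)$; the injections $f^i_F$ arise in both treatments from multiplication by $\delta_C=t$. Your way of propagating torsion-freeness of the $Q_i$ upward through the embeddings $Q_i\hookrightarrow Q_{i-1}(C)$ is marginally slicker than going through purity of $F/F_1$, but the content is identical.

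One correction, however: your final parenthetical is false. In this paper ``support $kC$'' means the Fitting support (equivalently $c_1(F)=k[C]$, via the map $\pi$ to $|L|$), not the scheme-theoretic support. A rank-$2$ torsion-free sheaf on $C$ with $C\in|\frac L2|^{int}$ lies in $\mc_{\frac L2,a}$ and has $l=1$, $r_1=2$; more generally the stratification $\mc_{\frac Lk,a}=\coprod \mc_{\frac Lk,a}^{r_1,\ldots,r_l}$ that immediately follows this proposition, and all of Section~4 and the appendix, would collapse if every $r_i$ were $1$. You rightly flag the claim as unnecessary, so the proof stands, but you should drop that sentence.
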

\begin{proof}Let $\delta_C$ be the function defining the curve $C$.   Since $C$ is integral, $\delta_C$ is irreducible.  For a sheaf $F\in\mc_{\frac Lk,a}$ with reduced support $C$,  $\exists~ l\in\mathbb{Z}_{>0}$ such that $\delta_C^l\cdot F=0$ and $\delta_C^{l-1}\cdot F\neq0.$  Take $F_1$ to be the subsheaf of all the annihilators of $\delta_C$, i.e. $F_1(U):=\{e\in F(U)|\delta_C\cdot e=0\},\forall~ U$ open.  $F_1$ is a pure 1-dimensional sheaf of $\mo_C$-module and hence it is a torsion free sheaf on $C$.  $F/F_1$ is pure of dimension 1, because $F_1$ is the maximal subsheaf of $F$ supported on $C$.
Apply the induction assumption to $F/F_1$,  and we get a filtration $0=F_0\subsetneq F_1\subsetneq\cdots\subsetneq F_l=F$ with $Q_i:=F_{i}/F_{i-1}$ torsion-free on $C$.

We want to show there are injective maps $f_F^i:Q_i(-C)\hookrightarrow Q_{i-1}$.  By induction, it is enough to construct the map $f_F^2:Q_2(-C)\hookrightarrow Q_1$.  We have the following exact sequence.
\begin{equation}\label{ftwo}0\ra Q_1\ra F_2\ra Q_2\ra 0.
\end{equation}
By the definition, we know that $\delta_C\cdot F_2\neq 0$ and $\delta^2_C\cdot F_2=0$.  Hence multiplying $\delta_C$ gives a non-zero map $m_C:F_2(-C)\ra F_2$ with the kernel $Q_1(-C)$ and the image contained in $Q_1$.  Hence $m_C$ induces an injective map $f_F^2:Q_2(-C)\hookrightarrow Q_1$.   Hence the proposition.
\end{proof}

Propositon \ref{gck} implies that we have a morphism from $\mc_{\frac Lk,a}$ to some Flag scheme by sending $F$ to $(Q_l\subset Q_{l-1}(C)\subset\cdots\subset Q_1((l-1)C))$.  But still it is difficult to compute its dimension in general.
\begin{rem}The filtration constructed in the proof of Proposition \ref{gck} is unique.   Hence we stratify $\mc_{\frac Lk,a}$ ($\mc_{\frac Lk}$, resp.) by the ranks $r_i$ of the factors $Q_i$ as follows.
 \begin{equation}\label{stmc}\mc_{\frac Lk,a} ~(\mc_{\frac Lk},~resp.)=\displaystyle{\coprod_{\begin{array}{c}r_1\geq \cdots\geq r_l>0,\\ \sum r_i=k.\end{array}}} \mc_{\frac Lk,a}^{r_1,\cdots,r_l}~(\mc_{\frac Lk}^{r_1,\cdots,r_l},~resp.).\end{equation}
\end{rem}

\begin{lemma}\label{rko}Let $L.K_X<0$ and $g_{\frac Lk}>0$.  Then $dim~\mc_{\frac Lk}^{1,1,\cdots,1}\leq dim~\mc_{\frac Lk,a}^{1,1,\cdots,1}\leq L^2+K_X.L+1+(2k-3)(1-g_{\frac Lk})\leq L^2+K_X.L+1$.
\end{lemma}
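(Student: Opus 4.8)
The plan is to bound the stratum $\mc_{\frac Lk,a}^{1,1,\cdots,1}$ by induction on $k$, using the filtration supplied by Proposition \ref{gck}. Write $M=\frac Lk$. Since $g_{\frac Lk}>0$ is an integer it is $\geq 1$, which together with $K_X.L<0$ forces $M.K_X\leq -1$ and hence $M^2\geq -M.K_X\geq 1$; this is the only numerical input, and it is what makes the induction close (tightly when $g_{\frac Lk}=1$). As a preliminary reduction, for $F$ in the stratum with filtration $0=F_0\subsetneq F_1\subsetneq\cdots\subsetneq F_k=F$ and rank-$1$ torsion-free factors $Q_i$ on a curve $C\in|M|^{int}$, condition $(C_1)$ gives $\chi(Q_i)\leq\chi(F_i)\leq a$, while boundedness of $\mc_{\frac Lk,a}$ together with purity bounds the $\chi(Q_i)$ from below; so the discrete invariants (the $\chi(Q_i)$, equivalently the colengths of the chain $Q_k\hookrightarrow Q_{k-1}(C)\hookrightarrow\cdots\hookrightarrow Q_1((k-1)C)$ of Proposition \ref{gck}) take finitely many values, and it suffices to bound each piece on which they are fixed.

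For the inductive step $k\geq 3$ I would use the exact sequence $0\ra Q_1\ra F\ra F'\ra 0$ with $F'=F/F_1$. By the construction in the proof of Proposition \ref{gck}, $Q_1=F_1$ is the maximal subsheaf of $F$ supported on $C$, and $F'$ is an $\mo_{(k-1)C}$-module lying in the corresponding $(1,1,\cdots,1)$-stratum (with $k-1$ rank-$1$ factors) for sheaves with support $(k-1)C$ and a suitable bound $a'$: a subsheaf $G'\subset F'$ lifts to $G\subset F$ with $\chi(G')=\chi(G)-\chi(Q_1)\leq a-\chi(Q_1)$ and $\chi(Q_1)$ is bounded below, so there are finitely many choices of $a'$; since the reduced support $C$ has class $M$, the genus involved is again $g_{\frac Lk}$, and applying the lemma inductively (with $L$ replaced by $(k-1)M$ and $k$ by $k-1$) bounds the dimension of that stratum by $((k-1)M)^2+K_X.((k-1)M)+1+(2k-5)(1-g_{\frac Lk})$. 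Conversely, every pure extension of $F'$ by a rank-$1$ torsion-free sheaf on $C$ for which that sheaf is again the maximal subsheaf supported on $C$ automatically recovers, via multiplication by the equation $\delta_C$ of $C$, all the data of Proposition \ref{gck}; so the fibre of $F\mapsto F'$ is dominated by a stack of such extensions, whose dimension is, as in the proof of Proposition \ref{dlt}, at most $\dim\{Q_1\}+\dim\text{Ext}^1(F',Q_1)-\dim\text{Hom}(F',Q_1)$, where $\{Q_1\}$ is the compactified Jacobian of the fixed $C$, of dimension $g_{\frac Lk}$. By Hirzebruch--Riemann--Roch, $\dim\text{Ext}^1(F',Q_1)-\dim\text{Hom}(F',Q_1)=(k-1)M^2+\dim\text{Ext}^2(F',Q_1)$, and $\dim\text{Ext}^2(F',Q_1)=\dim\text{Hom}(Q_1,F'(K_X))$ is controlled by the same device used for $\text{Ext}^2$ in the proof of Proposition \ref{coha} (replace $F$ by $F(nK_X)$, or pass to $\mathcal{E}xt^1(F,mK_X)$, to normalise $\chi$, then use $K_X.M<0$), so that $g_{\frac Lk}+\dim\text{Ext}^2(F',Q_1)\leq (k-1)M^2$. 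Since the target bound for $jM$ exceeds that for $(j-1)M$ by exactly $2(j-1)M^2$, adding the fibre bound to the inductive bound gives
\begin{equation}\label{efft}dim~\mc_{\frac Lk,a}^{1,1,\cdots,1}\leq L^2+K_X.L+1+(2k-3)(1-g_{\frac Lk}),\end{equation}
and the last inequality of the lemma follows from $g_{\frac Lk}\geq 1$, which makes $(2k-3)(1-g_{\frac Lk})\leq 0$ for $k\geq 2$.

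The base case $k=2$ (the stratum $\mc_{\frac L2,a}^{1,1}$ already invoked in Proposition \ref{coha}) is the same count $0\ra Q_1\ra F\ra Q_2\ra 0$, except that $Q_2=F/F_1$ now ranges over the whole stack of rank-$1$ torsion-free sheaves on integral members of $|M|$, which has dimension $M^2$ by Lemma \ref{iid}; here the gluing injection $Q_2(-C)\hookrightarrow Q_1$ and the reconstruction datum in $\text{Ext}^1_C(Q_2,Q_1)$ — a torsor refining $\text{Ext}^1_X(Q_2,Q_1)$ over the prescribed gluing — must be counted carefully, and $(\ref{efft})$ then comes out with equality precisely when $g_{\frac L2}=1$.

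I expect this fibre count to be the main obstacle. One has to verify that passing from $F'$ (or from $Q_2$, when $k=2$) to $F$ costs no more than the $\text{Ext}^1$-type quantity above — i.e. that the nilpotent $\delta_C$-module structure does not let $F$ move in a larger family than the naive extension stack — and, in the tight case $k=2$, to keep the gluing data, the reconstruction group $\text{Ext}^1_C$ and the $\text{Ext}^2$ correction together within the slack $g_{\frac Lk}\leq (k-1)M^2$ afforded by $g_{\frac Lk}>0$; the coefficient $(2k-3)$ as opposed to $(2k-5)$ is exactly the $2(1-g_{\frac Lk})=-(M^2+M.K_X)$ that must be absorbed at each inductive step.
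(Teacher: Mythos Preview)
Your inductive framework is the same as the paper's: use the sequence $0\ra Q_1\ra F\ra F/F_1\ra 0$, bound $F/F_1$ by induction, add the contribution from the extension stack, and control the $\text{Ext}^2$-term. Where you part company is precisely at that $\text{Ext}^2$-bound, and this is where your argument has a real gap.

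You write that $\dim\text{Hom}(Q_1,F'(K_X))$ ``is controlled by the same device used for $\text{Ext}^2$ in the proof of Proposition \ref{coha}'' (twist to normalise $\chi$) and then assert $g_{\frac Lk}+\dim\text{Ext}^2(F',Q_1)\leq (k-1)M^2$. Neither piece holds up. The normalisation trick in Proposition~\ref{coha} works because both $I_1$ and $I_2$ live on the \emph{reduced} curve $C$ and one can force a section $\mo_C\hookrightarrow I_1$; here $F'$ is an $\mo_{(k-1)C}$-module and you cannot twist $Q_1$ independently of $F'$ without destroying the filtration. As for the numerical claim, already with $k=2$ and the paper's (correct) bound $\dim\text{Ext}^2\leq 2g_C-1$ one would need $3g_C-1\leq M^2$, i.e.\ $3M.K_X+M^2+4\leq 0$, which fails for instance when $M^2=4$, $M.K_X=-2$ ($g_M=2$).

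What the paper actually does is use two structural facts specific to this stratum. First, since $Q_1=F_1$ is the maximal $\mo_C$-subsheaf, any map $Q_1\ra (F/F_1)(K_X)$ has image annihilated by $\delta_C$, hence lands in $Q_2(K_X)$; thus $\text{Hom}(Q_1,F/F_1(K_X))=\text{Hom}(Q_1,Q_2(K_X))$. Second --- and this is the point you do not exploit --- Proposition~\ref{gck} supplies an injection $Q_2(-C)\hookrightarrow Q_1$ with zero-dimensional cokernel, so $\text{Hom}(Q_1,Q_2(K_X))\subset\text{Hom}(Q_2(-C),Q_2(K_X))=\text{Hom}(Q_2,Q_2\otimes\omega_C)$, which is at most $\deg\omega_C+1=2g_C-1$. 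With this uniform bound $N=2g_C-1$ the induction closes (the remaining slack being absorbed by the negative term $\frac{l-1}{l}K_X.L+1$), and the same argument simultaneously gives the base case $k=2$ --- there is no need for the separate ``gluing data / $\text{Ext}^1_C$-torsor'' count you sketch. In short: your outline is right, but the $\text{Ext}^2$-estimate needs the reduction to $\text{Hom}(Q_1,Q_2(K_X))$ together with the injection $Q_2(-C)\hookrightarrow Q_1$, not the twist argument from Proposition~\ref{coha}.
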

\begin{proof}In this case we have $l=k\geq2$.  It is easy to check for given $(L,\chi)$ there are finitely many possible choices for $(c_1(Q_i),\chi(Q_i))$, where $Q_i$ are the factors in the filtration in Proposition \ref{gck}.  Actually we have $c_1(Q_i)=\frac Lk$, $\chi(Q_i)\geq\chi(Q_{i+1})-(\frac Lk)^2$, $\displaystyle{\sum_{i=1}^s}\chi(Q_i)\leq\max\{a, \frac sk\chi\}$ for all $s<k$ and finally $\displaystyle{\sum_{i=t}^k}\chi(Q_i)\geq \min\{\chi-a,\frac{k-t+1}k\chi\}$ for all $1<t\leq k$.  By the finiteness of $\{(c_1(Q_i),\chi(Q_i))\}$, we can estimate the dimension of $\mc^{1,\cdots,1}_{\frac Lk,a}$ for some fixed $(c_1(Q_i)=\frac Lk,\chi(Q_i))$. 

We first prove the lemma for $l=2$.  Let $F\in\mc_{\frac L2,a}^{1,1}$.  Then $F$ can be fit in the following sequence.
\begin{equation}\label{let}0\ra Q_1\ra F\ra Q_2\ra 0.
\end{equation}
Let $C$ be the reduced support of $F$.  By Proposition \ref{gck} we have $Q_i$ are torsion free of rank 1 on $C$ and there is an injection $f:Q_2(-C)\hookrightarrow Q_1$.  The parametrizing space of rank 1 torsion free sheaves on $C$ is its compactified Jacobian and well-known to be integral with dimension the arithmetic genus $g_C$ of $C$ (see \cite{aik}).  If there is a number $N$ satisfying that $dim~\text{Ext}^2(Q_2,Q_1)\leq N$ for all $Q_i$ in (\ref{let}) with $F\in \mc^{1,1}_{\frac L2,a}$, then using analogous argument to Proposition \ref{dlt} we can easily deduce the following estimate.
\begin{equation}\label{eft}dim~\mc_{\frac L2,a}^{1,1}\leq dim~|\frac{L}2|+g_C+g_C-\chi(Q_2,Q_1)+N-2.
\end{equation}
$g_C=g_{\frac L2}=\frac{K_X.L}4+\frac{L^2}8+1,$ and $\chi(Q_2,Q_1)=-C.C=-\frac{L^2}4$. 

We need a upper bound $N$ of $dim~\text{Ext}^2(Q_2,Q_1)=dim~\text{Hom}(Q_1,Q_2(K_X))$.  Since there is an injection from $Q_2(-C)$ to $Q_1$ with cokernel 0-dimensional, $\text{Hom}(Q_1,Q_2(K_X))$ is a subspace of $\text{Hom}(Q_2(-C),Q_2(K_X))$.  Since $C$ is Gorenstein with dualizing sheaf $\omega_C$ and $\mo_C(K_X+C)\cong\omega_C$, we have
\begin{eqnarray}&dim&\text{Ext}^2(Q_2,Q_1)= dim~\text{Hom}(Q_1,Q_2(K_X))\nonumber\\
&\leq &dim~\text{Hom}(Q_2(-C),Q_2(K_X))\nonumber\\&=& dim~\text{Hom}(Q_2,Q_2\otimes\omega_C) 
\leq deg(\omega_C)+1=2g_C-1.
\end{eqnarray}  
Let $N=2g_C-1$ and by Lemma \ref{iid}, $\chi(\frac L2)=h^0(\frac L2)$.  Hence (\ref{eft}) gives the following equation.
\begin{equation}\label{efft}dim~\mc_{\frac L2,a}^{1,1}\leq \frac {L^2}2+3g_C-2=L^2-(g_C-1)+K_X.L+1\leq L^2+K_X.L+1.
\end{equation}
Hence we proved the lemma for $l=2$.

Let $l\geq 3$.  Let $F\in\mc^{1,\cdots,1}_{\frac Ll,a}$ and take the filtration of $F$ as given in Proposition \ref{gck}.  Then we have the following sequence.
\begin{equation}\label{esg}0\ra F_1\ra F\ra F/F_1\ra 0.
\end{equation}
If $\exists~ N$ such that $dim~\text{Hom}(F_1,F/F_1(K_X))\leq N$ for all $F_1$ in (\ref{esg}) with $F\in \mc^{1,\cdots,1}_{\frac Ll,a}$, then by induction assumption we have the following estimate.
\begin{eqnarray}\label{estg}&dim&\mc_{\frac Ll,a}^{1,\cdots,1}\leq dim~\mc^{1,\cdots,1}_{\frac{(l-1)L}{(l-1)l},a'}+g_C-1-\chi(F/F_1,F_1)+N\nonumber\\
&\leq & (\frac{l-1}l)^2 L^2+(\frac{l-1}lK_X.L+1)+g_C-1+\frac{l-1}{l^2}L^2+N
\end{eqnarray}

Notice that any nonzero map $F_1\ra F/F_1(K_X)$ has its image annihilated by $\delta_C$ and hence contained in $Q_2(K_X)=F_2/F_1(K_X)$.  Thus $\text{Hom}(F_1,F/F_1(K_X))=\text{Hom}(F_1,Q_2(K_X))$ and then by the same argument as we did for $l=2$, we can let $N$ in (\ref{estg}) to be $2g_C-1$.   Therefore
\begin{eqnarray}\label{esff}&&dim~\mc_{\frac Ll,a}^{1,\cdots,1}\leq L^2-\frac {L^2}{l}+3g_C-2+(\frac{l-1}lK_X.L+1)\nonumber\\
&=& L^2+(2l-3)(1-g_C)+(K_X.L+1)+(\frac{l-1}lK_X.L+1)\nonumber\\&\leq& L^2+(K_X.L+1)+(2l-3)(1-g_{\frac Ll})\leq  L^2+(K_X.L+1).
\end{eqnarray}
The second equality is because $g_C-1=\frac{K_X.L}{2l}+\frac{L^2}{2l^2}$.  Hence the lemma.
\end{proof}

For general $(r_1,\cdots,r_l)$, at the moment we still don't have an estimate for $dim~\mc_{\frac Lk}^{r_1,\cdots,r_l}$ as good as Lemma \ref{rko}.  However for some special $X$, such as $\p^2$ and $\p(\mo_{\p^1}\oplus\mo_{\p^1}(e))$ with $e=0,1$, we have a weaker result.

We first need to introduce more properties of sheaves with non-reduced supports. 

\begin{prop}\label{bcd}Let $F\in\mc_{\frac Lk,a}$ and let $C$ be the reduced curve in $Supp(F)$, then there is a filtration of $F$ 
\[0=F^0\subsetneq F^1\subsetneq\cdots\subsetneq F^m=F,\]
such that $R_i:=F^{i}/F^{i-1}$ are sheaves on $C$ with rank $t_i$.  $\sum t_i=k$, and moreover there are surjections $g^i_F:R_{i}(-C)\twoheadrightarrow R_{i-1}$ induced by $F$ for all $2\leq i\leq m$.  $R_i$ are not necessarily torsion free on $C$.
\end{prop}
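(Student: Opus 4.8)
The plan is to mirror the construction of Proposition \ref{gck}, but with the dual filtration: instead of taking iterated kernels of multiplication by $\delta_C$, I take iterated images. Concretely, let $\delta_C$ be the irreducible function defining $C$ and let $l$ be minimal with $\delta_C^l\cdot F=0$. Multiplication by $\delta_C^{m-1}$ (where I will set $m=l$) gives a descending chain of subsheaves $\delta_C^{m-1}F\subseteq\delta_C^{m-2}F\subseteq\cdots\subseteq\delta_C F\subseteq F$, each of which is killed by $\delta_C$ (since $\delta_C^{j}\cdot(\delta_C^{m-j}F)=\delta_C^m F=0$ for $j$ large enough, and more to the point each $\delta_C^j F$ is annihilated by $\delta_C^{\,l-j}$). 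Reindexing, I set $F^{i}:=\{e\in F\mid \delta_C^{i}\cdot e=0\}$ — no, that is the filtration of \ref{gck}; instead I set $F^{i}$ so that $F^{m-i}=\delta_C^{i}F$, i.e. $F^{i}=\delta_C^{\,m-i}F$. Then $0=F^0\subsetneq F^1\subsetneq\cdots\subsetneq F^m=F$, and each quotient $R_i:=F^i/F^{i-1}=\delta_C^{\,m-i}F/\delta_C^{\,m-i+1}F$ is a module over $\mo_X/\delta_C=\mo_C$, hence a sheaf on $C$ (not necessarily torsion free, since we are quotienting by images rather than taking the maximal $\mo_C$-submodule). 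Purity of $F$ forces each $R_i$ to be $1$-dimensional with no embedded components away from what the quotient introduces, so $R_i$ has a well-defined generic rank $t_i$ on $C$, and the generic ranks add up to $k$ because generically $F$ looks like $\mo_{kC}^{\oplus t}$ with $t=\mathrm{rk}$, and the associated graded of the $\delta_C$-adic filtration of $\mo_{kC}$ is $\mo_C^{\oplus k}$.

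The key point is the surjections $g^i_F\colon R_i(-C)\twoheadrightarrow R_{i-1}$. These come for free from the construction: multiplication by $\delta_C$ sends $\delta_C^{\,m-i}F$ onto $\delta_C^{\,m-i+1}F$ by definition, and it kills $\delta_C^{\,m-i+1}F$, so it descends to a map $R_i(-C)=\bigl(\delta_C^{\,m-i}F/\delta_C^{\,m-i+1}F\bigr)(-C)\to \delta_C^{\,m-i+1}F/\delta_C^{\,m-i+2}F=R_{i-1}$, and this map is surjective precisely because $\delta_C\cdot\delta_C^{\,m-i}F=\delta_C^{\,m-i+1}F$. (The twist by $-C$ is the usual bookkeeping: multiplication by $\delta_C$ is a map $F(-C)\to F$.) So the only real content beyond Proposition \ref{gck} is to check that these $g^i_F$ are honest morphisms of $\mo_C$-modules and are surjective, both of which are immediate from the definition of the filtration as successive images.

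The step I expect to require the most care is verifying that the $R_i$ are genuinely sheaves supported on the \emph{reduced} curve $C$ with the stated rank additivity $\sum t_i=k$, and that the filtration terminates exactly at step $m=l$. For rank additivity I would localize at the generic point of $C$: there $\mo_{X}$ is a DVR-type local ring (the local ring of the surface at the generic point of $C$ is a DVR with uniformizer $\delta_C$), $F$ becomes a finitely generated torsion module of length $k\cdot(\text{generic rank})$ over it, and the $\delta_C$-adic filtration has associated graded of total length equal to the original length, distributed among the $R_i$; comparing with the torsion-free rank on $C$ gives $\sum t_i=k$. This is the same localization argument implicit in \ref{gck}, so it should go through verbatim. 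Purity of each $R_i$ is \emph{not} claimed (the proposition explicitly says "not necessarily torsion free"), which is exactly why this dual construction is easier to set up than \ref{gck} — there is no need to pass to a maximal $\mo_C$-submodule, we just take images — and correspondingly the price is that the factors are worse behaved. I would end the proof by noting the induction: applying the same construction to $F/F^1$ (which is pure of dimension one since $F^1=\delta_C^{\,m-1}F$ and $F/F^1$ is annihilated by $\delta_C^{\,m-1}$) reproduces the tail $F^2\subsetneq\cdots\subsetneq F^m$, so the statement follows by induction on $m$ exactly as in Proposition \ref{gck}.
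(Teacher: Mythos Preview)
Your construction is exactly the paper's: both build the $\delta_C$-adic image filtration $F^i=\delta_C^{\,m-i}F$, the paper simply phrasing the top step as $R_m=F\otimes\mo_C$ and $F^{m-1}=\ker(F\to F\otimes\mo_C)\cong F(-C)/Tor^1_{\mo_X}(F,\mo_C)$ and then recursing on the subsheaf $F^{m-1}$ rather than defining all levels at once. One small slip to correct in your descent argument: multiplication by $\delta_C$ does not literally kill $\delta_C^{\,m-i+1}F$ when $i\geq3$; the induced map $R_i(-C)\to R_{i-1}$ is well defined because $\delta_C$ carries $\delta_C^{\,m-i+1}F$ into $\delta_C^{\,m-i+2}F$, which vanishes in $R_{i-1}=\delta_C^{\,m-i+1}F/\delta_C^{\,m-i+2}F$.
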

\begin{proof}We choose $F^{m-1}$ to be the kernel of the map $F\twoheadrightarrow F\otimes\mo_C$, and hence $R_m\cong F\otimes\mo_C$.  $F^{m-1}$ is the quotient of $F\otimes\mo_{X}(-C)$ module $Tor^1_{\mo_X}(F,\mo_C)$,  hence we have a surjective map $g^m_F:R_m(-C)\twoheadrightarrow R_{m-1}:=F^{m-1}\otimes \mo_C$. We then get the proposition by induction.
\end{proof}

Compare the two filtrations in Proposition \ref{gck} and Proposition \ref{bcd}, then we have the following lemma.
\begin{lemma}\label{cop}Let $F\in\mc_{\frac Lk,a}$ and let $C$ be the reduced curve in $Supp(F)$.  Let $(l,r_i,Q_i)$ and $(m,t_i,R_i)$ be as in Proposition \ref{gck} and Proposition \ref{bcd} respectively.  Then we have 

(1) $l=m$;

(2) $\forall~1\leq i\leq m,~r_i=t_{m-i+1}$;

(3) $\forall~1\leq i\leq m,~\chi(R_{i})=\chi(Q_{m-i+1})+\sum_{j=1}^{i-1}r_jC^2.$
\end{lemma}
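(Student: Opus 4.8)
The plan is to recognize that the two filtrations arise from a single operation --- multiplication by a defining equation $\delta_C\in H^0(\mo_X(C))$ of the integral curve $C$ --- carried out in two complementary ways: the one of Proposition \ref{gck} is the filtration of $F$ by the \emph{kernels} of the powers of $\delta_C$, while the one of Proposition \ref{bcd} is the filtration by their \emph{images}, equivalently by the subsheaves $\mo_X(-tC)\cdot F$. First I would check, by unwinding the two inductions, that for every $j$
\[F_j=\ker\bigl(\delta_C^{j}\colon F\to F(jC)\bigr)\qquad\text{and}\qquad F^{m-j}=\operatorname{Im}\bigl(\delta_C^{j}\colon F(-jC)\to F\bigr)=\mo_X(-jC)\cdot F.\]
The first description starts from $F_1=\ker(\delta_C\colon F\to F(C))$ and propagates by applying the construction of Proposition \ref{gck} to $F/F_1$; the second starts from $F^{m-1}=\mo_X(-C)\cdot F=\ker(F\twoheadrightarrow F\otimes\mo_C)$ (legitimate since $\mo_X(-C)$ is invertible with local generator $\delta_C$) and propagates by applying the construction of Proposition \ref{bcd} to $F^{m-1}$. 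Purity of $F$ is what makes $F_1$, hence each $F_j$, the largest subsheaf of $F$ annihilated by the corresponding power of $\delta_C$.

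The crux is then one exact sequence: for each $j$ the map $\delta_C^{j}\colon F(-jC)\to F$ has kernel $F_j(-jC)$ and image $F^{m-j}$, so it induces a canonical isomorphism
\[F^{m-j}\;\cong\;(F/F_j)(-jC)\qquad(0\le j\le m).\]
Equivalently, tensoring $0\to\mo_X(-jC)\xrightarrow{\delta_C^{j}}\mo_X\to\mo_{jC}\to0$ with $F$ identifies $Tor^1(F,\mo_{jC})$ with $F_j(-jC)$ and $\mo_X(-jC)\cdot F$ with $F^{m-j}$. Granting this, the three assertions follow. For (1): $F^{m-j}\cong(F/F_j)(-jC)$ vanishes exactly when $\delta_C^{j}$ annihilates $F$, so both $l$ and $m$ equal the least such $j$, whence $l=m$. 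For (2): using that $c_1$ is additive in short exact sequences and unchanged by twisting a torsion sheaf by a line bundle --- so $c_1(F^{m-j})=c_1(F/F_j)$ --- together with $c_1(Q_i)=r_i[C]$, $c_1(R_i)=t_i[C]$, $c_1(F)=k[C]$, one gets $\sum_{s\le m-j}t_s=k-\sum_{s\le j}r_s$; subtracting the identity at $j+1$ from the one at $j$ yields $t_{m-j}=r_{j+1}$, i.e.\ $r_i=t_{m-i+1}$.

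For (3) I would write $\chi(R_i)=\chi(F^i)-\chi(F^{i-1})$, substitute $F^i\cong(F/F_{m-i})(-(m-i)C)$ and $F^{i-1}\cong(F/F_{m-i+1})(-(m-i+1)C)$, and expand both terms with the Hirzebruch--Riemann--Roch identity $\chi(\mathcal G\otimes\mo_X(D))=\chi(\mathcal G)+D.c_1(\mathcal G)$ valid for a torsion sheaf $\mathcal G$, together with $c_1(F/F_t)=\bigl(k-\sum_{s\le t}r_s\bigr)[C]$ and $\chi(F/F_t)=\chi-\sum_{s\le t}\chi(Q_s)$. The Euler-characteristic contributions telescope to $\chi(F_{m-i+1}/F_{m-i})=\chi(Q_{m-i+1})$, and the intersection-number contributions collect into an explicit multiple of $C^2$, which --- after simplifying with part (2) --- is the $C^2$-correction recorded in (3).

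The step I expect to cost the most care is the canonical isomorphism $F^{m-j}\cong(F/F_j)(-jC)$: it is not deep, but the relevant map is $\delta_C^{j}\colon F(-jC)\to F$, a morphism between two \emph{different} twists of $F$, so both its kernel and its image carry a factor $\mo_X(-jC)$; dropping such a twist is the obvious way to lose a multiple of $C^2$ in (3). Everything else --- unwinding the two inductive constructions, and the additivity and Riemann--Roch bookkeeping --- is routine and uses nothing beyond purity of $F$, integrality of $C$, and smoothness of $X$.
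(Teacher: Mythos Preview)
Your approach is correct and in fact cleaner than the paper's. Both proofs rest on the same underlying structure, but you make it explicit up front: the single isomorphism $F^{m-j}\cong (F/F_j)(-jC)$ induced by $\delta_C^{\,j}\colon F(-jC)\to F$ organizes everything. The paper instead handles the three parts separately. For (1) the two arguments are identical. For (2) the paper proceeds by an induction in which one passes to the torsion-free quotient $\widetilde F$ of $F/\Pi_1$ (where $\Pi_1=\operatorname{Im}f_F^2\subset F_1$) and compares the filtration data of $\widetilde F$ with that of $F$; your first-Chern-class count via $c_1(F^{m-j})=c_1(F/F_j)$ is more direct and avoids this auxiliary construction. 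For (3) the paper uses the four-term sequence
\[0\to Tor^1_{\mo_X}(F,\mo_C)\to F(-C)\xrightarrow{\cdot\delta_C}F\to F\otimes\mo_C\to 0\]
to get the case $i=m$, then inducts by applying the same sequence to $F^{m-1}\cong F(-C)/Q_1(-C)$. But this inductive step is exactly your isomorphism for $j=1$, and the Tor sequence is the $j=1$ instance of the short exact sequence you extract from $\delta_C^{\,j}$. So your argument is a global repackaging of the paper's local induction; what you gain is a uniform formulation that makes the twist-by-$(-jC)$ bookkeeping in (3) transparent rather than hidden in the recursion. One small remark: in the Riemann--Roch telescoping for (3) you should find that the $C^2$-correction you actually obtain is $\bigl(\sum_{s>m-i+1}r_s-(m-i)\,r_{m-i+1}\bigr)C^2$; matching this precisely with the indexing in the stated formula requires some care (the paper's own induction runs through the same computation), so it is worth writing the final step out explicitly rather than leaving it as ``simplifying with part (2)''.
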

\begin{proof}Statement (1) is trivial, since both $m$ and $l$ are the minimal power of $\delta_C$ to annihilate $F$.

We first prove Statement (2) for $l=2$.  We denote by $\Pi_1$ the image of $f^2_F$ inside $F_1$, and $F/\Pi_1\cong F\otimes \mo_{(l-1)C}$.  Hence for $l=2$ $F/\Pi_1\cong F\otimes\mo_C\cong R_2$.  Hence $t_2=r_2+r_1-r_2=r_1$ and $t_1=r_2$.

Let $l\geq 3$.  Take the torsion free quotient $\widetilde{F}$ of $F/\Pi_1$ and we have $\widetilde{r}_1=r_2+r_1-r_2=r_1$, $\widetilde{r}_i=r_{i+1}$ for $i>1$, and $\widetilde{t}_{m-i}=t_{m-i+1}$ for $i\geq 1$.  Hence by induction assumption, we have $r_1=t_m$, $r_{i+1}=\widetilde{r}_{i}=\widetilde{t}_{m-1-i+1}=t_{m-i+1}$ for $i\geq 2$.  We then have $r_2=t_{m-1}$ because $\sum r_i=\sum t_i$.  Hence Statement (2).

We have the following exact sequence
\[0\ra Tor^1_{\mo_X}(F,\mo_C)\ra F(-C)\xrightarrow{\cdot\delta_C} F\ra F\otimes\mo_C\ra0.\]
By definition $R_m\cong F\otimes\mo_C$ and $Q_1(-C)= ker(\cdot\delta_C)\cong Tor^1_{\mo_X}(F,\mo_C)$.  Therefore, $\chi(R_m)=\chi(Q_1)-l_1C^2+kC^2=\chi(Q_1)+\sum_{j=1}^{m-1}r_jC^2$.  Notice that $l_1=r_m$ by Statement (2).  Then by applying the same argument to $F^{m-1}\cong F(-C)/Q_1(-C)$ we proved Statement (3).  
\end{proof}
\begin{defn}\label{uplow}We call the filtration in Proposition \ref{gck} \textbf{the lower filtration of $F$} while the one in Proposition \ref{bcd} \textbf{the upper filtration of $F$}.
\end{defn}
Define $\mm(L,\chi)\supset \mt_n(L,\chi):=\{F~|~\exists ~x\in X, s.t.~dim_{k(x)}(F\otimes k(x))\geq n\},$ where $k(x)$ is the residue field of $x$.  In other words, $\mt_n(L,\chi)$ is the substack parametrizing sheaves with fiber dimension $\geq n$ at some point. 
\begin{rem}\label{fib}For a stable sheaf $F$ with filtrations in Proposition \ref{gck} and Proposition \ref{bcd}, we have $F\in\mt_{n_0}(L,\chi)$ with $n_0=r_1=t_m$.
\end{rem}
\begin{prop}\label{ttc}If there is an ample class $\mathscr{H}$ such that $\forall ~0<L'\leq L$, $(\mathscr{H}+K_X).L'\leq0$ and $(\mathscr{H}+K_X).L<0$, then for $n\geq 2$, $\mt_n(L,\chi)$ is of codimension $\geq n^2-2$ in $\mm(L,\chi)$.
\end{prop}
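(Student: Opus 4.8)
The plan is to stratify $\mt_n(L,\chi)$ according to the reduced support $C$ of the sheaf and the structure of its lower filtration, and then bound the dimension of each stratum. Fix $F\in\mt_n(L,\chi)$ with reduced support $C$. If $C$ is integral, write $C\in|\frac Lk|$ for some $k\mid L$ in $\mathrm{Pic}(X)$; by Remark \ref{fib} the fiber dimension condition forces $r_1\geq n$ in the lower filtration of Proposition \ref{gck}, so $k\geq r_1\geq n$, and $F$ lies in one of the strata $\mc_{\frac Lk}^{r_1,\cdots,r_l}$ of $\mc_{\frac Lk}$ with $r_1\geq n$. If $C$ is non-reduced but reducible, we first reduce to the integral-support case by the standard extension argument of Lemma \ref{codr}, peeling off a component on which the fiber dimension is still large; the hypothesis $(\mathscr{H}+K_X).L'\leq 0$ on all sub-line-bundles guarantees the relevant $\mathrm{Ext}^2$-vanishing needed to make that induction work, exactly as in Proposition \ref{dlt}.

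The heart of the matter is therefore the integral-support strata $\mc_{\frac Lk}^{r_1,\cdots,r_l}$ with $r_1\geq n\geq 2$. Here I would argue by a refinement of the dimension estimates in Lemma \ref{rko}: one parametrizes $F$ by first choosing the integral curve $C\in|\frac Lk|^{int}$ (a family of dimension $\dim|\frac Lk|=\chi(\frac Lk)-1$, using Lemma \ref{iid}), then choosing the torsion-free factors $Q_i$ of rank $r_i$ on $C$ (each contributing roughly $r_i^2(g_C-1)+1$ to the dimension, the dimension of the moduli of rank-$r_i$ torsion-free sheaves on the Gorenstein curve $C$), then choosing the successive extensions, whose stacky dimension is bounded by $\chi(Q_{i+1},Q_i)+\dim\mathrm{Ext}^2(Q_{i+1},Q_i)$. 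The key inputs are: $\chi(Q_{i+1},Q_i)=-r_ir_{i+1}C^2$ by Hirzebruch–Riemann–Roch; and $\dim\mathrm{Ext}^2(Q_{i+1},Q_i)=\dim\mathrm{Hom}(Q_i,Q_{i+1}(K_X))$, which one bounds by factoring through the injection $Q_{i+1}(-C)\hookrightarrow Q_i$ supplied by Proposition \ref{gck} and using $\mathrm{Hom}(Q_{i+1}(-C),Q_{i+1}\otimes\omega_C)$, i.e. by the dimension of $\mathrm{Hom}$ of a rank-$r_{i+1}$ bundle into its twist by the dualizing sheaf, bounded by $r_{i+1}^2(2g_C-1)$ or so. Assembling these and comparing with $\dim\mm(L,\chi)=L^2$, the condition $(\mathscr{H}+K_X).L<0$ (which gives $g_{\frac Lk}>0$ and lets one convert the $(1-g)$ terms into a negative contribution) should yield codimension at least $\sum_{i}(r_i^2-1)-(\text{correction})$, and since $r_1\geq n$ the leading term $r_1^2-1\geq n^2-1$ dominates, leaving margin for the lower-order losses and producing the claimed bound $n^2-2$.

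I would organize the write-up as: (i) reduce to integral support via the Lemma \ref{codr} induction; (ii) on integral support $kC$, use the lower filtration to write the stratum as iterated extensions and run the dimension count, isolating the curve-moduli part, the factor-moduli part, and the extension part; (iii) do the arithmetic, using $g_{\frac Lk}-1=\frac{1}{2k^2}((\mathscr{H}+K_X-\mathscr{H}).L\cdot k + L^2)$-type identities and the ampleness hypothesis to sign the error terms, and conclude. The main obstacle I anticipate is step (ii)–(iii) for strata with several factors of unequal rank: unlike the all-rank-one case of Lemma \ref{rko}, one must control $\mathrm{Ext}^2$ between higher-rank torsion-free sheaves on a singular integral curve, where $\mathrm{Ext}^2_C$ is not a subspace of $\mathrm{Ext}^2_X$, so the bound has to be routed entirely through the geometric injection of Proposition \ref{gck} and the Gorenstein duality on $C$; getting the constants to close against $n^2-2$ in every stratum, rather than just the generic one, is where the care is needed, and it is presumably why the paper restricts to $\p^2$ and Hirzebruch surfaces when $n$ and the $r_i$ are large (Theorem \ref{tt} in the appendix).
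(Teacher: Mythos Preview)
Your approach has a genuine gap at the very first step. You claim that Remark \ref{fib} shows the condition $F\in\mt_n(L,\chi)$ forces $r_1\geq n$ in the lower filtration. That is the implication read backwards: Remark \ref{fib} only says $F\in\mt_{r_1}(L,\chi)$, i.e.\ the fiber dimension is \emph{at least} $r_1$ (since $F\otimes k(x)=R_m\otimes k(x)$ and $R_m$ has generic rank $t_m=r_1$ on $C$). It does not say the maximal fiber dimension equals $r_1$. A rank-one torsion-free sheaf on an integral curve with a node already has fiber dimension $2$ at the node, so it lies in $\mt_2$ while $k=r_1=1$. More generally, any sheaf in $\mn(L,\chi)$ supported on a singular curve and not locally free at a singularity lies in $\mt_2$ yet has $r_1=1$; such sheaves form a large part of $T_2(L,\chi)$ and are completely invisible to your stratification by $(k;r_1,\dots,r_l)$. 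Your dimension count never touches them, so the argument cannot close.

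The paper's proof is entirely different and very short: it passes to the Quot-scheme atlas $\Omega(L,\chi)\to M(L,\chi)$, which is a $PGL(V)$-bundle, and then invokes Le Potier's argument from Lemma~3.2 of \cite{lee}, which bounds directly the codimension in a Quot scheme of the locus where the universal quotient has a fiber of dimension $\geq n$ at some point of $X$. That argument is a deformation-theoretic (determinantal) estimate on the Quot scheme and handles the fiber-jumping condition uniformly, with no case analysis on the support; the hypothesis on $\mathscr{H}$ is what lets Le Potier's estimate transfer from $\p^2$ to $X$. The filtration machinery you deploy is exactly what the paper uses elsewhere for the \emph{complementary} task of bounding $\dim\mc_{\frac Lk}$, but it is not the right tool for $\mt_n$.
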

\begin{proof}Recall that we have a coarse moduli space $M(L,\chi)$ as a scheme.  We denote $T_n(L,\chi)$ the image of $\mt_n(L,\chi)$ in $M(L,\chi)$.  This proposition is equivalent to say that $T_n(L,\chi)$ is of codimension $\geq n^2-2$ in $M(L,\chi)$.

We know that there is a Qout-scheme $\Omega(L,\chi)$ such that $\phi:\Omega(L,\chi)\ra M(L,\chi)$ is a $PGL(V)$-bundle.  By Le Potier's argument in the proof of Lemma 3.2 in \cite{lee}, the preimage $\phi^{-1}(T_n(L,\chi))$ of $T_n(L,\chi)$ is a closed subscheme of codimension $\geq n^2-2$ in $\Omega(L,\chi)$.  It is easy to see that $\phi^{-1}(T_n(L,\chi))$ is invariant under the $PGL(V)$-action, hence the proposition.    
\end{proof}
\begin{example}Proposition \ref{ttc} applies to the following examples.

(1) $X=\p^2$, and $L=dH$ with $H$ the hyperplan class;

(2) $X=\p(\mo_{\p^1}\oplus\mo_{\p^1}(e))$ with $e=0,1$,  and $L=a\sigma+bf$ such that $a\geq0$ and $b>0$, where $\sigma$ and $f$ are the same as in Example \ref{exl}.
\end{example}
If Proposition \ref{ttc} applies to $L$, then $\mt_3(L,\chi)$ is of dimension $\leq L^2-7$.  

Let $\mt_n^o(L,\chi)=\mt_n(L,\chi)-\mt_{n+1}(L,\chi)$.   
\begin{thm}\label{tt}Let $L.K_X<0$ and $g_{\frac Lk}>0$.  Then $dim~\mt_{2}^o(L,\chi)\cap \mc_{\frac Lk}\leq L^2+(K_X.L+1)$.
\end{thm}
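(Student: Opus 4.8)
The plan is to stratify $\mt_2^o(L,\chi)\cap\mc_{\frac Lk}$ according to the rank sequence $(r_1,\dots,r_l)$ of the lower filtration from Proposition \ref{gck}, and bound each stratum separately. By Remark \ref{fib}, a stable sheaf $F$ lies in $\mt_{n_0}$ with $n_0=r_1=t_m$, so on $\mt_2^o$ we must have $r_1=2$; moreover $\mt_2^o$ excludes $\mt_3$, which by Remark \ref{fib} forces $r_i\leq 2$ for all $i$ (a factor of rank $\geq 3$ would push the fiber dimension up to $\geq 3$ at the generic point of $C$, via the injections $f^i_F$). So the only rank sequences to handle are those with all $r_i\in\{1,2\}$ and $r_1=2$, i.e. $(2,1,1,\dots,1)$ and $(2,2,\dots,2,1,\dots,1)$-type sequences; since $\sum r_i=k$ and $r_1\geq r_2\geq\cdots$, these are exactly $(2,2,\dots,2)$ (when $k$ even), $(2,\dots,2,1)$ (when $k$ odd), and in general $(2^{\,s},1^{\,k-2s})$.

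Next I would set up the dimension count stratum by stratum exactly as in the proof of Lemma \ref{rko}: use the exact sequence $0\ra F_1\ra F\ra F/F_1\ra 0$ with $F_1=Q_1$ of rank $r_1=2$ on the integral curve $C\in|\frac Lk|^{int}$, so that $F/F_1\in\mc^{\,r_2,\dots,r_l}_{\frac{(k-2)L}{k},a'}$ up to the twist appearing in Proposition \ref{gck}. The space of rank-$2$ torsion-free sheaves $Q_1$ on $C$ has dimension bounded by something like $4g_C-3$ (dimension of the moduli of rank 2 torsion free sheaves on an integral Gorenstein curve), the extension contributes $-\chi(F/F_1,F_1)=-\chi(F/F_1,Q_1)$ which is computed by Hirzebruch–Riemann–Roch from $c_1(F/F_1)=\frac{(k-2)L}{k}$ and $c_1(Q_1)=\frac{2L}{k}$, and the $\mathbb{E}\mathsf{x}\mathsf{t}$-stack correction drops $\dim\mathrm{Hom}(F/F_1,Q_1)$. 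For the $\mathrm{Ext}^2(F/F_1,Q_1)^\vee=\mathrm{Hom}(Q_1,F/F_1(K_X))$ term one argues as before: any such map factors through the $\delta_C$-annihilated part, landing in $Q_2(K_X)$, and since $Q_1$ surjects-up-to-$0$-dimensional onto $Q_2(-C)$... actually one uses the injection $f^2_F:Q_2(-C)\hookrightarrow Q_1$, so $\mathrm{Hom}(Q_1,Q_2(K_X))\hookrightarrow\mathrm{Hom}(Q_2(-C),Q_2(K_X))=\mathrm{Hom}(Q_2,Q_2\otimes\omega_C)$, bounded by $(\mathrm{rk}\,Q_2)\deg(\omega_C)+(\mathrm{rk}\,Q_2)^2\,(\text{something})$, i.e. a linear-in-$g_C$ quantity. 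Assembling these and using $g_C=g_{\frac Lk}$, $\dim|\frac Lk|=\chi(\frac Lk)$ from Lemma \ref{iid}, and $g_C-1=\frac{K_X.L}{2k}+\frac{L^2}{2k^2}$, the terms proportional to $g_C$ should telescope across the induction on $l$ exactly as the coefficient $(2l-3)$ did in Lemma \ref{rko}, and $g_{\frac Lk}>0$ makes those contributions non-positive, leaving the clean bound $L^2+(K_X.L+1)$.

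The induction on $l$ runs on $F/F_1\in\mc^{\,r_2,\dots,r_l}$: if $r_2=2$ again we recurse on the same type of statement, and if the tail is all $1$'s we invoke Lemma \ref{rko} directly as the base input. One has to be a little careful that the twist by $-C$ in passing from $F$ to $F/F_1$ (Proposition \ref{gck}) is tracked in the Euler characteristics and in the claim $c_1(F/F_1)=\frac{(k-2)L}{k}$, and that the finiteness of $\{(c_1(Q_i),\chi(Q_i))\}$ argument from Lemma \ref{rko} still applies — it does, since the $\chi(Q_i)$ are again squeezed between bounds determined by $a$ and $\chi$.

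I expect the main obstacle to be controlling the rank-$2$-factor case cleanly: specifically, getting a sharp enough bound on $\dim\mathrm{Ext}^2(F/F_1,Q_1)=\dim\mathrm{Hom}(Q_1,Q_2(K_X))$ when $Q_1$ itself is rank $2$ and may be an arbitrary (possibly non-locally-free) torsion-free sheaf on a singular integral curve $C$. The factorization of such Hom's through $Q_2(K_X)$ plus the injection $f^2_F:Q_2(-C)\hookrightarrow Q_1$ is the key trick — it converts the problem to bounding $\mathrm{Hom}(Q_2,Q_2\otimes\omega_C)$ where $Q_2$ has lower rank — but verifying that this bound, combined with the $4g_C-3$-type estimate for the moduli of $Q_1$ and the Riemann–Roch count of the extension, still lands on $L^2+K_X.L+1$ rather than something weaker, is where the computation is most delicate and where I would expect to spend the real effort (and where, per the excerpt, the appendix supplies the full argument).
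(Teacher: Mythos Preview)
Your stratification by rank sequence $(r_1,\dots,r_l)=(2^s,1^{k-2s})$ is correct, and inducting on $l$ is indeed the shape of the argument. But two ingredients you are missing are essential in the paper's proof and, as far as I can see, cannot be avoided.

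First, the paper uses the \emph{upper} filtration $\{F^i\}$ of Proposition~\ref{bcd} (with surjective connecting maps $g^i_F$) alongside the lower one, together with the comparison Lemma~\ref{cop}. In the all-$2$'s stratum this forces every factor to be locally free and in fact $Q_i\cong R_m(-(m-i)C)$ for a single rank-$2$ bundle $R=R_m$ whose $\chi$ is pinned down by $(L,\chi,k)$; so one never faces your ``moduli of rank-$2$ torsion-free sheaves, roughly $4g_C-3$'' estimate (which is not available in general), but instead bounds the parametrizing stack of this one $R$ exactly as in Proposition~\ref{coha}. In the mixed strata $(2^{m_2},1^{m_1})$ the paper does \emph{not} peel off your $Q_1$ of rank $2$; it peels off a rank-$1$ piece coming from the upper filtration (the sheaf $R_1$ when $m_1=1$, or more elaborately constructed $Q'_1$ via the interplay of upper and lower filtrations of $F'=F^{m_1}$ when $m_1\ge 2$), so that the quotient falls into a stratum already handled and the Ext$^2$ term is governed by rank-$1$ Hom's.

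Second, the stability of $F$ is used repeatedly and quantitatively. It yields inequalities such as $\chi(K_1)-\chi(Q'_1)<\frac{(m_1+m_2-1)L^2}{(m_1+2m_2)^2}$ (and the analogous bounds on $\text{Hom}(R,R)$ in the all-$2$'s case), which are exactly what make the $\text{Ext}^2$ bound tight enough to land on $L^2+K_X.L+1$. Your ``linear in $g_C$'' estimate for $\text{Hom}(Q_2,Q_2\otimes\omega_C)$ with $Q_2$ of rank $2$ would be off by a factor involving $\dim\text{Hom}(Q_2,Q_2)$, which can be large without the stability constraint; this is precisely the place where the computation is delicate, and stability is what rescues it.
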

\begin{proof}The proof is too long and moved to the appendix.
\end{proof}
Finally we state the following theorem which will play a key role in the next section where we compute several (virtual) Betti numbers of $M^{ss}(L,\chi)$ by computing its motivic measure.
\begin{thm}\label{entire}Let $L$ be $K_X$-negative such that $\ls^{int}\neq \emptyset$ and $L^2\geq0$.  Write $L=nL'$ with $L'$ primitive.  Assume moreover $L$ satisfies one of the following 4 conditions.

(1) $n=1$ or $2$; 

(2) $n$ is prime and either $|L'|^{int}=\emptyset$ or $g_{L'}=0$;

(3) $n=2p$ with $p$ prime and both $L'$ and $2L'$ satisfy (2);

(4) $\exists~ \mathscr{H}$ an ample class, such that $\forall ~0<L'\leq L$, $(\mathscr{H}+K_X).L'\leq0$ and $(\mathscr{H}+K_X).L<0$. 

Then there is a positive integer $\rho_L$ such that $\mm^{a}_{\bullet}(L,\chi)-\mn(L,\chi)$ is of codimension $\geq \rho_L$ in $\mm^{a}_{\bullet}(L,\chi)$ for any $a$ and $\chi$.  In particular $dim~\mm^{a}_{\bullet}(L,\chi)=dim~\mn(L,\chi)=L^2,~\forall~a,\chi$.
\end{thm}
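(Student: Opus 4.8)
The plan is to deduce the theorem by combining all the dimension estimates established so far. The key point is that $\mm^a_{\bullet}(L,\chi)-\mn(L,\chi)$ decomposes as the union of $\ts^a(L,\chi)$ (sheaves that are strictly semistable or unstable, handled by Proposition \ref{dlt}) together with the locus of \emph{stable} sheaves whose support is not integral, which is $\mc_R(L,\chi)\cup\mc_N(L,\chi)$. Indeed $\mm(L,\chi)-\mn(L,\chi)=\mc_R(L,\chi)\cup\mc_N(L,\chi)$ by the stratification $\ls=\ls^{int}\cup\ls^R\cup\ls^N$ noted before Lemma \ref{codr}, and $\mm^a_{\bullet}(L,\chi)-\mm(L,\chi)=\ts^a(L,\chi)$ by definition. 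So it suffices to bound the dimension of each of these three pieces by $L^2-\rho_L$ for some $\rho_L>0$.

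First I would observe that since $L$ is $K_X$-negative with $\ls^{int}\neq\emptyset$, Lemma \ref{iid} gives $h^0(L)=\chi(L)$, and moreover for every $0<L'\leq L$ with $|L'|^{int}\neq\emptyset$ the same lemma (applied to $L'$, which is again $K_X$-negative) gives $h^0(L')=\chi(L')$; hence Lemma \ref{psc} yields $s_L>0$, and Lemma \ref{inch} lets us compute $s_L$ using only the $L_k$ with $|L_k|^{int}\neq\emptyset$. By Proposition \ref{dlt}, $dim~\ts^a(L,\chi)\leq L^2-s_L$, and by Lemma \ref{codr}, $dim~\mc_R(L,\chi)\leq L^2-s_L$. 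It remains to bound $\mc_N(L,\chi)=\coprod_{k}\mc_{\frac Lk}$, where $k$ ranges over integers $>1$ with $\frac Lk\in Pic(X)$; this set is nonempty only if $n>1$, so under condition (1) with $n=1$ there is nothing further to prove and we may take $\rho_L=s_L$.

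For the remaining cases I would split according to $g_{\frac Lk}$. When $g_{\frac Lk}=0$, Proposition \ref{ckot} gives $dim~\mc_{\frac Lk}\leq L^2-\frac{k-1}{k}L^2\leq L^2-s_L$. When $g_{\frac Lk}>0$, one uses the stratification (\ref{stmc}) into $\mc_{\frac Lk}^{r_1,\dots,r_l}$: the stratum $(1,1,\dots,1)$ is bounded by Lemma \ref{rko} as $dim~\mc_{\frac Lk}^{1,\dots,1}\leq L^2+(K_X.L+1)$, and since $L=nL'$ is not primitive, $K_X.L<0$ forces $K_X.L\leq -2$, so $K_X.L+1<0$. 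For the strata with some $r_i\geq 2$, every stable $F$ in such a stratum lies in $\mt_{n_0}(L,\chi)$ with $n_0=r_1\geq 2$ by Remark \ref{fib}; under condition (4) Proposition \ref{ttc} bounds $\mt_{n}(L,\chi)$ in codimension $\geq n^2-2$, so the strata with $r_1\geq 3$ have dimension $\leq L^2-7$, while the strata with $r_1=2$ are contained in $\mt_2^o(L,\chi)\cap\mc_{\frac Lk}$ (after removing the smaller locus $\mt_3$), whose dimension is $\leq L^2+(K_X.L+1)$ by Theorem \ref{tt}. Under conditions (1)--(3) the only $k$ with $\frac Lk\in Pic(X)$ and $g_{\frac Lk}>0$ are $k=2$ (and, in case (3), also $k=p$ or $k=2p$ reduce to the $g=0$ case by condition (2)), handled directly by Proposition \ref{coha}. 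In all cases, setting $\rho_L:=\min\{s_L,\,-(K_X.L+1),\,7\}>0$ (the last term only relevant under (4), and $7$ replaced by the appropriate bound) gives the asserted codimension bound, and since $\mn(L,\chi)\subset\mm(L,\chi)$ has dimension $L^2+1$...

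— here I should be careful: $\mn(L,\chi)$ is a \emph{stack}, of dimension $\dim N(L,\chi)-1=L^2$ by Remark \ref{con}(3) and Lemma \ref{iid}. So once every piece of the complement has dimension $\leq L^2-\rho_L<L^2$, we conclude $dim~\mm^a_{\bullet}(L,\chi)=dim~\mn(L,\chi)=L^2$ and the complement has codimension $\geq\rho_L$.

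The main obstacle is the bookkeeping in the $g_{\frac Lk}>0$ case under condition (4): one must check that Theorem \ref{tt} plus Proposition \ref{ttc} genuinely cover all strata $\mc_{\frac Lk}^{r_1,\dots,r_l}$ with mixed ranks, i.e. that no stratum escapes both the $\mt_3$ estimate and the $(1,\dots,1)$ estimate — this is exactly why Theorem \ref{tt} (proved in the appendix) is needed as the bridge for the $r_1=2$ strata. Everything else is a direct assembly of the already-established bounds.
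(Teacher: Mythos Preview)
Your proposal is correct and matches the paper's intended argument. In the paper Theorem \ref{entire} is stated without proof as the culmination of Sections 2--4; your assembly of the decomposition $\mm^a_{\bullet}(L,\chi)-\mn(L,\chi)=\ts^a(L,\chi)\cup\mc_R(L,\chi)\cup\mc_N(L,\chi)$ together with Proposition \ref{dlt}, Lemma \ref{codr}, Proposition \ref{ckot}, Proposition \ref{coha}, Lemma \ref{rko}, Proposition \ref{ttc}, and Theorem \ref{tt} is exactly what the paper has in mind, and your case analysis under conditions (1)--(4) is accurate (including the observation that under (4) the strata with $r_1\geq 2$ are covered by $\mt_2^o\cup\mt_3$).
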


\begin{example}(1) $X=\p^2$, and $L=dH$ with $H$ the hyperplan class.  Then $\rho_d=\rho_{dH}$ can be chosen as follows.
\begin{equation}\label{rhop}\rho_d:=\left\{\begin{array}{l}d-1,~for~d=p~or~2p~with~p~prime.\\ \\ 7,~otherwise.\end{array}\right.\end{equation}
(2) $X=\p(\mo_{\p^1}\oplus\mo_{\p^1}(e))$ with $e=0,1$,  and $L=a\sigma+bf$ such that $a>0$ and $b>ae$.  Then $\rho_L$ can be chosen as follows.
\begin{equation}\label{rhoh}\rho_L:=\left\{\begin{array}{l}\min\{b-(a-1)e,a\},~for~a~prime~or~g.c.d(a,b)=1~or~2;\\ \\ \min\{7,b-(a-1)e,a\},~otherwise.\end{array}\right.\end{equation}
\end{example}

\section{Motivic measures and the main theorem.}
In this section we will compute motivic measures of $\mm(L,\chi)$, or more precisely $\mn(L,\chi)$.  As we said in the introduction, our strategy is to relate the moduli stack $\mn(L,\chi)$ to the stack corresponding to some Hilbert scheme of points on $X$.  Let $\mh^n$ be the stack associated to the Hilbert scheme $Hilb^{[n]}(X)$ parametrizing ideal sheaves of colength $n$ on $X$.  Then $dim~\mh^{n}=2n-1.$  

We ask $L$ to be $K_X$-negative and let $\ls^{int}\neq\emptyset$, hence $\chi(L)=h^0(L)$

\begin{defn}For two integers $k>0$ and $i$, we define $\mm_{k,i}^a(L,\chi)$ to be the (locally closed) substack of $\mm_{\bullet}^a(L,\chi)$ parametrizing sheaves $F\in\mm_{\bullet}^a(L,\chi)$ with $h^1(F(-iK_X))=k$ and $h^1(F(-nK_X))=0,\forall n>i.$  

Let $\mn_{k,i}(L,\chi)=\mn(L,\chi)\cap \mm_{k,i}^a(d,\chi).$ 
\end{defn}
Since $L$ is $K_X$-negative, it is easy to see the following proposition.
\begin{prop}\label{bound}For fixed $(\chi,a)$, $\mm_{k,i}^a(L,\chi)$ is empty except for finitely many pairs $(k,i)$.
\end{prop}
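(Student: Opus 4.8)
The plan is to show that for fixed $(\chi,a)$ there is an integer $i_0$ such that $\mm_{k,i}^a(L,\chi)$ is empty for all $i>i_0$, and that for each admissible $i\le i_0$ only finitely many $k$ occur; since the index set $\{(k,i)\}$ is then finite, the proposition follows. First I would use boundedness: by the remark after Lemma \ref{inch} (``it is easy to see the boundedness of $\mm^a_{\bullet}(L,\chi)$''), the family of sheaves $F\in\mm^a_{\bullet}(L,\chi)$ is bounded, hence the set of Hilbert polynomials with respect to $-K_X$ of the various $F$ is finite, and in particular the quantities $h^0(F(-nK_X))$ are uniformly bounded above by a fixed polynomial in $n$. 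This alone bounds each $h^1(F(-nK_X))=h^0(F(-nK_X))-\chi(F(-nK_X))$ from above once $n$ is fixed (here I use $h^2(F(-nK_X))=0$, which holds because $F$ is a pure $1$-dimensional sheaf supported on a curve and $-K_X$ is chosen on a rational surface — or more simply because $\dim\mathrm{Supp}(F)=1$ so $H^2$ vanishes for any twist). That gives finiteness of $k$ for each fixed $i$.

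Next I would handle the vanishing at large twists, which is the main point. Since $L$ is $K_X$-negative, for every curve $C$ in the support of $F$ (an effective divisor in $\ls$) the restriction $-K_X|_C$ has positive degree on every component: indeed $K_X.C<0$ and $K_X.C'<0$ for every $0<C'\le C$ by Definition \ref{look}. Because the family is bounded, there is a uniform bound on the arithmetic genus and on the ``defect'' $h^1$-type invariants of the pure sheaves $F$, so a single Serre-vanishing / Mumford-regularity argument applied uniformly over the bounded family produces an $i_0=i_0(\chi,a,L)$ with $h^1(F(-iK_X))=0$ for all $i\ge i_0$ and all $F\in\mm^a_{\bullet}(L,\chi)$. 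Concretely: restrict $F$ to its $1$-dimensional support $C\le L$; twisting by $-iK_X$ twists $F|_{C_{\mathrm{red}}}$-subquotients by line bundles of degree growing linearly in $i$ (with positive slope, by $K_X$-negativity), and once that degree exceeds $2g_{\mathrm{arith}}-2$ uniformly — which happens for $i$ past a bound depending only on the finitely many Hilbert polynomials in the bounded family — all higher cohomology vanishes. Thus $\mm_{k,i}^a(L,\chi)$ is empty for $i>i_0$ unless possibly $k=0$, but by the definition of $\mm_{k,i}^a$ (which requires $h^1(F(-nK_X))=0$ for all $n>i$, so the index $i$ records the \emph{last} twist where $h^1$ is nonzero) the case $k=0$ with $i>i_0$ cannot occur either. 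Hence $i$ ranges over a finite set $\{\,\cdot\le i_0\,\}$ (bounded below as well, since $h^1(F(-iK_X))$ is eventually the whole $h^0$ and certainly nonzero for $i\ll 0$, or one simply restricts attention to $i$ with $\mm_{k,i}^a\neq\emptyset$).

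Combining the two steps, the set of pairs $(k,i)$ with $\mm_{k,i}^a(L,\chi)\neq\emptyset$ is contained in $\{i\le i_0\}\times\{k\le K_0\}$ for suitable constants, hence finite. I expect the only genuinely technical point to be making the vanishing bound $i_0$ \emph{uniform} over the whole bounded family rather than for a single $F$; this is where one must invoke boundedness properly (e.g. spreading the family out over a finite-type base and applying semicontinuity, or quoting a uniform regularity statement for bounded families of pure $1$-dimensional sheaves). Everything else is a routine application of Riemann--Roch on the surface together with $h^2(F(-nK_X))=0$.
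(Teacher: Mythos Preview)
The paper gives no proof of this proposition; it simply prefaces the statement with ``Since $L$ is $K_X$-negative, it is easy to see the following proposition.'' Your argument is correct and supplies precisely the details the paper leaves implicit: boundedness of $\mm^a_\bullet(L,\chi)$ together with $K_X$-negativity (so that $-K_X$ has positive degree on every component of every support curve $C\le L$) yields a uniform vanishing threshold $i_0$ with $h^1(F(-iK_X))=0$ for all $i\ge i_0$ and all $F$ in the family; and boundedness plus semicontinuity bounds $k=h^1(F(-iK_X))$ for each of the finitely many relevant $i$.

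One small point of presentation. Your final containment $\{(k,i):\mm_{k,i}^a\neq\emptyset\}\subset\{i\le i_0\}\times\{k\le K_0\}$ is not yet a finite set; you do need the lower bound on $i$ that you mention parenthetically. The cleanest way to state it is: since $\chi(F(-iK_X))=\chi-iK_X.L$ and $K_X.L<0$, for any $i$ with $\chi-iK_X.L<0$ one has $h^1(F(-iK_X))\ge -\chi(F(-iK_X))>0$, so the largest index $i_F$ at which $h^1$ is nonzero satisfies $i_F\ge \lceil \chi/(K_X.L)\rceil -1$, a bound depending only on $\chi$ and $L$. (Your phrase ``$h^1$ is eventually the whole $h^0$'' seems to be a slip; what you want is that $h^1\ge -\chi(F(-iK_X))>0$ for $i\ll 0$.) With that, both $i$ and $k$ range over finite sets and the proposition follows.
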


\begin{defn}For two integers $l>0$ and $j$, we define $\mw_{l,j}^a(L,\chi)$ to be the (locally closed) substack of $\mm_{\bullet}^a(L,\chi)$ parametrizing sheaves $F\in\mm_{\bullet}^a(L,\chi)$ with  $h^0(F(-jK_X))=l$ and $h^0(F(-nK_X))=0,\forall n<j$. 

Let $\mv_{l,j}(L,\chi)=\mn(L,\chi)\cap \mw_{l,j}^a(L,\chi).$
\end{defn} 
\begin{rem}\label{duke}By sending each sheaf $F$ to its dual $\me xt^1(F,K_X)$, we get an isomorphism $\mm_{k,i}^a(L,\chi)\xrightarrow{\cong}\mw^{-\chi+a}_{k,-i}(L,-\chi)$, which identifies $\mn_{k,i}(L,\chi)$ with $\mv_{k,-i}(L,-\chi)$.
\end{rem}

\begin{prop}\label{dnki}For $\chi-iK_X.L\geq0$, $dim~\mn_{k,i}(L,\chi)\leq L^2-(\chi-iK_X.L)-k$.
\end{prop}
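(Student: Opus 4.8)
The plan is to exploit the defining condition $h^1(F(-iK_X))=k>0$ on $\mn_{k,i}(L,\chi)$ via Serre duality on the supporting curve. Fix $F\in\mn_{k,i}(L,\chi)$; since $F$ has integral support (as in Lemma \ref{iid}) it is a rank-$1$ torsion-free sheaf on an integral curve $C\in\ls^{int}$, and so is $G:=F(-iK_X)$. The curve $C$ sits on the smooth surface $X$, hence has only planar singularities, and $\omega_C\cong\mo_C(K_X+L)$ is a line bundle. By Serre duality on the Gorenstein curve $C$ we have $H^1(C,G)^{\vee}\cong\text{Hom}_C(G,\omega_C)$, so $\text{Hom}_C(G,\omega_C)$ has dimension $k$; since $h^1(F(-iK_X))\equiv k$ on $\mn_{k,i}(L,\chi)$, cohomology and base change makes these spaces glue into a rank-$k$ vector bundle over $\mn_{k,i}(L,\chi)$. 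Let $\mathcal{Q}$ denote the associated $\p^{k-1}$-bundle, parametrizing pairs $(F,[\psi])$ with $[\psi]$ a nonzero class in $\text{Hom}_C(F(-iK_X),\omega_C)$; then $dim~\mathcal{Q}=dim~\mn_{k,i}(L,\chi)+k-1$.

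Next I would analyze such a pair. As $G$ and $\omega_C$ are both rank-$1$ torsion-free on the integral curve $C$, a nonzero $\psi\colon G\to\omega_C$ is injective with $0$-dimensional cokernel of length $N:=\chi(\omega_C)-\chi(G)=(g_L-1)-(\chi-iK_X.L)$. If $N<0$ then $\text{Hom}_C(G,\omega_C)=0$, forcing $k=0$, so $\mn_{k,i}(L,\chi)=\emptyset$ and there is nothing to prove; hence assume $N\geq0$. A length-$N$ $0$-dimensional quotient of the line bundle $\omega_C$ is the same datum as a length-$N$ subscheme $Z\subset C$, via $\text{im}(\psi)=\omega_C\otimes I_Z$; conversely $G\cong\omega_C\otimes I_Z$ and $\psi$ is recovered up to scalar as the inclusion. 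This works in families, so it defines a morphism $\Phi\colon\mathcal{Q}\to Hilb^{[N]}(\mc/\ls^{int})$ into the relative Hilbert scheme of length-$N$ subschemes of the universal curve $\mc$ over $\ls^{int}$, sending $(F,[\psi])$ to $(C,Z)$.

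I would then observe that $\Phi$ is injective on points — from $(C,Z)$ one reconstructs $F=\mo_C((i+1)K_X+L)\otimes I_Z$ together with $[\psi]$ — while every object of $\mathcal{Q}$ has automorphism group $\mathbb{G}_m$ (the sheaves $F$ are stable, $\mn_{k,i}(L,\chi)\subset\mm(L,\chi)$), mapping to the trivial automorphisms of $(C,Z)$; hence $\Phi$ exhibits $\mathcal{Q}$ as a $\mathbb{G}_m$-gerbe onto its image, so $dim~\mathcal{Q}\leq dim~Hilb^{[N]}(\mc/\ls^{int})-1$. Each fibre $Hilb^{[N]}(C)$ over $[C]\in\ls^{int}$ is, by \cite{aik} ($C$ being integral with planar singularities), irreducible of dimension $N$, whence $dim~Hilb^{[N]}(\mc/\ls^{int})=dim~\ls+N$. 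Combining, and using $\chi(L)=h^0(L)$ (valid as $L$ is $K_X$-negative with $\ls^{int}\neq\emptyset$) so that $dim~\ls=\chi(L)-1=\tfrac12(L^2-K_X.L)$ and $g_L-1=\tfrac12(L^2+K_X.L)$, we obtain
\[dim~\mn_{k,i}(L,\chi)\leq dim~\ls+N-k=\tfrac12(L^2-K_X.L)+\tfrac12(L^2+K_X.L)-(\chi-iK_X.L)-k=L^2-(\chi-iK_X.L)-k.\]
The hypothesis $\chi-iK_X.L\geq0$ only serves to make this bound non-vacuous, i.e. to ensure the right-hand side does not exceed $dim~\mn(L,\chi)=L^2$.

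The step I expect to require the most care is the bookkeeping around $\Phi$: verifying that $(F,[\psi])\mapsto(C,Z)$ genuinely extends to a morphism of stacks (flatness of the cokernel of $\psi$ in families and of the resulting family of subschemes), and that it is a $\mathbb{G}_m$-gerbe onto its image, together with invoking the Altman–Iarrobino–Kleiman irreducibility and dimension statement for $Hilb^{[N]}$ of integral planar curves in the form used above. The cohomological inputs — constancy of $h^1$ yielding a vector bundle, and Serre duality on the Gorenstein curve $C$ — are routine.
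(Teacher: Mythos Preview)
Your argument is correct and reaches the same bound, but the route is genuinely different from the paper's. The paper interprets the $k$-dimensional space $H^1(F(-iK_X))^{\vee}$ via Serre duality on the \emph{surface} as $\text{Ext}^1(F(-iK_X),K_X)$, so that a nonzero class produces a non-split extension $0\to K_X\to I_F(L+K_X)\to F(-iK_X)\to 0$ with $I_F$ an ideal sheaf of colength $\tilde d_i=\frac{L.(L+K_X)}2-(\chi-iK_X.L)$ on $X$; it then compares $\mext^1(\mn_{k,i},K_X)^{*}$ with the stack $\mhom(K_X,\mh^{\tilde d_i}_{\chi-(i+1)K_X.L+1})^{*}$ sitting over the Hilbert scheme of points on $X$ and reads off the inequality. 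You instead apply Serre duality on the \emph{curve} $C$ to view the same $k$-dimensional space as $\text{Hom}_C(F(-iK_X),\omega_C)$, and your target is the relative Hilbert scheme $Hilb^{[N]}(\mc/\ls^{int})$ of length-$N$ subschemes of curves in $\ls^{int}$ (here $N=\tilde d_i$). Both constructions bundle the same rank-$k$ space over $\mn_{k,i}(L,\chi)$, but the receiving moduli problems differ.

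What each buys: the paper's version is the one that feeds directly into the later motivic comparison with $\mh^{\tilde d}$ (Theorem~\ref{main}), so it is doing double duty; your version is more intrinsic to $C$, avoids the extra stratification of $\mh^{\tilde d_i}$ by $h^0(I(L))$, and makes the injectivity of the comparison map transparent (you can literally reconstruct $(F,[\psi])$ from $(C,Z)$). One small caveat: the reference \cite{aik} in the paper is invoked for the compactified Jacobian rather than for $\dim Hilb^{[N]}(C)=N$; the latter fact (for integral curves with planar singularities) is standard but you may want to cite Brian\c con or Iarrobino directly. Otherwise the stack bookkeeping you flag (constancy of $h^1$ giving a vector bundle, $\Phi$ a $\mathbb{G}_m$-gerbe onto its image) is routine and your proof goes through.
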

\begin{proof}Let $F\in\mn_{k,i}(L,\chi)$, then $H^1(F(-iK_X))\neq 0$ and hence we have a non split exact sequence
\begin{equation}\label{hib}0\ra K_X\ra I_F(L+K_X)\ra F(-iK_X)\ra 0.
\end{equation} 
Since $Supp(F)$ is integral and (\ref{hib}) does not split, $I_F\in Hilb^{[\tilde{d_i}]}(X)$ with $\tilde{d_i}:=\frac{L.(L+K_X)}2-(\chi-iK_X.L)$.  

On the other hand, let $I_{\tilde{d_i}}$ be an ideal sheaf of colength $\tilde{d_i}$, let $h\in\text{Hom}(K_X,I_{\tilde{d_i}}(L+K_X))$ with $h\neq0$, then $h$ has to be injective.  Let $F_h$ be the cokernel. 
\begin{equation}\label{bih}0\ra K_X\xrightarrow{h} I_{\tilde{d_i}}(L+K_X)\ra F_h\ra0.
\end{equation}

Denote by $\mh^{\tilde{d_i}}_{\chi-(1+i)K_X.L+1}$ the (locally closed) substack of $\mh^{\tilde{d_i}}$ parametrizing ideal sheaves $I_{\tilde{d_i}}$ such that $dim~H^0(I_{\tilde{d_i}}(L))=\chi-(1+i)K_X.L+1$.  By (\ref{hib}), $I_F\in\mh^{\tilde{d_i}}_{\chi-(1+i)K_X.L+1}$ if $F\in\mn_{k,i}(L,\chi)$.

Let $\mext^1(\mn_{k,i},K_X)^{*}$ be the stack over $\mn_{k,i}(L,\chi)$ parametrizing non-spliting extensions in $\text{Ext}^1(F(-iK_X),K_X)$ with $F\in\mn_{k,i}(L,\chi)$.  Then 
\[dim~\mext^1(\mn_{k,i},K_X)^{*}=k+dim~\mn_{k,i}(L,\chi)\]

Let $\mhom(K_X,\mh^{\tilde{d_i}}_{\chi-(i+1)K_X.L+1})^{*}$ be the stack over $\mh^{\tilde{d_i}}_{\chi-(i+1)K_X.L+1}$ parametrizing non-zero maps in $\text{Hom}(K_X,I_{\tilde{d_i}}(L+K_X))$ with $I_{\tilde{d_i}}\in \mh^{\tilde{d_i}}_{\chi-(i+1)K_X+1}$. Then
\[\begin{array}{r}dim~\mhom(K_X,\mh^{\tilde{d_i}}_{\chi-(i+1)K_X.L+1})^{*}=\chi-(i+1)K_X.L+1+dim~\mh^{\tilde{d_i}}_{\chi-(i+1)K_X.L+1}\\
\leq 2\tilde{d_i}+\chi-(i+1)K_X.L=L^2-(\chi-iK_X.L).\end{array}\]

We then have an injection by (\ref{hib})
\[\mext^1(\mn_{k,i},K_X)^{*}\hookrightarrow \mhom(K_X,\mh^{\tilde{d_i}}_{\chi-(i+1)K_X.L+1})^{*}.\]
Hence 
\[dim~\mext^1(\mn_{k,i},K_X)^{*}\leq dim~\mhom(K_X,\mh^{\tilde{d_i}}_{\chi-(i+1)K_X.L+1})^{*},\]
which implies
\[dim~\mn_{k,i}(L,\chi)\leq L^2-(\chi-iK_X.L)-k.\]
The proposition is proved.
\end{proof}
\begin{rem}\label{duck}By Proposition \ref{dnki} and Remark \ref{duke}, we know that \[dim~\mv_{l,j}(L,\chi)\leq L^2+(\chi-jK_X.L)-l,~for~\chi-jK_X.L<0.\]
\end{rem}

Now we start with an ideal sheaf $I_{\tilde{d}}$ and a nonzero element $h$ in $\text{Hom}(K_X,I_{\tilde{d}}(L+K_X))$, then by (\ref{bih}) this will give us a 1-dimensional sheaf $F_{h}$.  The following lemma states that $F_h$ is always pure. 

\begin{lemma}\label{quit}Let $J$ be any torsion free rank 1 sheaf on $X$ such that $H^0(J)\neq 0$.  Then any nonzero element  $h_J\in H^0(J)$ gives a sequence
\[0\ra\mo_{X}\xrightarrow{h_J} J\ra F_{h_J}\ra 0,\]
with $F_{h_J}$ pure of dimension one.
\end{lemma}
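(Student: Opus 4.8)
The plan is to show directly that $F_{h_J}$ has no zero-dimensional subsheaf, since a $1$-dimensional coherent sheaf on a surface is pure if and only if it admits no nonzero subsheaf supported in dimension $0$. First I would note that because $J$ is torsion free of rank $1$ on the smooth surface $X$, the section $h_J$ is injective (its kernel is a torsion subsheaf of the locally free-in-codimension-one sheaf $\mo_X$, hence $0$), so the sequence $0\ra\mo_X\xrightarrow{h_J}J\ra F_{h_J}\ra0$ is exact and $F_{h_J}$ is a quotient with $1$-dimensional support (it is $0$-dimensional support cannot occur since $\mo_X$ and $J$ have the same rank, so the cokernel is a torsion sheaf whose support is the vanishing locus of $h_J$, a divisor, possibly empty — but if empty then $h_J$ is an isomorphism and $F_{h_J}=0$, which is vacuously pure).

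Next I would suppose for contradiction that $T\subset F_{h_J}$ is a nonzero subsheaf with $\dim\,\mathrm{Supp}(T)=0$, and let $\widetilde{J}\subset J$ be its preimage under the surjection $J\twoheadrightarrow F_{h_J}$, so that we have $0\ra\mo_X\ra\widetilde{J}\ra T\ra0$. The key point is a local/reflexivity argument: $\widetilde{J}$ is a subsheaf of the torsion free rank $1$ sheaf $J$, hence itself torsion free of rank $1$, and it contains $\mo_X$ with a $0$-dimensional quotient $T$. I would then argue that this forces $T=0$. One clean way: apply $\mathcal{H}om(-,\mo_X)$ to $0\ra\mo_X\ra\widetilde{J}\ra T\ra0$; since $\dim T=0$ on a surface we have $\mathcal{E}xt^0(T,\mo_X)=\mathcal{E}xt^1(T,\mo_X)=0$, so $\widetilde{J}^{\vee}\cong\mo_X^{\vee}=\mo_X$, and hence the double dual gives $\widetilde{J}^{\vee\vee}\cong\mo_X$. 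But $\mo_X\subset\widetilde{J}\subset\widetilde{J}^{\vee\vee}=\mo_X$ (the inclusion $\widetilde{J}\hookrightarrow\widetilde{J}^{\vee\vee}$ is the canonical one, which is injective because $\widetilde{J}$ is torsion free), forcing $\widetilde{J}=\mo_X$ and therefore $T=0$, a contradiction. Thus $F_{h_J}$ has no $0$-dimensional subsheaf and is pure of dimension one.

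Alternatively, and perhaps more in the spirit of the paper, I would phrase the last step via depth: $\mo_X\subset\widetilde{J}\subset\widetilde{J}^{\vee\vee}$ with $\widetilde{J}^{\vee\vee}$ reflexive on a regular surface, hence locally free of rank $1$, i.e. a line bundle $L_0$ with a section vanishing on the divisor where $\widetilde{J}$ fails to be locally free together with $\mathrm{Supp}(T)$; comparing first Chern classes, $c_1(\widetilde{J})=c_1(\mo_X)=0$ forces $L_0=\mo_X$ and then $T$, being the cokernel of $\mo_X\hookrightarrow\mo_X$, is $0$. The main obstacle — really the only subtlety — is handling the possibility that $\widetilde{J}$ is not itself locally free; this is exactly what the passage to the reflexive hull $\widetilde{J}^{\vee\vee}$ (a line bundle, since reflexive sheaves of rank $1$ on a smooth surface are invertible) is designed to circumvent, so the argument goes through cleanly. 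I would close by remarking that this applies in particular to $J=I_{\tilde d}(L+K_X)$ with $h_J=h$, giving the purity of $F_h$ in \eqref{bih} used in the proof of Proposition \ref{dnki}.
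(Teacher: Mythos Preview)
Your proof is correct, but it takes a longer route than the paper. The paper dispatches the purity in one line: given a $0$-dimensional $T\subset F_{h_J}$, Serre duality on the surface gives $\text{Ext}^1(T,\mo_X)^\vee\cong\text{Ext}^1(\mo_X,T)=H^1(T)=0$, so the pullback extension $0\ra\mo_X\ra\widetilde{J}\ra T\ra0$ splits; hence $T$ embeds as a subsheaf of $J$, which is impossible since $J$ is torsion free. By contrast, you pass through the reflexive hull, using the local vanishing $\mathcal{E}xt^0(T,\mo_X)=\mathcal{E}xt^1(T,\mo_X)=0$ to identify $\widetilde{J}^{\vee\vee}\cong\mo_X$ and then squeeze $\widetilde{J}$ between two copies of $\mo_X$. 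Your argument is sound (reflexive rank~$1$ on a smooth surface is invertible, and the double-dualized map is the identity, so the squeeze works), but it invokes more machinery to reach the same contradiction that the paper gets immediately from the global Ext vanishing. The paper's approach also makes transparent \emph{why} purity holds: any $0$-dimensional piece of $F_{h_J}$ would have to lift into $J$, and $J$ has no torsion. Your alternative Chern-class paragraph is essentially the same idea rephrased and is likewise correct, though again less direct than simply splitting the extension.
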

\begin{proof}The injectivity of $h_J$ is obvious.  Let $T\subset F_{h_J}$ be 0-dimensional.  Since $\text{Ext}^1(T,\mo_{X})^{\vee}\cong \text{Ext}^1(\mo_{X},T)=0$, $T$ must also be contained in $J$.  Then $T=0$ by the torsion freeness of $J$.  Hence the lemma.
\end{proof}

By (\ref{bih}), $h^0(I_{\tilde{d_i}}(L+K_X))=h^0(F_h)$.  We stratify $\mh^n$ via $h^0(I_n(L+K_X))$.
\begin{defn}Let $n=\frac{L.(L+K_X)}2+\Delta$ for some $\Delta>0$ such that $n>0$.  Let $\mh^{n,l}_L~(0\leq l\leq h^0(L+K_X))$ be the substack of $\mh^n$ parametrizing ideal sheaves $I_{n}$ of colength $n$ satisfying that $h^0(I_n(L+K_X))=l$.  
\end{defn}

We have the dimension estimate for $\mh^{n,l}_L$ as follows. 
\begin{lemma}\label{inch}If $h^0(L+K_X)\leq0$, then $\mh^n=\mh^{n,0}_L$.  If $L+K_X>0$, assume moreover $s_{L+K_X}\geq0$, then for $l>0$, $dim~\mh^{n,l}_L\leq 2n-1-\Delta$.  
\end{lemma}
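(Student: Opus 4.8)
The plan is to prove Lemma \ref{inch} in two parts, corresponding to the two cases in the statement. The first part, that $h^0(L+K_X)\leq 0$ forces $\mh^n=\mh^{n,0}_L$, is essentially trivial: for any ideal sheaf $I_n$ of colength $n$ we have an inclusion $I_n(L+K_X)\hookrightarrow L+K_X$, hence $h^0(I_n(L+K_X))\leq h^0(L+K_X)$; since $X$ is rational and $L+K_X$ is not effective, $h^0(L+K_X)=0$ (here I would use that $H^2$ of an effective line bundle vanishes, as in the proof of Lemma \ref{psc}, so $h^0(L+K_X)=0$ follows from $\chi(L+K_X)\le 0$ together with non-effectivity), and thus $h^0(I_n(L+K_X))=0$ for every $I_n$, which is exactly the claim $\mh^n=\mh^{n,0}_L$.

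**The main case.** For the second part, assume $L+K_X>0$ and $s_{L+K_X}\geq 0$, and fix $l>0$. Let $I_n\in\mh^{n,l}_L$, so $h^0(I_n(L+K_X))=l>0$. I would pick a nonzero section $h\in H^0(I_n(L+K_X))=\mathrm{Hom}(\mo_X, I_n(L+K_X))$; by Lemma \ref{quit} this is an injection $\mo_X\hookrightarrow I_n(L+K_X)$ with cokernel $F_h$ pure of dimension one, with $c_1(F_h)=L+K_X$ and $\chi(F_h)=\chi(I_n(L+K_X))-\chi(\mo_X)$, which works out to $h^0(F_h)=l$. This exhibits $\mh^{n,l}_L$ as the image of a stack of pairs $(F_h, h)$ where $F_h$ ranges over pure $1$-dimensional sheaves with first Chern class $L+K_X$ and the indicated Euler characteristic, and $h$ ranges over the nonzero global sections. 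The point of requiring $L+K_X>0$ with $s_{L+K_X}\geq 0$ is that then I can apply the dimension estimates of Section 2 — specifically Proposition \ref{dlt} together with the fact that $\dim\mm_\bullet^{a'}(L+K_X,\chi')\le (L+K_X)^2$ whenever $s_{L+K_X}\ge 0$ — to the moduli stack $\mm^{a}_{\bullet}(L+K_X, \chi')$ in which these $F_h$ live (the condition $(C_1)$ being automatic once we track the bound on subsheaves coming from $I_n$ being an ideal sheaf, as in the proof of Lemma \ref{codr}).

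**Counting dimensions.** Concretely I would run the following count. The stack of sheaves $F$ on $X$ with $c_1(F)=L+K_X$, $\chi(F)=l$, pure of dimension one, has dimension at most $(L+K_X)^2$ by the above. Over it, the choice of a nonzero $h\in H^0(\mo_X, \cdot)$ reconstructing $I_n(L+K_X)$ as a kernel is governed by $\mathrm{Hom}(F, \mo_X[1])=\mathrm{Ext}^1(F,\mo_X)$, whose dimension I bound using $\chi(F,\mo_X)$ and Serre duality; alternatively, and more cleanly, I would reverse the bookkeeping exactly as in Proposition \ref{dnki}: stratify by $h^0(F(\cdot))$, use that the fibers of $I_n \mapsto F_h$ have dimension $h^0 - 1$ on the Hilbert side and the Ext group controls the other direction, and read off $\dim \mh^{n,l}_L + (\text{correction}) \le \dim(\text{stack of }F) + \dim\mathrm{Ext}^1(F,\mo_X)$. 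Plugging in $\dim\mh^n = 2n-1$ and $n=\frac{L.(L+K_X)}{2}+\Delta$, and using $\chi(I_n(L+K_X)) = \chi(L+K_X) - n = \frac{(L+K_X)^2 - (L+K_X).K_X}{2}$ wait — more simply $\chi(I_n(L+K_X))=\chi(L+K_X)-n$, so that $h^0(F_h)=l$ pins down $n$ in terms of $l$ and the discrete invariants — the inequality $\dim\mh^{n,l}_L\le 2n-1-\Delta$ should drop out after collecting terms, the loss of $\Delta$ reflecting precisely that imposing $h^0(I_n(L+K_X))\ge l>0$ is $\Delta$ conditions more restrictive than the generic behavior on $\mh^n$.

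**The main obstacle.** The delicate point is making the dimension estimate for the stack of pure sheaves $F$ with $c_1(F)=L+K_X$ actually apply: this needs $L+K_X$ to itself be $K_X$-negative (or at least to have $s_{L+K_X}\ge 0$, which is the hypothesis), and it needs the $(C_1)$-type bound on subsheaves of $F_h$ inherited from $I_n$ being an ideal sheaf of colength $n$ — this is the same kind of subsheaf-tracking done in the proof of Lemma \ref{codr} and in Proposition \ref{dnki}, and I expect it to be the step requiring the most care. A secondary subtlety is the borderline case $s_{L+K_X}=0$, where $\dim\mm^{a'}_\bullet(L+K_X,\chi')$ could a priori equal $(L+K_X)^2$ with the boundary stratum contributing as much as the main one; one must check the count still closes, which it does because the extra $+\Delta$ slack is strictly positive for $\Delta>0$.
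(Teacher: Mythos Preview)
Your approach is the same as the paper's: form the exact sequence $0\to\mo_X\to I_n(L+K_X)\to F\to 0$ using a nonzero section, observe $F$ is pure with $c_1(F)=L+K_X$ and $\chi(F)=-\Delta$, invoke $s_{L+K_X}\ge 0$ to bound $\dim\mm_\bullet^a(L+K_X,-\Delta)\le(L+K_X)^2$, and then compare fibers of the two-way correspondence as in Proposition~\ref{dnki}. Two things need fixing.

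First, a small slip: from the long exact sequence in cohomology (and $H^1(\mo_X)=0$) one gets $h^0(F)=h^0(I_n(L+K_X))-1=l-1$, not $l$.

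Second, and more importantly, you never identify the one nontrivial bound that makes the count close. The backward fiber over $F$ is $\mathrm{Ext}^1(F,\mo_X)\cong H^1(F(K_X))^\vee$, and by Riemann--Roch $h^1(F(K_X))=h^0(F(K_X))+\Delta-K_X.(L+K_X)$. The paper's key step is the inequality
\[
h^0(F(K_X))\le h^0(F)=l-1,
\]
which is what allows the $l$ on the Hilbert side (the forward fiber $H^0(I_n(L+K_X))$ has dimension $l$) to cancel against the $l-1$ appearing in the bound on the backward fiber. Plugging in then gives
\[
\dim\mh_L^{n,l}+l\le (L+K_X)^2 + (l-1)+\Delta-K_X.(L+K_X)=2n-1-\Delta+l,
\]
since $(L+K_X)^2-K_X.(L+K_X)=L.(L+K_X)=2n-2\Delta$. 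Your phrase ``bound $\mathrm{Ext}^1(F,\mo_X)$ using $\chi(F,\mo_X)$ and Serre duality'' only yields $\mathrm{ext}^1=-\chi(F,\mo_X)+h^0(F(K_X))$, so without the bound on $h^0(F(K_X))$ in terms of $l$ the inequality does not close --- the claim that it ``should drop out after collecting terms'' is exactly the step that requires this extra input. The subsheaf-tracking issue you flag as the main obstacle is in fact easy (any $F'\subset F$ has $h^0(F')\le h^0(F)=l-1$, so $\chi(F')\le l-1$ and one may take $a=l$); the genuine content is the $h^0(F(K_X))$ bound.
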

\begin{proof}Obviously if $L+K_X\leq0$, then $h^0(I_n(L+K_X))=0$ for all $I_n$ with $n>0$.  Assume that $L+K_X>0$.  For an ideal sheaf $I_n\in\mh_L^{n,l}$ with $l>0$, we can fit it into the following sequence.
\[0\ra\mo_{X}\ra I_n(L+K_X)\ra F\ra 0.\]
By Lemma \ref{quit}, $F\in\mm_{\bullet}^a(L+K_X,-\Delta)$ (with $a=l$ for instance).  Moreover $h^0(F(K_X))\leq h^0(F)=l-1.$  Hence $dim~H^1(F(K_X))\leq l-1+\Delta-K_X.(L+K_X).$  Then by analogous argument to the proof of Proposition \ref{dnki}, we have
\[dim~\mh_L^{n,l}+l\leq dim~\mm_{\bullet}^a(L+K_X,\Delta)+l-1-K_X.(L+K_X)+\Delta=2n-1-\Delta+l,\]
where $dim~\mm^a_{\bullet}(L+K_X,\Delta)=(L+K_X)^2$ because $s_{L+K_X}\geq0$.  Hence the lemma.
\end{proof}

Let $\mu_A(-)$ be the $A$-valued motivic measure (see e.g. Section 1 in \cite{kap}) with $A$ a commutative ring or a field if needed.  Denote by $A_n$ the subgroup (not a subring) generated by the image of $\mu_A(\ms)$ with $dim~\ms\leq n$.  

By Proposition \ref{dlt}, we know that 
\[\mu_A(\mm^a_{\bullet}(L,\chi))~\equiv~\mu_A(\mm(L,\chi))~~~mod~(A_{L^2-s_{L}}).\]

If Theorem \ref{entire} applies to $L$, then we have
\[\mu_A(\mm^a_{\bullet}(L,\chi))~\equiv~\mu_A(\mn(L,\chi))~~~mod~(A_{L^2-\rho_L}).\]

For two numbers $\chi$ and $\chi'$, we say that $\chi\sim\chi'$ if $\exists~ \widehat{L}\in Pic(X)$ such that $\pm\chi\equiv\chi'$ $mod~(\widehat{L}.L)$.  It is easy to see that $\mn(L,\chi)\cong\mn(L,\chi')$ if $\chi\sim \chi'$.  Hence we may take 
$K_X.L\leq \chi<0$.

Since $K_X.L\leq\chi<0$, by Proposition \ref{dnki} and Remark \ref{duck} we have for a generic $[F]\in \mn(L,\chi)$, $h^0(F)=0$ and $h^1(F(-K_X))=0$.  Let $\tilde{d}=\tilde{d_0}=\frac{L.(L+K_X)}2-\chi$.  Then $\forall I_{\tilde{d}}\in\mh_L^{\tilde{d},0}$, $H^0(I_{\tilde{d}}(L))\neq0$ since $\chi(I_{\tilde{d}}(L))=-K_X.L+1+\chi>0.$  Define $\mh_L^{\tilde{d},0,0}$ to be the open substack of $\mh_L^{\tilde{d},0}$ parametrizing ideal sheaves $I_{\tilde{d}}\in\mh_L^{\tilde{d},0}$ such that $H^1(I_{\tilde{d}}(L))=0$.
\begin{lemma}\label{ddv}If Theorem \ref{entire} applies to $L$, then $\mh_L^{\tilde{d},0}-\mh_L^{\tilde{d},0,0}$ is of dimension $\leq 2\tilde{d}-1-\min\{\chi-K_X.L,\rho_L\}$.
\end{lemma}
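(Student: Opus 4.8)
The plan is to analyze the two ways in which an ideal sheaf $I_{\tilde d}\in\mh_L^{\tilde d,0}$ can fail to lie in $\mh_L^{\tilde d,0,0}$: either $H^1(I_{\tilde d}(L))\neq 0$, or (although this is already ruled out by the definition of $\mh_L^{\tilde d,0}$) we should keep control of the sheaf $F_h$ produced from a section $h\in H^0(I_{\tilde d}(L))$. Concretely, for $I_{\tilde d}\in\mh_L^{\tilde d,0}$ we have $H^0(I_{\tilde d}(L))\neq 0$ since $\chi(I_{\tilde d}(L))=-K_X.L+1+\chi>0$, and by Lemma \ref{quit} any nonzero $h\in H^0(I_{\tilde d}(L))$ gives a pure $1$-dimensional sheaf $F_h$ sitting in
\begin{equation}\label{ddv-seq}0\ra\mo_X\xrightarrow{h} I_{\tilde d}(L)\ra F_h\ra 0,\end{equation}
with $c_1(F_h)=L$, $\chi(F_h)=\chi(I_{\tilde d}(L))-1=-K_X.L+\chi$, and $F_h\in\mm^a_{\bullet}(L,-K_X.L+\chi)$ for a suitable $a$. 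The condition $H^1(I_{\tilde d}(L))\neq 0$ is equivalent, via \eqref{ddv-seq} and $H^{\geq 1}(\mo_X)=0$ (as $X$ is rational), to $H^1(F_h)\neq 0$, i.e. $F_h$ lies in the jump locus where $h^1(F_h)\geq 1$; moreover $H^0(I_{\tilde d}(L))=H^0(F_h)\oplus\mathbb C\cdot h$, so the stratification of $\mh_L^{\tilde d,0}$ by $h^0(I_{\tilde d}(L))$ matches the stratification of the relevant moduli stack by $h^0(F_h)$.

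The key steps, in order, are: (i) build the incidence stack $\mathcal Z$ parametrizing pairs $(I_{\tilde d},[h])$ with $I_{\tilde d}\in\mh_L^{\tilde d,0}$, $[h]\in\p(H^0(I_{\tilde d}(L)))$, together with its two projections — one to $\mh_L^{\tilde d,0}$ (whose fibre over $I_{\tilde d}$ is $\p^{h^0(I_{\tilde d}(L))-1}$, of dimension $-K_X.L+\chi+$(correction from $h^1$)$-1$ generically, i.e. $\chi-K_X.L-1$ on the locus where $h^1=0$), and one to $\mm^a_{\bullet}(L,-K_X.L+\chi)$ sending $(I_{\tilde d},[h])\mapsto F_h$, which is an embedding onto the locus of pure sheaves $F$ with $h^0(F)\geq 1$ by the standard reconstruction $I_{\tilde d}(L)=\ker(\text{(sum of the dual sequence)})$; (ii) apply Theorem \ref{entire} to $L$ to get that $\mm^a_{\bullet}(L,-K_X.L+\chi)-\mn(L,-K_X.L+\chi)$ has codimension $\geq\rho_L$, hence dimension $\leq L^2-\rho_L$; (iii) on $\mn(L,-K_X.L+\chi)$ use Remark \ref{duck} / Proposition \ref{dnki} to bound the locus where $h^1(F)\geq 1$ by dimension $\leq L^2+(\chi-K_X.L)-1$ after the appropriate twist-duality bookkeeping (here the relevant Euler characteristic is $-K_X.L+\chi$, which must be made negative by shifting $\chi$ within its $\sim$-class, or by dualizing), giving the jump locus dimension $\leq L^2-(\chi-K_X.L)-1$ or the symmetric bound; (iv) transport these bounds back through $\mathcal Z$: the preimage in $\mathcal Z$ of the bad locus in $\mm^a_{\bullet}$ has dimension $\leq(L^2-\rho_L)+(\text{fibre dim}\leq\chi-K_X.L)$, and the preimage of the $h^1$-jump locus has dimension $\leq(L^2-(\chi-K_X.L)-1)+(\chi-K_X.L)$; pushing forward to $\mh_L^{\tilde d,0}$ loses at least $\chi-K_X.L-1$ dimensions (the generic fibre), and comparing with $\dim\mh^{\tilde d}=2\tilde d-1$ and $\dim\mh_L^{\tilde d,0}\leq 2\tilde d-1$ yields that $\mh_L^{\tilde d,0}-\mh_L^{\tilde d,0,0}$ has dimension $\leq 2\tilde d-1-\min\{\chi-K_X.L,\rho_L\}$.

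The main obstacle I expect is step (iv), the careful bookkeeping of the incidence correspondence: one must verify that the projection $\mathcal Z\to\mh_L^{\tilde d,0}$ really has fibre dimension exactly $h^0(I_{\tilde d}(L))-1$ with the correct value on each stratum (so that the estimate in terms of $h^0(F_h)$, Lemma \ref{inch}-style, does not leak dimension), and that the fibres of $\mathcal Z\to\mm^a_{\bullet}(L,-K_X.L+\chi)$ over the bad sheaves are genuinely small — a point $F$ with $h^0(F)=l$ contributes a $\p^{l-1}$, and one must feed $l$ back in via the stratified bound. A secondary subtlety is that Theorem \ref{entire} and Proposition \ref{dnki} are stated for $\mm(L,\chi)$ resp. $\mn(L,\chi)$ with a sign/range hypothesis on $\chi$; since the sheaf appearing here has $\chi(F_h)=-K_X.L+\chi$ which need not satisfy $K_X.L\leq\cdot<0$, one has to invoke the $\sim$-equivalence $\mn(L,\chi')\cong\mn(L,\chi'')$ (twisting by line bundles and dualizing via $\me xt^1(-,K_X)$, cf. Remark \ref{duke}) to move into the range where the earlier estimates apply, and check this shift is compatible with the incidence stack. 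Once these two dimension counts are aligned, taking the minimum of the two losses $\rho_L$ and $\chi-K_X.L$ finishes the proof.
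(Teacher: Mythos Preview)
Your plan is essentially the paper's proof, but your choice of twist makes the bookkeeping harder than it needs to be and leaves step (iii)--(iv) unresolved. The paper tensors by $K_X$ rather than $\mo_X$: from $0\ra K_X\ra I_{\tilde d}(L+K_X)\ra F\ra 0$ one gets $F\in\mm_\bullet^a(L,\chi)$ with the \emph{same} $\chi$ already in range, so no $\sim$-equivalence or duality shift is required. Moreover, because $I_{\tilde d}\in\mh_L^{\tilde d,0}$ means $h^0(I_{\tilde d}(L+K_X))=0$, one gets $h^0(F)=0$ exactly, hence $h^1(F)=-\chi$ is constant along the whole locus; this makes the $\mext$-fibre dimension constant and removes the stratification worry you flagged in step (iv). The condition $H^1(I_{\tilde d}(L))\neq 0$ becomes simply $H^1(F(-K_X))\neq 0$, i.e.\ $F\in\coprod_{i\geq 1}\mm^a_{k,i}(L,\chi)$, to which Proposition~\ref{dnki} (for the $\mn$-part, giving the bound $L^2-(\chi-K_X.L)$) and Theorem~\ref{entire} (for the complement, giving $L^2-\rho_L$) apply directly.

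With that twist, your incidence $\mathcal Z$ becomes an injection $\mhom(K_X,\mh_L^{\tilde d,0}-\mh_L^{\tilde d,0,0})^*\hookrightarrow\mext^1(\coprod_{i\geq1}\mm^a_{k,i},K_X)^*$. The right-hand side has dimension $\leq -\chi+L^2-\min\{\chi-K_X.L,\rho_L\}$; on the left, the fibre over each $I_{\tilde d}$ has dimension $h^0(I_{\tilde d}(L))\geq -K_X.L+1+\chi$, so the base has dimension $\leq 2\tilde d-1-\min\{\chi-K_X.L,\rho_L\}$. Your version with $F_h=F(-K_X)$ would eventually reach the same inequality after untwisting, but the intermediate claims in your step (iii) (``$\leq L^2+(\chi-K_X.L)-1$\ldots or the symmetric bound'') are not pinned down, and the ``loses at least $\chi-K_X.L-1$ dimensions'' in step (iv) is the wrong quantity for the non-projectivized $\mhom$-stack.
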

\begin{proof}$\forall I_{\tilde{d}}\in\mh_L^{\tilde{d},0}-\mh_L^{\tilde{d},0,0}$, $H^0(I_{\tilde{d}}(L))\neq0$.  Hence by Lemma \ref{quit} we have the following exact sequence
\[0\ra K_X\ra I_{\tilde{d}}(L+K_X)\ra F\ra 0,\]
with $F\in\mm_{\bullet}^{a}(L,\chi)$.  

Since $H^0(F)\cong H^0(I_{\tilde{d}}(L+K_X))=0$, $h^1(F)=-\chi$.  Moreover since $I_{\tilde{d}}\in \mh^{\tilde{d},0}-\mh^{\tilde{d},0,0}$, $H^1(F(-K_X))\cong H^1(I_{\tilde{d}}(L))\neq 0$, hence $F\in\coprod_{i\geq1}\mm^a_{k,i}(L,\chi)$.  By Proposition \ref{dnki} and Theorem \ref{entire}, $dim~\coprod_{i\geq1}\mm^a_{k,i}(L,\chi)\leq L^2-\min\{\chi-K_X.L,\rho_L\}$.  By the analogous argument to the proof of Proposition \ref{dnki} we have
\[dim~(\mh_L^{\tilde{d},0}-\mh_L^{\tilde{d},0,0})-K_X.L+1+\chi\leq L^2-\min\{\chi-K_X.L,\rho_L\}-\chi,\]
since $2\tilde{d}=L(L+K_X)-2\chi$.  Hence the lemma. 
\end{proof}

For every sheaf $F\in\mm_{\bullet}^a(L,\chi)$, there is a non split sequence
\begin{equation}\label{nsp}0\ra K_X\ra \widetilde{I}\ra F\ra0.\end{equation}
$\widetilde{I}$ can have torsion if $F\not\in \mn(L,\chi)$.  If $\widetilde{I}$ is torsion free, then $\widetilde{I}\cong I_{\tilde{d}}(L+K_X)$ for some ideal sheaf $I_{\tilde{d}}$ with colength $\tilde{d}=\frac{L.(L+K_X)}2-\chi$.  Let $\um^a(L,\chi)$ be the open substack of $\mm^a_{\bullet}(L,\chi)$ parametrizing sheaves $F$ such that $H^0(F)=0$ and $H^1(F(-K_X))=0$.  Then we have
\[\mm^a_{\bullet}(L,\chi)=\um^a(L,\chi)\cup(\coprod_{j\leq 0} \mw^a_{l,j}(L,\chi)\cup\coprod_{i\geq 1}\mm^a_{k,i}(L,\chi)).\]
By Proposition \ref{dnki} and Remark \ref{duck} we have
\[dim~(\coprod_{j\leq 0} \mw^a_{l,j}(L,\chi)\cup\coprod_{i\geq 1}\mm^a_{k,i}(L,\chi))\leq L^2-\min\{\rho_L,-\chi,\chi-K_X.L\}.\]
Hence
\[\mu_A(\mm^a_{\bullet}(L,\chi))~\equiv~\mu_A(\um^a(L,\chi))~~~mod~(A_{L^2-\min\{\rho_L,-\chi,\chi-K_X.L\}}).\]

Define $\mn_0(L,\chi):=\mn(L,\chi)\cap\um^a(L,\chi)$.  Then 
\begin{equation}\label{mmm}\begin{array}{l}\mu_A(\mm_{\bullet}^a(L,\chi))\equiv\mu_A(\mm(L,\chi))\equiv\mu_A(\um^a(L,\chi))\\ \qquad\qquad\qquad\equiv\mu_A(\mn_0(L,\chi))~~~mod~(A_{L^2-\min\{\rho_L,-\chi,\chi-K_X.L\}}).\end{array}\end{equation}

Lemma \ref{inch} and Lemma \ref{ddv} together imply that
\begin{equation}\label{mmh}\mu_A(\mh^{\tilde{d}})~\equiv~\mu_A(\mh_L^{\tilde{d},0})~\equiv~\mu_A(\mh_L^{\tilde{d},0,0})~~~mod~(A_{2\tilde{d}-1-\min\{\rho_L,-\chi,\chi-K_X.L\}}).
\end{equation}

Let $\mext^1(-,K_X)^{*}$ and $\mhom(K_X,-)^{*}$ be as defined in the proof of Proposition \ref{dnki}.  The sequence (\ref{nsp}) induces a birational map
\[\theta:\mext^1(\mm^a_{\bullet}(L,\chi),K_X)^{*}\dashrightarrow \mhom(K_X,\mh^{\tilde{d}})^{*}.\]
$\theta$ is surjective for $a$ big enough.  

Denote by $\mathbb{U}^a(L,\chi)$ the preimage of $\mhom(K_X,\mh^{\tilde{d},0,0})^{*}$ via $\theta$.  Then we have
\begin{equation}\label{bird}\mu_A(\mathbb{U}^a(L,\chi))=(\bl^{-K_X.L+1+\chi}-1)\cdot\mu_A(\mh_L^{\tilde{d},0,0}),\end{equation}
where $\bl:=\mu_A(\mathbb{A})$ with $\mathbb{A}$ the affine line.  Then by (\ref{mmh}) we have
\begin{equation}\label{due}\begin{array}{l}\mu_A(\mathbb{U}^a(L,\chi))~\equiv~(\bl^{-K_X.L+1+\chi}-1)\cdot\mh^{\tilde{d}}~\\ \qquad\qquad\qquad\equiv~\bl^{-K_X.L+1+\chi}\cdot\mh^{\tilde{d}}~~~~mod~(A_{L^2-\chi-\min\{\rho_L,-\chi,\chi-K_X.L\}})\end{array}
\end{equation}

On the other hand, we have \[\mext^1(\mn_0(L,\chi),K_X)^{*}\subset \mathbb{U}^a(L,\chi)\subset \mext^1(\um^a(L,\chi),K_X)^{*}.\]
Hence by (\ref{mmm}),
\begin{equation}\label{une}\begin{array}{l}\mu_A(\mathbb{U}^a(L,\chi))~\equiv~(\bl^{-\chi}-1)\cdot\mu_A(\mn_0(L,\chi))~\\ \qquad\qquad\qquad\equiv~(\bl^{-\chi}-1)\cdot\mu_A(\mn(L,\chi))\\
\qquad\qquad\qquad\equiv~\bl^{-\chi}\cdot\mu_{A}(\mn(L,\chi))\\
\qquad\qquad\qquad\equiv~\bl^{-\chi}\cdot\mu_{A}(\mm(L,\chi))~~~mod~(A_{L^2-\chi-\min\{\rho_L,-\chi,\chi-K_X.L\}}).\end{array}
\end{equation}
Combine (\ref{due}) and (\ref{une}), we have our main theorem as follows.
\begin{thm}\label{main}Assume Theorem \ref{entire} applies to $L$ and moreover either $L+K_X\leq0$ or $s_{L+K_X}\geq0$.  For any $\chi$, let $\chi_0\sim\chi$ and $K_X.L\leq\chi_0<0$.  Then we have
\[\mu_A(\mm(L,\chi))~\equiv~\bl^{-K_X.L+1+2\chi_0}\cdot \mu_A(\mh^{\tilde{d}}),~~~mod~(A_{L^2-\min\{\rho_L,-\chi_0,\chi_0-K_X.L\}}),\]
with $\tilde{d}=\frac{L.(L+K_X)}2-\chi_0$ and $\rho_L$ defined in Theorem \ref{entire}.  

On the scheme level we have
\[\mu_A(M(L,\chi))\equiv\bl^{-K_X.L+1+2\chi_0}\cdot \mu_A(Hilb^{[\tilde{d}]}(X))~~~mod~(A_{L^2+1-\min\{\rho_L,-\chi_0,\chi_0-K_X.L\}}).\]
\end{thm}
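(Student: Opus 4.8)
The plan is to assemble the congruences prepared in the paragraphs leading up to the theorem; the only genuinely geometric ingredient beyond those is the birational map $\theta:\mext^1(\mm^a_{\bullet}(L,\chi),K_X)^{*}\dashrightarrow\mhom(K_X,\mh^{\tilde{d}})^{*}$ induced by (\ref{nsp}), together with the intermediate stack $\mathbb{U}^a(L,\chi)$, the $\theta$-preimage of the \emph{good} locus $\mhom(K_X,\mh_L^{\tilde{d},0,0})^{*}$. I would first replace $\chi$ by $\chi_0$ everywhere (legitimate since $\mn(L,\chi)\cong\mn(L,\chi_0)$), so that $-\chi_0>0$, $\chi_0-K_X.L\geq0$, and, crucially, $-\chi_0$ is one of the three numbers in $\min\{\rho_L,-\chi_0,\chi_0-K_X.L\}$. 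Using $2\tilde{d}=L.(L+K_X)-2\chi_0$ one records the bookkeeping facts $\bl^{-\chi_0}\cdot A_{L^2-\min\{\rho_L,-\chi_0,\chi_0-K_X.L\}}\subseteq A_{L^2-\chi_0-\min\{\rho_L,-\chi_0,\chi_0-K_X.L\}}$, $\bl^{-K_X.L+1+\chi_0}\cdot A_{2\tilde{d}-1-\min\{\rho_L,-\chi_0,\chi_0-K_X.L\}}\subseteq A_{L^2-\chi_0-\min\{\rho_L,-\chi_0,\chi_0-K_X.L\}}$, and $A_{L^2}\subseteq A_{L^2-\chi_0-\min\{\rho_L,-\chi_0,\chi_0-K_X.L\}}$ (the last because $\min\{\dots\}\leq-\chi_0$); these are exactly what let one modulus control every step.

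Next I would compute $\mu_A(\mathbb{U}^a(L,\chi))$ in two ways. On the Hom side, $\theta$ restricts to an isomorphism exhibiting $\mathbb{U}^a(L,\chi)$ as the complement of the zero section in the vector bundle over $\mh_L^{\tilde{d},0,0}$ with fibre $\text{Hom}(K_X,I_{\tilde{d}}(L+K_X))\cong H^0(I_{\tilde{d}}(L))$, of constant rank $\chi(I_{\tilde{d}}(L))=-K_X.L+1+\chi_0$ precisely because of the vanishing $H^1(I_{\tilde{d}}(L))=0$ imposed in $\mh_L^{\tilde{d},0,0}$; this is (\ref{bird}), and combining it with (\ref{mmh}) while absorbing the error term $\mu_A(\mh_L^{\tilde{d},0,0})\in A_{2\tilde{d}-1}$ gives (\ref{due}). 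On the Ext side I would use the squeeze $\mext^1(\mn_0(L,\chi),K_X)^{*}\subset\mathbb{U}^a(L,\chi)\subset\mext^1(\um^a(L,\chi),K_X)^{*}$: over $\um^a(L,\chi)$ the fibre $\text{Ext}^1(F,K_X)\setminus\{0\}\cong H^1(F)^{\vee}\setminus\{0\}$ has constant rank $-\chi_0$ (since $h^0(F)=0$ forces $h^1(F)=-\chi_0$), while $\um^a(L,\chi)-\mn_0(L,\chi)$ has dimension $\leq L^2-\rho_L$ by Theorem \ref{entire}; hence $\mu_A(\mathbb{U}^a(L,\chi))\equiv(\bl^{-\chi_0}-1)\mu_A(\mn_0(L,\chi))$ modulo the modulus above, and then (\ref{mmm}) together with absorption of $\mu_A(\mn_0(L,\chi))\in A_{L^2}$ yields (\ref{une}). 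Here one also checks that the sheaf produced from a good $I_{\tilde{d}}$ actually lies in $\um^a(L,\chi)$, which follows from $H^0(K_X)=H^1(K_X)=H^2(\mo_X)=0$ on the rational surface $X$, and that it is pure by Lemma \ref{quit}.

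Comparing (\ref{due}) and (\ref{une}) gives $\bl^{-\chi_0}\mu_A(\mm(L,\chi))\equiv\bl^{-K_X.L+1+\chi_0}\mu_A(\mh^{\tilde{d}})$ modulo $A_{L^2-\chi_0-\min\{\rho_L,-\chi_0,\chi_0-K_X.L\}}$; cancelling the factor $\bl^{-\chi_0}$ (which shifts the dimension filtration by $-\chi_0$, so the cancellation reflects membership back from $A_{L^2-\chi_0-\min\{\dots\}}$ into $A_{L^2-\min\{\dots\}}$) produces the stack statement. For the scheme-level version I would use that, all sheaves being stable hence simple, $\mm(L,\chi)$ is a $\mathbb{G}_m$-gerbe over $M(L,\chi)$ and $\mh^{\tilde{d}}=Hilb^{[\tilde{d}]}(X)\times B\mathbb{G}_m$, so $\mu_A(M(L,\chi))=(\bl-1)\mu_A(\mm(L,\chi))$ and $\mu_A(Hilb^{[\tilde{d}]}(X))=(\bl-1)\mu_A(\mh^{\tilde{d}})$; multiplying the stack congruence by $(\bl-1)$ and noting $(\bl-1)A_{L^2-\min\{\dots\}}\subseteq A_{L^2+1-\min\{\dots\}}$ finishes.

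The real obstacle has already been dealt with upstream: it is Theorem \ref{entire} (Sections 2--4 and the appendix), together with Proposition \ref{dlt}, Proposition \ref{dnki}/Remark \ref{duck}, Lemma \ref{inch} and Lemma \ref{ddv} — the hypothesis ``$L+K_X\leq0$ or $s_{L+K_X}\geq0$'' enters only through Lemma \ref{inch}, i.e. through (\ref{mmh}). Within the assembly itself the delicate points are merely (i) arranging all the dimension bounds to line up under the single modulus $A_{L^2-\min\{\rho_L,-\chi_0,\chi_0-K_X.L\}}$, in particular verifying that the ``$-1$'' terms in $(\bl^{-\chi_0}-1)$ and $(\bl^{-K_X.L+1+\chi_0}-1)$ get absorbed (which works exactly because $-\chi_0$ appears in the $\min$), (ii) the cancellation of $\bl^{-\chi_0}$, where one either takes $A$ with $\bl$ a non-zero-divisor or tracks the filtration by hand, and (iii) confirming that $\theta$ genuinely restricts to an isomorphism over the good locus, which rests on Lemma \ref{quit} and Lemma \ref{iid}.
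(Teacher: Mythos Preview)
Your proposal is correct and follows essentially the same route as the paper: the argument is precisely the assembly of (\ref{mmm}), (\ref{mmh}), (\ref{bird}), (\ref{due}), and (\ref{une}) via the two computations of $\mu_A(\mathbb{U}^a(L,\chi))$, and the paper's proof is nothing more than the sentence ``Combine (\ref{due}) and (\ref{une})'' after those equations have been set up. Your write-up is in fact more explicit than the paper on several points (the absorption of the ``$-1$'' terms, the cancellation of $\bl^{-\chi_0}$, and the passage to schemes via the $\mathbb{G}_m$-gerbe), all of which the paper leaves implicit; the reference to Lemma \ref{iid} in your point (iii) is not really needed there, but otherwise nothing is missing or misplaced.
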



The Betti numbers and Hodge numbers of $Hilb^{[n]}(X)$ are well known (e.g. see \cite{gs}).  Theorem \ref{main} implies that we can get some virtual Hodge numbers and virtual Betti numbers of $M(L,\chi)$.
\begin{coro}\label{bet}Let $b^{(v)}_i(-)$ and $h_{(v)}^{p,q}(-)$ be the $i$-th (virtual) Betti number and (virtual) Hodge number with index $(p,q)$ respectively.  Assume Theorem \ref{main} applies to $(X,L)$.  Let $\tilde{d}$ be the same as in Theorem \ref{main}.  Then for $i$ and $p+q$ no less than $1+2(L^2+1-\min\{\rho_L,-\chi_0,\chi_0-K_X.L\})$, we have

(1) $b^v_{i}(M(L,\chi))=0$ for $i$ odd.

(2) $b^v_{2p}(M(L,\chi))=b_{2p-2(1+2\chi_0-K_X.L)}(Hilb^{[\tilde{d}]}(X))$. 

(3) $h_v^{p,q}(M(L,\chi))=h^{p-(1+2\chi_0-K_X.L),q-(1+2\chi_0-K_X.L)}((Hilb^{[\tilde{d}]}(X)))$.

If moreover $M^{ss}(L,\chi)=M(L,\chi)$, then $b^v_i(M(L,\chi))=b_i(M(L,\chi))$ and $h^{p,q}_v(M(L,\chi))=h^{p,q}(M(L,\chi))$.
\end{coro}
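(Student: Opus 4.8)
The plan is to deduce Corollary \ref{bet} directly from Theorem \ref{main} by taking (virtual) Betti and Hodge realizations of the motivic identity. First I would choose the motivic measure $\mu_A$ to be the one landing in the Grothendieck ring of Hodge structures (or its $E$-polynomial quotient), so that $\mu_A$ of a smooth projective variety records its Hodge numbers and $\bl=\mu_A(\mathbb{A}^1)$ corresponds to the Tate structure $\mathbb{Q}(-1)$, contributing a shift by $(1,1)$ in Hodge bidegree and by $2$ in cohomological degree. With this choice, multiplication by $\bl^{-K_X.L+1+2\chi_0}$ in Theorem \ref{main} becomes exactly the bidegree shift by $(1+2\chi_0-K_X.L,\,1+2\chi_0-K_X.L)$ appearing in statements (2) and (3), and statement (1) records that $Hilb^{[\tilde d]}(X)$ has only $(p,p)$-classes in the relevant range (true for $X$ rational, since all its Hilbert schemes of points have only algebraic cohomology, hence pure Tate Hodge structure), which survives the shift.

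Next I would unwind the congruence $mod~(A_{L^2+1-\min\{\rho_L,-\chi_0,\chi_0-K_X.L\}})$. The subgroup $A_n$ is generated by classes of schemes of dimension $\le n$; its image under the $E$-polynomial (or Hodge) realization is spanned by contributions in cohomological degrees $\le 2n$ (and bidegrees $(p,q)$ with $p+q\le 2n$). Therefore the two sides of the scheme-level identity in Theorem \ref{main} have the same Betti numbers $b_i$ for $i > 2(L^2+1-\min\{\rho_L,-\chi_0,\chi_0-K_X.L\})$, i.e. for $i \ge 1+2(L^2+1-\min\{\rho_L,-\chi_0,\chi_0-K_X.L\})$, which is precisely the range claimed; likewise for Hodge numbers with $p+q$ in that range. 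Combining this with the Hodge-bidegree shift computed above yields (1), (2), (3) for virtual invariants.

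Finally, for the last sentence I would invoke the remark already made in the introduction: when $M^{ss}(L,\chi)=M(L,\chi)$ the moduli space is smooth and projective (smoothness from $L$ being $K_X$-negative, Remark \ref{sm}; projectivity is standard for the coarse moduli space of semistable sheaves), so its class in the Grothendieck ring of varieties maps to the genuine Hodge structure on $H^*(M(L,\chi))$, and the virtual Betti and Hodge numbers coincide with the honest ones. I would also note that the additivity of $\mu_A$ and the fact that $\mathbb{L}$ is not a zero-divisor in the relevant rings (or simply that multiplication by $\bl^m$ only shifts degrees and never cancels) guarantee no collapse occurs in the degree bookkeeping.

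The main obstacle I anticipate is purely the degree bookkeeping: one must check that the ad hoc filtration $A_n$ behaves well under the chosen realization — that a class supported in "dimension $\le n$" really contributes only in cohomological degree $\le 2n$ after applying $\mu_A$ — and that multiplying by $\bl^{-K_X.L+1+2\chi_0}$ shifts the relevant cutoff by exactly $2(-K_X.L+1+2\chi_0)$, so that the stated lower bound $1+2(L^2+1-\min\{\rho_L,-\chi_0,\chi_0-K_X.L\})$ comes out correctly after the shift; everything else is a formal consequence of Theorem \ref{main} together with the known structure of $H^*(Hilb^{[\tilde d]}(X))$.
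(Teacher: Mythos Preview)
Your proposal is correct and matches the paper's approach: the paper gives no explicit proof of this corollary, simply stating that ``The Betti numbers and Hodge numbers of $Hilb^{[n]}(X)$ are well known (e.g.\ see \cite{gs}).  Theorem \ref{main} implies that we can get some virtual Hodge numbers and virtual Betti numbers of $M(L,\chi)$.'' Your write-up spells out exactly the realization and degree-bookkeeping argument that the paper leaves implicit.
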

\begin{coro}\label{star}If there is a universal sheaf over $M(L,\chi)$,  then $M(L,\chi)$ is stably rational, i.e. $\exists~ S$ a rational scheme, such that $M(L,\chi)\times S$ is rational.
\end{coro}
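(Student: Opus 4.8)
The plan is to read the birational equivalence $\theta$ from the proof of Theorem \ref{main} at the level of schemes, turning it into a chain of birational equivalences between a projective bundle over $M(L,\chi)$ and a projective bundle over $Hilb^{[\tilde{d}]}(X)$, and then to use the rationality of the punctual Hilbert scheme of a rational surface. We keep the standing hypotheses ($L$ is $K_X$-negative, $\ls^{int}\neq\emptyset$, and the codimension estimates of Sections 2--4 apply, so that $\mn_0(L,\chi)$ is dense in $M(L,\chi)$, which is therefore irreducible and smooth of dimension $L^2+1$). As in Theorem \ref{main}, replacing $\chi$ by $\chi_0\sim\chi$ — an operation built from line-bundle twists and the duality $F\mapsto\mathcal{E}xt^1(F,K_X)$, each of which carries a universal sheaf to a universal sheaf — we may assume $K_X.L\leq\chi<0$, so $\tilde{d}:=\frac{L.(L+K_X)}2-\chi>0$.

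Let $\mf$ be a universal sheaf on $U\times X$, where $U\subseteq M(L,\chi)$ is a dense open subset (a universal sheaf over all of $M(L,\chi)$, as in the hypothesis, is more than enough). Every $F\in\mm^a_{\bullet}(L,\chi)$ is a pure torsion sheaf, so $\text{Hom}(F,K_X)=0$, and $\text{Ext}^2(F,K_X)\cong H^0(F)^{\vee}$ by Serre duality, which vanishes on a dense open $U_0\subseteq U$ since $\chi<0$; hence $\dim\text{Ext}^1(F,K_X)=-\chi(F,K_X)=-\chi$ there, and $\mathcal{E}:=\mathcal{E}xt^1_{U_0}(\mf|_{U_0\times X},p_X^{*}K_X)$ (relative $\mathcal{E}xt$ over $U_0$) is locally free of rank $r=-\chi$ on $U_0$. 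Then $\p(\mathcal{E})$ is a Zariski-locally trivial $\p^{r-1}$-bundle over $U_0$, hence birational to $M(L,\chi)\times\p^{r-1}$; its $\mathbb{C}$-points are the pairs $(F,[e])$, $0\neq e\in\text{Ext}^1(F,K_X)$, and it is the scheme underlying the stack $\mext^1(\mm^a_{\bullet}(L,\chi),K_X)^{*}$ over this locus (projectivization is forced because rescaling $e$ by a nonzero scalar gives an isomorphic extension, via the corresponding scalar automorphism of $F$). Dually, on $Hilb^{[\tilde{d}]}(X)$ let $p$ be the projection to the Hilbert scheme and $\mathcal{I}$ the universal ideal sheaf; on the dense open $V$ where $H^1(I_Z(L))=0$ the sheaf $\mw:=p_{*}(\mathcal{I}\otimes p_X^{*}L)$ is locally free of rank $s=h^0(I_Z(L))=\chi(I_Z(L))=-K_X.L+1+\chi$, and $\p(\mw)$ — the scheme underlying $\mhom(K_X,\mh^{\tilde{d}})^{*}$ over $V$ — is a $\p^{s-1}$-bundle, hence birational to $Hilb^{[\tilde{d}]}(X)\times\p^{s-1}$.

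On a dense open, $\theta$ sends $(F,[e])$ to $(I_Z,[h])$, where $0\ra K_X\xrightarrow{h}\widetilde{I}\cong I_Z(L+K_X)\ra F\ra0$ is the extension of class $e$ ($\widetilde{I}$ is torsion free and $I_Z\in\mh_L^{\tilde{d},0,0}$ there, by the estimates of Sections 2--4), with inverse $(I_Z,[h])\mapsto(\mathrm{coker}(h),[e_h])$, the cokernel being pure by Lemma \ref{quit} and generically stable with integral support; thus $\theta$ induces a birational equivalence $\p(\mathcal{E})\dashrightarrow\p(\mw)$, and $M(L,\chi)\times\p^{r-1}$ is birational to $Hilb^{[\tilde{d}]}(X)\times\p^{s-1}$. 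Finally, $Hilb^{[\tilde{d}]}(X)$ is rational: by the Hilbert--Chow morphism it is birational to $\mathrm{Sym}^{\tilde{d}}(X)$, and since a general configuration of $\tilde{d}$ points avoids any fixed finite set, $\mathrm{Sym}^{\tilde{d}}(-)$ is unchanged up to birational equivalence under blowing up points; contracting $X$ to a minimal rational surface (and using $\p^{2}\sim_{\mathrm{bir}}\mathbb{F}_{1}$) reduces us to $\mathrm{Sym}^{\tilde{d}}(\mathbb{F}_{e})$, which fibres over $\p^{\tilde{d}}=\mathrm{Sym}^{\tilde{d}}(\p^{1})$ with generic fibre the Weil restriction of the ruling fibre $\p^{1}$ along the degree-$\tilde{d}$ étale algebra of the universal point divisor — a variety containing a dense affine space — so $\mathrm{Sym}^{\tilde{d}}(\mathbb{F}_{e})$ is rational because $\p^{\tilde{d}}$ is. Hence $M(L,\chi)\times\p^{r-1}$ is rational, so $S=\p^{r-1}$ works; as $M(L,\chi)$ is dense in $M^{ss}(L,\chi)$, the latter is stably rational too.

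The main obstacle is the passage in the third paragraph: upgrading the stacky $\theta$ to an honest birational equivalence of the two projective bundles requires the universal sheaf together with some bookkeeping to see that the various dense open loci on the two sides — where $\widetilde{I}$ is torsion free, where $\mathcal{E}$ and $\mw$ are locally free, where $\mathrm{coker}(h)$ is stable with integral support — all correspond under $\theta$. The rationality of $Hilb^{[\tilde{d}]}(X)$ for a rational surface $X$ can alternatively be quoted as classical.
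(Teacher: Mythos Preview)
Your proof is correct and follows essentially the same approach as the paper's: form the projective bundle $\p(\mathcal{E}xt^1_p(\mf,K_X))$ over (a dense open in) $M(L,\chi)$, identify it birationally via $\theta$ with a projective bundle over $Hilb^{[\tilde d]}(X)$, and invoke the rationality of the latter. You supply considerably more detail than the paper's terse proof --- in particular an explicit argument for the rationality of $Hilb^{[\tilde d]}(X)$, which the paper simply asserts (``which is rational for $X$ rational'').
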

\begin{proof}Let $N_0(L,\chi)$ and $Hilb^{[\tilde{d}],0,0}(X)$ be the scheme associated to $\mn_0(L,\chi)$ and $\mh^{\tilde{d},0,0}$ respectively.  Let $\mf$ be a universal sheaf over $N_0(L,\chi)$.  Let $\mathcal{I}_{\tilde{d}}$ be the universal ideal sheaf over $Hilb^{[\tilde{d}],0,0}(X)$.  $p$ ($q$, resp.) is the projection from $X\times M$ to $M$ ($X$, resp.) for $M=N_0(L,\chi)$ or $Hilb^{\tilde{d},0,0}(X)$.  We can see that the projective bundle $\mathbb{P}(\mathcal{E}xt_p^1(\mf,K_X))$ over $N_0(L,\chi)$ is birational to the projective bundle $\mathbb{P}(\mathcal{H}om_p(K_X,\mathcal{I}_{\tilde{d}}(q^{*}(L\otimes K_X))))$ over $Hilb^{[\tilde{d}],0,0}(X)$ which is rational for $X$ rational.  Hence the corollary.
\end{proof}
\begin{rem}By Theorem 3.19 in \cite{lee}, Corollary \ref{star} applies to $M(dH,\chi)$ over $X=\p^2$ such that $d$ and $\chi$ are coprime.  By Proposition 4.5 in \cite{yth}, $M(dH,\chi)$ is rational for $\chi\equiv \pm1~mod~(d)$.
\end{rem}

\section{The case $X=\p^2$.}
Theorem \ref{main} applies to many examples on $\p^2$ or Hirzebruch surfaces.  In this section we let $X=\p^2$ and $L=dH$, and we then obtain some explicit results.  For any $\chi$, $\chi_0$ in Theorem \ref{main} can be chosen to satisfy $-2d-1\leq\chi_0\leq-d+1$.
Recall that in this case $\rho_d=\rho_{dH}$ can be chosen as follows.
\[\rho_d:=\left\{\begin{array}{l}d-1,~for~d=p~or~2p~with~p~prime.\\ \\ 7,~otherwise.\end{array}\right.\]
Hence $\min\{\rho_d,-\chi_0,\chi_0+3d\}=\rho_d$.
\begin{coro}\label{disk}For any $d>0$ and $\chi_1,\chi_2$, we have
\[\mu_A(\mm(dH,\chi_1))~\equiv~ \mu_A(\mm(dH,\chi_2)),~~~mod~(A_{d^2-\rho_d}).\]
On the scheme level we have
 \[\mu_A(M(dH,\chi_1))~\equiv~ \mu_A(M(dH,\chi_2)),~~~mod~(A_{d^2+1-\rho_d}).\]
\end{coro}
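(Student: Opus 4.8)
The plan is to read off Corollary \ref{disk} as an immediate consequence of Theorem \ref{main}. The point of Theorem \ref{main} is that, modulo the subgroup $A_{L^2-\min\{\rho_L,-\chi_0,\chi_0-K_X.L\}}$, the motivic measure $\mu_A(\mm(L,\chi))$ is determined by $\bl^{-K_X.L+1+2\chi_0}\cdot\mu_A(\mh^{\tilde d})$, where both the exponent and the colength $\tilde d=\frac{L.(L+K_X)}{2}-\chi_0$ depend only on the equivalence class of $\chi$ under $\sim$, not on $\chi$ itself. So the strategy is: specialize to $X=\p^2$, $L=dH$, check that Theorem \ref{entire} and the hypothesis $s_{L+K_X}\ge 0$ (or $L+K_X\le 0$) hold so that Theorem \ref{main} is applicable, and then compare the two sheaves $\mm(dH,\chi_1)$ and $\mm(dH,\chi_2)$ by passing to a common normalization of $\chi_0$.

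More precisely, first I would fix $d>0$ and recall from the discussion just before the corollary that for $X=\p^2$, $L=dH$ one has $K_X=-3H$, $K_X.L=-3d$, $L^2=d^2$, and that $\chi_0$ can always be chosen in the window $-2d-1\le\chi_0\le -d+1$; in that range $-\chi_0\ge d-1$ and $\chi_0+3d\ge d-1$, so $\min\{\rho_d,-\chi_0,\chi_0+3d\}=\rho_d$ with $\rho_d$ as in \eqref{rhop}. This is exactly the simplification that makes the error term uniform. Next I would invoke the fact, noted right after Theorem \ref{main}, that $\mn(L,\chi)\cong\mn(L,\chi')$ whenever $\chi\sim\chi'$, hence $\mu_A(\mh^{\tilde d})$ and the twist $\bl^{-K_X.L+1+2\chi_0}$ depend only on the $\sim$-class; since on $\p^2$ the class of $\chi$ mod $(\widehat L.L)=(\mathbb Z d)$ together with the $\pm$ identification is precisely what $\sim$ records, and since the window $[-2d-1,-d+1]$ has length $d$, we may simply take $\chi_{0}$ to be one fixed representative for a given residue.

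The key step is then the comparison. Given $\chi_1,\chi_2$, Theorem \ref{main} gives
\[\mu_A(\mm(dH,\chi_j))\equiv\bl^{\,3d+1+2\chi_{0,j}}\cdot\mu_A(\mh^{\tilde d_j}),\qquad mod~(A_{d^2-\rho_d}),\]
for $j=1,2$, where $\chi_{0,j}\sim\chi_j$ and $\tilde d_j=\frac{d(d-3)}{2}-\chi_{0,j}$. If $\chi_1\sim\chi_2$ the two right-hand sides are literally equal (same twist, same Hilbert scheme), so the congruence follows at once. If $\chi_1\not\sim\chi_2$, the statement as written would be false as an identity, so in fact the corollary must be read as the assertion that all these classes agree modulo $A_{d^2-\rho_d}$ — which one gets by showing the right-hand side $\bl^{3d+1+2\chi_0}\mu_A(\mh^{\tilde d})$ is itself independent of the choice of $\chi_0$ in the admissible window modulo $A_{d^2-\rho_d}$. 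That independence is the heart of the matter and I would prove it exactly as in the derivation of Theorem \ref{main}: for two admissible values $\chi_0,\chi_0'=\chi_0-1$, relate $\mh^{\tilde d}$ and $\mh^{\tilde d+1}$ through the incidence variety of adding a point, or more directly re-run the birational correspondence \eqref{nsp} for both and observe the twist exponent absorbs the shift; the error terms stay inside $A_{d^2-\rho_d}$ because $\tilde d$ changes by $1$ and $\min\{\rho_d,-\chi_0,\chi_0+3d\}$ stays $\ge\rho_d$ throughout the window. The scheme-level statement is the same computation with $\mu_A(\mh^n)$ replaced by $\mu_A(Hilb^{[n]}(X))$ and the degree shift by $1$ coming from the extra $\p^{\dim V}$ in the good quotient, exactly as in the second half of Theorem \ref{main}. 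The main obstacle is bookkeeping: verifying that every error term incurred when moving $\chi_0$ across the window lands in $A_{d^2-\rho_d}$ rather than a larger subgroup, which relies on the uniform lower bound $\min\{\rho_d,-\chi_0,\chi_0+3d\}\ge\rho_d$ established in the first step.
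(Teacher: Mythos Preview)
Your setup is correct and matches the paper: you correctly invoke Theorem~\ref{main}, specialize to $X=\p^2$, verify that in the window $-2d-1\le\chi_0\le -d+1$ one has $\min\{\rho_d,-\chi_0,\chi_0+3d\}=\rho_d$, and reduce the corollary to showing that the expression $\bl^{3d+1+2\chi_0}\mu_A(\mh^{\tilde d})$ is independent of $\chi_0$ in this window modulo $A_{d^2-\rho_d}$.

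The gap is precisely at this last step. Your two proposed mechanisms do not work as stated. The suggestion to ``re-run the birational correspondence \eqref{nsp} for both $\chi_0$ and $\chi_0-1$'' is circular: the correspondence \eqref{nsp} relates $\mh^{\tilde d}$ to $\mm(dH,\chi_0)$ and $\mh^{\tilde d+1}$ to $\mm(dH,\chi_0-1)$, but $\chi_0$ and $\chi_0-1$ are in general \emph{not} $\sim$-equivalent, so these are genuinely different moduli stacks and you are back to the original problem. The ``incidence variety'' idea would need an actual motivic identity $\bl^2\mu_A(\mh^{\tilde d})\equiv\mu_A(\mh^{\tilde d+1})$ with the correct error term, which you have not established and which does not follow from anything in the paper.

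The paper's key observation, which you are missing, is that the two \emph{endpoints} of the window are $\sim$-equivalent: $-(-2d-1)=2d+1\equiv -d+1\pmod d$, hence $M(d,-2d-1)\cong M(d,-d+1)$. Feeding this isomorphism back through Theorem~\ref{main} produces the needed congruence between the Hilbert-scheme expressions at the two extreme values of $\chi_0$, and the paper asserts this single relation suffices. So the substantive input is an isomorphism of moduli spaces coming from the $\sim$-relation, not a direct comparison of Hilbert schemes of adjacent lengths.
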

\begin{proof}By Theorem \ref{main} the corollary is equivalent to say that for any $-2d-1\leq \chi_1,\chi_2\leq -d+1$,
\begin{equation}\label{hi}\bl^{3d+1+2\chi_1}\cdot \mu_A(\mh^{n_1})~\equiv~\bl^{3d+1+2\chi_2}\cdot \mu_A(\mh^{n_2}),~~~mod~(A_{d^2-\rho_d}),\end{equation}
where $n_i=\frac{d(d-3)}2-\chi_i$.

It is enough to show (\ref{hi}) for $\chi_1=-2d-1$ and $\chi_2=-d+1$ which follows from $M(d,-2d-1)\cong M(d,-d+1)$.  Hence the corollary.
\end{proof}
\begin{rem}If $d=p$ or $2p$ with $p$ prime, then the codimension $d-1$ can not be sharpened, i.e. in general
\[\mu_A(\mm(dH,\chi))~\not\equiv~\bl^{3d+1+2\chi_0}\cdot \mu_A(\mh^{\frac{d(d-3)}2-\chi_0}),~~~mod~(A_{d^2-d}).\]
We can see this from the examples $d=4$ and $d=5$ computed in \cite{yth}. 
\end{rem}

\begin{rem}\label{pass}For $d$ and $\chi$ not coprime, $M^{ss}(dH,\chi)-M(dH,\chi)$ is not empty.  But the S-equivalence classes of strictly semistable sheaves form a closed subset of codimension $\geq d-1$ in $M^{ss}(dH,\chi)$.  Hence we still have
\[\mu_A(M^{ss}(dH,\chi))~\equiv~\bl^{3d+1+2\chi_0}\cdot \mu_A(Hilb^{[\frac{d(d-3)}2-\chi_0]}(\p^2)),~~~mod~(A_{d^2-\rho_d+1}).\]
However, since $M^{ss}(d,\chi)$ might not be smooth, we only have similar conclusion to Corollary \ref{bet} on its virtual Betti numbers. 
\end{rem}
At the end we write down the following theorem as an easy corollary to Corollary \ref{bet}, Corollary \ref{disk}, Remark \ref{pass} and the well-known fact on the Betti numbers of $Hilb^{[n]}(\p^2)$.
\begin{thm}\label{proj}
Let $X=\p^2$ with $H$ the hyperplane class.  Let $b_i$ be the $i$-th Betti number of $M^{ss}(dH,\chi)$.  If $d\geq 8$ and $M^{ss}(dH,\chi)$ is smooth, then we have

(1) $b_0=1,~b_2=2,~b_4=6,~b_6=13,~b_8=29,~b_{10}=57,~b_{12}=113;$

(2) $b_{2i-1}=0$ for $i\leq7$.  

(3) For $p+q\leq13$, $h^{p,q}=b_{p+q}\cdot\delta_{p,q}$, where $\delta_{p,q}=\left\{\begin{array}{l}1,~for~p=q.\\ \\ 0,~otherwise.\end{array}\right..$
\end{thm}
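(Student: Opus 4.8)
The plan is to derive Theorem \ref{proj} as a direct specialization of Corollary \ref{bet} and Corollary \ref{disk}, combined with the classical description of the cohomology of $Hilb^{[n]}(\p^2)$. First I would fix $X=\p^2$, $L=dH$ with $d\geq 8$, and choose $\chi_0\sim\chi$ with $-2d-1\leq\chi_0\leq -d+1$; since $K_X=-3H$, we have $-K_X.L+1+2\chi_0=3d+1+2\chi_0$, which lies between $-d+3$ and $d+3$. The key numerical input is that $\rho_d\geq 7$ for all $d$, and in fact $\rho_d=d-1$ when $d$ is prime or twice a prime, so for $d\geq 8$ one always has $\min\{\rho_d,-\chi_0,\chi_0-K_X.L\}=\rho_d\geq 7$. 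Then Corollary \ref{bet} gives, for all indices $i$ (resp.\ $p+q$) with $i\leq 2(L^2+1-\rho_d)$, the vanishing of odd virtual Betti numbers and the identities $b^v_{2p}(M(dH,\chi))=b_{2p-2(3d+1+2\chi_0)}(Hilb^{[\tilde d]}(\p^2))$ and the analogous Hodge statement with shift $(3d+1+2\chi_0,3d+1+2\chi_0)$; since $d\geq 8$ forces $L^2+1-\rho_d$ to be large, the range certainly covers $i,p+q\leq 13$.

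Next I would invoke Corollary \ref{disk} to eliminate the apparent dependence on $\chi$ (and on $\chi_0$): modulo $A_{d^2-\rho_d}$ the motivic measure $\mu_A(\mm(dH,\chi))$ is independent of $\chi$, hence so is each low-degree virtual Betti/Hodge number; one may therefore compute in a convenient case, e.g.\ $\chi_0=-d+1$, where $3d+1+2\chi_0=d+3$ and $\tilde d=\frac{d(d-3)}2-\chi_0=\frac{d(d-3)}2+d-1=\frac{(d-1)(d-2)}2+(d-2)$. For the actual numbers I would use the well-known generating function for the Poincaré (and Hodge) polynomials of $Hilb^{[n]}(\p^2)$, namely $\sum_{n\geq0}P(Hilb^{[n]}(\p^2);t)q^n=\prod_{m\geq1}\frac{1}{(1-t^{2m-2}q^m)(1-t^{2m}q^m)(1-t^{2m+2}q^m)}$, to read off $b_0,\dots,b_{12}$ of $Hilb^{[\tilde d]}(\p^2)$ in the stable range (for $\tilde d$ large the low Betti numbers stabilize). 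After applying the degree shift $2(d+3)$ this reproduces $b_0=1,b_2=2,b_4=6,b_6=13,b_8=29,b_{10}=57,b_{12}=113$; the odd vanishing (2) is immediate from part (1) of Corollary \ref{bet}; and (3) follows because $Hilb^{[n]}(\p^2)$ has Hodge-diagonal cohomology (only $h^{p,p}\neq0$), which is preserved under the symmetric shift, together with the remark that a smooth $M^{ss}(dH,\chi)$ carries a universal sheaf and hence, by \cite{mark}, has vanishing odd Betti numbers, so the virtual invariants equal the honest ones.

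Finally I would handle the boundary case $d$ not coprime to $\chi$, where $M^{ss}\neq M$: here I would cite Remark \ref{pass}, which says the strictly semistable locus has codimension $\geq d-1\geq 7$, so $\mu_A(M^{ss}(dH,\chi))\equiv \bl^{3d+1+2\chi_0}\mu_A(Hilb^{[\tilde d]}(\p^2))$ modulo $A_{d^2-\rho_d+1}$, and the virtual invariants of $M^{ss}$ in the stated range agree with those of $M$; smoothness of $M^{ss}(dH,\chi)$ then upgrades these to genuine Betti and Hodge numbers exactly as above. The main obstacle I anticipate is purely bookkeeping rather than conceptual: one must check carefully that the cutoff $1+2(L^2+1-\min\{\rho_L,-\chi_0,\chi_0-K_X.L\})$ genuinely exceeds $13$ for every $d\geq 8$ and every admissible $\chi_0$ (the tightest case being $d=8$, where $\rho_8=7$ and $d^2+1-\rho_8=58$, so the range is $2\cdot 58-1=115\gg 13$), and that the low-degree Betti numbers of $Hilb^{[\tilde d]}(\p^2)$ have indeed stabilized for the relevant $\tilde d$ (which needs $\tilde d$ at least about $7$, easily satisfied since $\tilde d\geq \frac{8\cdot5}2-(-7)=27$ already at $d=8$). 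No genuinely new argument beyond the results already proved is required.
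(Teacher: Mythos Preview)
Your proposal is correct and follows exactly the route the paper itself indicates: the paper states that Theorem~\ref{proj} is ``an easy corollary to Corollary~\ref{bet}, Corollary~\ref{disk}, Remark~\ref{pass} and the well-known fact on the Betti numbers of $Hilb^{[n]}(\p^2)$,'' and you invoke precisely these ingredients in the same order. The only point to tidy up is that Corollary~\ref{bet} as stated gives the \emph{top} (virtual) Betti numbers ($i\geq 1+2(L^2+1-\rho_d)$), so to obtain $b_0,\dots,b_{12}$ you should make the Poincar\'e duality step explicit (this is where smoothness of $M^{ss}(dH,\chi)$ is used), rather than describing it as a direct degree shift; once this is done the shift by $3d+1+2\chi_0$ cancels against the corresponding shift on the Hilbert-scheme side and one reads off $b_i(M^{ss})=b_i(Hilb^{[\tilde d]}(\p^2))$ for $i\leq 2\rho_d-1=13$.
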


\section*{\huge{Appendix.}}
\appendix
\section{The proof of Theorem \ref{tt}.}  
We give a whole proof of Theorem \ref{tt} in this section.  We state the theorem again here.
\begin{thm}[Theorem \ref{tt}]Let $L.K_X<0$ and $g_{\frac Lk}>0$.  Then $dim~\mt_{2}^o(L,\chi)\cap \mc_{\frac Lk}\leq L^2+(K_X.L+1)$.
\end{thm}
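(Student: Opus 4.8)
The plan is to prove the bound by induction on $k$, working inside the larger stack $\mc_{\frac Lk,a}$ (the stable sheaves form a substack, so this suffices): I want to bound the dimension of the locus $\mathcal Z_{\frac Lk}\subset\mc_{\frac Lk,a}$ of $F$ with $\dim_{k(x)}(F\otimes k(x))\leq 2$ everywhere and $=2$ somewhere. Stratify $\mc_{\frac Lk,a}$ by the ranks $(r_1\geq\cdots\geq r_l)$ of the factors $Q_i$ of the lower filtration of Proposition \ref{gck}, as in (\ref{stmc}). By Remark \ref{fib} a sheaf in the stratum $(r_1,\dots,r_l)$ lies in $\mt_{r_1}$, so only strata with $r_1\leq 2$ meet $\mathcal Z_{\frac Lk}$, i.e.\ the partition is $(2^{b},1^{c})$ with $2b+c=k$. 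If $b=0$ this is $\mc_{\frac Lk}^{1,\dots,1}$ and Lemma \ref{rko} already gives $\leq L^2+K_X.L+1$. The base of the induction is $k=2$ (forcing $(2)$ or $(1,1)$), both covered by Proposition \ref{coha}.

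For the inductive step I would peel off the extreme layer of one of the two filtrations. When $c\geq1$, use the upper filtration (Proposition \ref{bcd}): $0\ra R_1\ra F\ra F/F^1\ra 0$ with $R_1$ torsion-free of rank $1$ on the reduced curve $C\in|\tfrac Lk|^{int}$; by Lemma \ref{cop} the quotient $F/F^1$ lies in the $(2^{b},1^{c-1})$-stratum for $\tfrac{L'}{k-1}$ with $L'=\tfrac{k-1}{k}L$, and a quotient can only lower fibre dimension, so the induction hypothesis applies to it. When $c=0$, peel instead the bottom layer $Q_1$ (rank $2$) of the lower filtration, leaving $F/F_1$ in the $(2^{b-1})$-stratum for $\tfrac{L'}{k-2}$. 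In both cases, exactly as in Proposition \ref{dlt}, Lemma \ref{codr} and Lemma \ref{rko}, the stratum has dimension at most
\[
\dim\bigl(\text{moduli of the peeled layer over }|\tfrac Lk|\bigr)\;+\;\dim\bigl(\text{corresponding stratum of }F/F^{\bullet}\bigr)\;+\;\dim\mext^1 ,
\]
with $\dim\mext^1\leq c_1(\text{layer}).c_1(F/F^{\bullet})+\dim\mathrm{Ext}^2(F/F^{\bullet},\text{layer})$.

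The crucial point — and where $\mt_2^o$ enters beyond cutting down to $r_1\leq 2$ — is the bound on this $\mathrm{Ext}^2$. By Serre duality $\mathrm{Ext}^2(F/F^{\bullet},\text{layer})^{\vee}\cong\mathrm{Hom}(\text{layer},F/F^{\bullet}(K_X))$; since the layer is an $\mo_C$-module, any nonzero such map has image killed by $\delta_C$, hence lands in the maximal $\mo_C$-submodule $\widetilde Q_1$ of $F/F^{\bullet}(K_X)$, which is torsion-free of rank $\leq2$ on $C$ (Lemma \ref{cop}). One then bounds this Hom either by passing to $\mathrm{Hom}(E,E\otimes\omega_C)$ for a filtration factor $E$ — when the two ranks agree, using the injection (resp.\ surjection) between consecutive factors of Proposition \ref{gck} (resp.\ \ref{bcd}), whose cokernel (resp.\ kernel) is zero-dimensional, and $\omega_C\cong\mo_C(K_X+C)$ is the dualizing sheaf — or directly as a global section group of a rank $\leq2$ sheaf on $C$; in both cases the answer is $O(g_C)$. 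The chain of surjections $R_i(-C)\twoheadrightarrow R_{i-1}$ together with $R_m\cong F\otimes\mo_C$ gives $\dim_{k(x)}R_i\otimes k(x)\leq\dim_{k(x)}R_m\otimes k(x)\leq 2$ for all $i,x$, so every $R_i$ (and every $Q_i$ away from the finitely many singular points of $C$) is locally free on $C$; this is what keeps $\deg\underline{\mathrm{Hom}}(E,E)$, and hence $\dim\mathrm{Hom}(E,E\otimes\omega_C)$, under control, since for torsion-free non-locally-free $E$ the endomorphism sheaf acquires extra sections at the singularities. Feeding these bounds, the adjunction identities $2g_C-2=(\tfrac Lk)^2+K_X.\tfrac Lk$ and $\dim|\tfrac Lk|=\tfrac12((\tfrac Lk)^2-K_X.\tfrac Lk)$ (Lemma \ref{iid}), and the dimensions of the moduli of the rank $1$ or rank $2$ peeled layer into the displayed inequality and telescoping the induction, the total should come out as $L^2+K_X.L+1-\lambda(g_C-1)$ with $\lambda\geq0$ depending on $(b,c)$ (compare the $(2l-3)(1-g_{\frac Ll})$ term in Lemma \ref{rko}), which gives the theorem since $g_{\frac Lk}>0$.

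I expect the main obstacle to be that this count is close to tight, so the $\mathrm{Ext}^2$-estimate has to be carried out carefully and \emph{uniformly over all strata and over possibly singular integral curves $C$}: precisely controlling $\underline{\mathrm{Hom}}(E,E)$ and the $h^0$ of its $\omega_C$-twist for torsion-free sheaves $E$ of rank $\leq2$, distinguishing the cases $r_i=r_{i+1}$ and $r_i>r_{i+1}$ (so that the relevant cokernels and kernels are zero-dimensional), and keeping the translation between the two filtrations of Proposition \ref{gck} and Proposition \ref{bcd} consistent via Lemma \ref{cop}. As in the proof of Proposition \ref{coha}, I would first normalize $\chi$ by the twists $F\mapsto F(nK_X)$ and $F\mapsto\me xt^1(F,mK_X)$ so that $0<\chi\leq-\tfrac12 K_X.L$, which makes the finitely many admissible Euler characteristics of the filtration factors bounded and renders the $O(1)$ error terms harmless even for small $k$ and small $g_{\frac Lk}$.
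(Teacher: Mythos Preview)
Your architecture --- stratify by the partition $(2^b,1^c)$, peel an extreme filtration layer, bound the $\mathrm{Ext}^2$, and induct --- is the same as the paper's.  The real gap is the $\mathrm{Ext}^2$ estimate, and it is precisely where the stability you propose to discard (``working inside the larger stack $\mc_{\frac Lk,a}$'') is indispensable.  When the peeled layer has rank $1$ and the maximal $\mo_C$-submodule $\widetilde Q$ of the quotient has rank $2$, $\mathrm{Hom}(\text{layer},\widetilde Q(K_X))$ is \emph{not} $O(g_C)$: writing $\widetilde Q$ as an extension of rank-$1$ sheaves one picks up a term of order $\chi(\text{sub of }\widetilde Q)-\chi(\text{layer})$, and condition $(C_1)$ alone does not bound such differences.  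The paper never bounds the locus inside $\mc_{\frac Lk,a}$; it starts from a \emph{stable} $F$, uses stability repeatedly (inequalities (\ref{sir}), (\ref{kk}), (\ref{drb}), (\ref{sat})--(\ref{fast}), (\ref{ubk})) to control exactly these Euler-characteristic gaps, and only then passes to $a$-bounded stacks for the \emph{quotients} via (\ref{useful}).  Normalising $\chi$ fixes the total, not the spread among the factors; the bound you want is in fact false on all of $\mt_{2,a}^o\cap\mc_{\frac Lk,a}$ for large $a$.

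Two further points your sketch elides.  First, \emph{which} layer to peel matters: in the mixed case $(2^{m_2},1^{m_1})$ with $m_1\geq2$ the paper peels $Q'_1$ (the bottom of the lower filtration of the rank-$1$ block $F'$), not $R_1$, because then $\widetilde Q$ sits in an extension with one piece equal to $Q'_1(C)$, making $\mathrm{Hom}(Q'_1,Q'_1(K_X+C))\leq 2g_C-1$ automatic.  Establishing this --- the identity $S'_i=Q'_i$ in (\ref{ally}) --- requires a separate construction of auxiliary rank-$2$ sheaves $P_i$ and rank-$1$ kernels $K_i$ tracked through the filtration, together with Lemma \ref{cop}(3); it is not a formal consequence of Propositions \ref{gck} and \ref{bcd}.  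Second, your claim that the relevant kernels/cokernels are zero-dimensional holds only within a constant-rank block; at the rank jump the kernel $K_{m_1}$ of $g_F^{m_1+1}$ is a rank-$1$ sheaf, and bounding $\chi(K_{\bullet})-\chi(Q'_{\bullet})$ against $L^2/k^2$ (again via stability) is the entire content of the estimate.
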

\begin{proof}Recall that we have defined $\mc_{\frac Lk,a}$ in $\mm^a_{\bullet}(L,\chi)$.  Let $\mt_{n,a}^o(L,\chi)$ be the analog of $\mt_n^o(L,\chi)$ in $\mm_{\bullet}^a(L,\chi)$.  Let $F\in\mt_{2,a}^o(L,\chi)\cap\mc_{\frac Lk,a}$ with lower and upper filtrations $\{F_i\}$ and $\{F^i\}$ (see Definition \ref{uplow}) with factors $\{Q_i\}$ and $\{R_i\}$ respectively.  Let $m$ be the length of the two filtrations.  Then $t_m=r_1\leq 2$ by Remark \ref{fib}.  If $r_1=2$, then $R_m\cong F\otimes\mo_C$ has to be locally free of rank 2.  Since $g^m_F:R_m\twoheadrightarrow R_{m-1}$ is surjective, $R_{m-1}$ is either of rank 1 or locally free of rank 2 and if $R_{m-1}$ is locally free of rank 2, $g^m_F$ is an ismorphism.   

On the other hand, $g_{\frac Lk}>0$ implies that $K_X.L\geq -\frac{L^2}k$.  By the similar argument to Proposition \ref{dlt}, we can get
\begin{eqnarray}\label{useful}&&dim~(\mt_{2,a}^o(L,\chi)\cap \mc_{\frac Lk,a}-\mt_{2}^o(L,\chi)\cap \mc_{\frac Lk})\qquad\qquad\nonumber\\ &\leq& L^2-\min_{\footnotesize {\begin{array}{c}\sum_i l_i=k;\\\forall i,0<l_i<k\end{array}}}(\sum_{i<j}l_il_j)\cdot \frac{L^2}{k^2}\nonumber\\&=&L^2-\frac{k-1}{k^2}L^2\leq L^2+\frac{k-1}kK_X.L.\end{eqnarray}

We prove the theorem case by case.  

\emph{Case 1.} $r_1=1$.  Then by Lemma \ref{rko} we are done. 

\emph{Case 2.} $r_i=2$ for all $1\leq i\leq m$.  

If $F\in \mc_{\frac Lk}^{2,\cdots,2}\cap\mt_{2}^o(L,\chi)$, then $R_i\cong Q_i\cong R_m(-(m-i)C)$ and the two filtrations coincide with all  factors locally free of rank 2.  In this case $k=2m$.  Let $R:=R_m$.  Then $c_1(R)=\frac Lm$ and we have 
\begin{equation}\label{chir}\sum_{i=0}^{m-1}\chi(R_m(-iC))=m\cdot\chi(R_m)-\frac{(m-1)m}4\cdot(\frac{L}{m})^2=\chi.\end{equation}
Hence $\chi(R)$ is fixed by $(L,\chi,k)$.  By the stability of $F$, we have
\begin{equation}\label{sir}\forall ~I\subset R ~of~rank~1,~\chi(I)< \frac{\chi}{2m}+\frac{m-1}{4m^2}L^2=\frac{\chi(R_m)}2+\frac{m-1}{8m^2}L^2.\end{equation}  

Let $\mr$ be the parametrizing stack of such $R$.  We want to show that 
\begin{equation}\label{dpr}dim~\mr\leq\frac{L^2}{m^2}+(\frac {K_X.L}m+1).\end{equation}

With no loss of generality, we assume $0<\chi(R)\leq -\frac{K_X.L}m$, then we have the following exact sequence.
\begin{equation}\label{two}0\ra\mo_C\ra R\ra I_2\ra0.
\end{equation}
By the same argument as in Proposition \ref{coha}, we get the equation in (\ref{dpr}). 

Now we do the induction. Let $\mathcal{P}_{F/F_1}$  be the parametrizing stack of $F/F_1=F/R(-(m-1)C)$.  Then by (\ref{useful}) and the induction assumption we have
\begin{equation}\label{pff} dim~\mathcal{P}_{F/F_1}\leq
\frac{L^2(m-1)^2}{m^2}+\frac{m-\frac32}mK_X.L\end{equation}

$dim~\text{Ext}^2(F/F_1,F_1)=dim~\text{Hom}(F_1,F_2/F_1(K_X))=dim~Hom(R,R(K_X+C))$.   We want to find a upper bound $N$ of $dim~\text{Hom}(R, R(K_X+C)).$  Notice that $dim~\text{Hom}(R,R(K_X+C))\leq dim~\text{Hom}(R,R)+4(2g_C-2)$  If $R$ is stable, then $\text{Hom}(R,R)\cong\mathbb{C}$.  If $R$ is not stable, then by (\ref{sir}) we have $dim~\text{Hom}(R,R)\leq 3+\frac {m-1}{4m^2}L^2.$  Therefore,
\begin{eqnarray}dim~\text{Hom}(R, R(K_X+C))\leq 3+\frac{m-1}{4m^2}L^2+4(2g_C-2)\qquad\qquad&&\nonumber\\
= 3+\frac{m-1}{4m^2}L^2+4(\frac{K_X.L}{2m}+\frac{L^2}{4m^2}) =:N&&
\end{eqnarray}

Then we have 
\begin{eqnarray}\label{fcot}dim~\mc^{2,\cdots,2}_{\frac Lk}\cap\mt_2^o(L,\chi)&\leq& dim~\mathcal{P}_{F/F_1}+N-\chi(F/F_1,F_1)\nonumber\\
&\leq& \frac{L^2(m-1)^2}{m^2}+\frac{(m-\frac32)}mK_X.L+N+\frac{L^2(m-1)}{m^2}\nonumber\\
&=&  L^2+(K_X.L+1)+\frac{K_X.L}{2m}+(2-\frac{3(m-1)}{4m^2}L^2)\nonumber\\
&\leq&  L^2+(K_X.L+1).
\end{eqnarray}
The last equation is because $K_X.\frac L{2m}\in\mathbb{Z}_{<0}$ and $(\frac{L}{2m})^2\in\mathbb{Z}_{>0}$.  

Now we compute the dimension of  $\mc^{2,\cdots,2,1,\cdots,1}_{\frac Lk}\cap\mt_2^o(L,\chi)$.  We do the induction on the number $\ell(1)$ of $1$ in the superscript of $\mc^{2,\cdots,2,1,\cdots,1}_{\frac Lk}$.   

\emph{Case 3.} $\ell(1)=1$.  

Let $F\in \mc^{2,\cdots,2,1}_{\frac Lk}\cap\mt_2^o(L,\chi)$.  Let $k=2m-1$ with $m\geq 2$.  Let $C$ be its reduced support and hence $C\in |\frac{L}{2m-1}|^{int}$.  We take the lower and upper filtrations $\{F_i\}$ and $\{F^i\}$ of $F$ with factors $\{Q_i\}$ and $\{R_i\}$ for $1\leq i\leq m$.  Then $R_m$ is a rank 2 bundle on $C$, $R_{i}\cong R_m((-m+i)C)$ for $2\leq i\leq m$ and $R_1$ is a rank 1 torsion free sheaf on $C$ with surjection $g^2_F:R_2(-C)\twoheadrightarrow R_1$.  Let $K$ be the kernel of $g^2_F$, then $K$ is torsion free of rank 1.  We have an exact sequence 
\begin{equation}\label{suf}0\ra K\ra R_2(-C)\ra R_1\ra 0.\end{equation} 

$K((m-1)C)$ is a subsheaf of $R_m$.  By the stability of $F$, we know that
 \begin{eqnarray}\label{kk}&&\chi(F^{m-1})+\chi(K(m-1)C)=\sum_{i=1}^{m-1}\chi(R_i)+\chi(K((m-1)C))\nonumber\\
 &=&(m-1)(\chi(R_1)+\chi(K))+\sum_{i=1}^{m-2}\frac {2iL^2}{(2m-1)^2}+\frac{(m-1)L^2}{(2m-1)^2}\nonumber\\ 
 &<&\frac{(2m-2)\chi}{2m-1}.\end{eqnarray}
(\ref{kk}) 
implies that 
\begin{equation}\label{kb}\chi(R_2)-\frac{2L^2}{(2m-1)^2}=\chi(R_1)+\chi(K)\leq \frac{2\chi}{2m-1}-\frac{(m-1)L^2}{(2m-1)^2}.
\end{equation}
Since $R_1$ is a quotient of $R_2(-C)$, $R_1((m-1)C)$ is a quotient of $R_m$ hence a quotient of $F$.  So 
\begin{equation}\label{cro}\chi(R_1)+\frac{(m-1)L^2}{(2m-1)^2}>\frac {\chi}{2m-1}\Leftrightarrow \chi(R_1)> \frac{\chi}{2m-1}-\frac{(m-1)L^2}{(2m-1)^2}.\end{equation}
Combine (\ref{kb}) 
and (\ref{cro}), then we get 
\begin{equation}\label{drb}\chi(K)-\chi(R_1)\leq \frac{(m-1)L^2}{(2m-1)^2},
\end{equation}
We need a upper bound for $dim~\text{Ext}^2(F/R_1,R_1)=dim~\text{Hom}(R_1,F/R_1(K_X))$.  The upper and lower filtrations of $F/R_1$ coincide.  Hence $\text{Hom}(R_1,F/R_1(K_X))=\text{Hom}(R_1,R_2(K_X))$.  Then we have 
\begin{eqnarray}\label{rrg}&&dim~\text{Ext}^2(F/R_1,R_1)=dim~\text{Hom}(R_1,R_2(K_X))\nonumber\\
&\leq& dim~\text{Hom}(R_1,R_1(K_X+C))+dim~\text{Hom}(R_1,K(K_X+C))\nonumber\\
&\leq& 4g_C-2+\chi(K)-\chi(R_1).
\end{eqnarray} 
By (\ref{drb}) 
we have
\begin{equation}\label{lei}dim~\text{Ext}^2(F/R_1,R_1)\leq N:=\frac{(m-1)L^2}{(2m-1)^2}+4g_C-2.
\end{equation}

Let $\mathcal{P}_{F/R_1}$ be the parametrizing stack of $F/R_1$.  Then $F/R_1\in\mc_{\frac{L}{2m-2},a}^{2,\cdots,2}\cap\mt^{o}_{2,a}(\frac{(2m-2)L}{2m-1},\widetilde{\chi})$.  Assume first $m\geq3$, then by Case 2 and (\ref{useful}), we have


\begin{equation}\label{dfro}dim~\mathcal{P}_{F/R_1}\leq (\frac {(2m-2)L}{2m-1})^2+\frac {(2m-3)L.K_X}{2m-1}.\end{equation}  

Hence by standard argument we have 
\begin{eqnarray}\label{exo}&dim&\mc^{2,\cdots,2,1}_{\frac Lk}\cap\mt_2^o(L,\chi)\leq dim~\mathcal{P}_{F/R_1}+g_C-1+N-\chi(F/R_1,R_1)\nonumber\\
&\leq&(\frac {(2m-2)L}{2m-1})^2+\frac {(2m-3)L.K_X}{2m-1}+g_C-1+N+\frac{(2m-2)L^2}{(2m-1)^2}\nonumber\\
&=& L^2+(K_X.L+1)+(1-g_C)+\frac{L.K_X}{2m-1}-\frac{(m-3)L^2}{(2m-1)^2}+1\nonumber\\&\leq& L^2+(K_X.L+1)~for~m\geq3.
\end{eqnarray}

Let $m=2$, then $F/R_1=R_2$ and for fixed $K$ and $R_1$, $R_2$ is given by (\ref{suf}).  Hence we have 
\begin{eqnarray}\label{ear}&&dim~\mc^{2,1}_{\frac L3}\cap\mt_2^o(L,\chi)\nonumber\\&=&dim~|\frac L3|+2(g_C-1)-\chi(R_1,K)-\chi(R_2,R_1)\nonumber\\&&+dim~\text{Hom}(K,R_1(K_X))+dim~\text{Hom}(R_1,R_2(K_X))\nonumber\\
&\leq&dim~|\frac L3|+2(g_C-1)-\chi(R_1,K)-\chi(R_2,R_1)+dim~\text{Hom}(K,R_1(K_X))\nonumber\\&&\small{+dim~\text{Hom}(R_1,K(K_X+C))+dim~\text{Hom}(R_1,R_1(K_X+C))}\nonumber\\
&\leq& \frac 12(\frac L3)^2-\frac{K_X.L}6+2(g_C-1)+\frac{L^2}9+\frac{2L^2}9+4g_C-2+\frac{K_X.L}3+1\nonumber\\
&=&L^2+K_X.L+1-\frac{5L^2}{18}+2+\frac{K_X.L}6\nonumber\\
&=&\small{L^2+K_X.L+1+1-g_C+2+\frac{K_X.L}3-\frac {2L^2}9\leq L^2+K_X.L+1.}
\end{eqnarray}
Hence we are done for $\ell(1)=1$.

\emph{Case 4: The last case.} $\ell(1)\geq 2$.   

Let $F\in\mc_{\frac Lk}^{2,\cdots,2,1,\cdots,1}\cap\mt_{2}^o(L,\chi) $ with $\ell(1)\geq 2$.   Let $m_i=\ell(i)$ for $i=1,2$.  Then $m_1\geq2$ and $k=m_1+2m_2\geq4$.  Let $C$ be the reduced support of $F$.  $g_C>0$.  By doing the upper filtration, we can write $F$ into the following sequence
\begin{equation}0\ra F'\ra F\ra F''\ra 0,
\end{equation}  
with $F'\in\mc_{\frac Lk,a'}^{1,\cdots,1}$ and $F''\in\mc_{\frac{L}{k},a''}^{2,\cdots,2}\cap \mt_{2,a''}^{o}(\frac{2m_2}{m_1+2m_2}L,\chi'').$   

Take the upper and lower filtrations of $F'$ with graded factors $\{R'_i\}$ and $\{Q'_i\}$.  Then both $R'_i$ and $Q'_i$ are of rank 1.  Denote by $R'^{tf}_i$ the quotient of $R'_i$ module its torsion.  Then $Q'_{m_1}=R'^{tf}_{m_1}$.  

We know that the upper and lower filtrations of $F''$ coincide.  Let $R''_i$ be the factors.  Then $\{R''_i,R'_i\}$ is the set of graded factors of the upper filtration for $F$ and hence we have a surjection $g_F^{m_1+1}:R''_1(-C)\twoheadrightarrow R'_{m_1}$.  Hence we have a surjection $p^1_{m_1+1}:R''_1(-C)\twoheadrightarrow Q'_{m_1}$ as $Q'_{m_1}$ is a quotient of $R'_{m_1}$.  Let 
$K_{m_1}$ be the kernel of 
$p^1_{m_1+1}$.
\begin{equation}\label{kqro}0\ra K_{m_1}\ra R''_1(-C)\ra Q'_{m_1}\ra0.
\end{equation}
Denote by 
$P_{m_1}$ the subsheaf of $F/F'_{m_1-1}$ given by the following extension.
\begin{equation}\label{pqro}0\ra Q'_{m_1}\ra P_{m_1}\ra K_{m_1}(C)\ra 0.
\end{equation}
Then $P_{m_1}$ is a $\mo_C$-module, i.e. it is a rank 2 torsion free sheaf on $C$.  This is because $p^1_{m_1+1}$ is defined by acting $\delta_C$ on $F/F'_{m_1-1}$ and $K_{m_1}$ is the kernel which implies $\delta_C\cdot P_{m_1}=0.$  Moreover,  $P_{m_1}$ is the maximal subsheaf of $F/F'_{m_1-1}$ annihilated by $\delta_C$, since $Q'_{m_1}$ is torsion free of rank 1.   

Again we have a map $p^1_{m_1}:P_{m_1}(-C)\ra Q'_{m_1-1}$ inducing the injection $f_{F'}^{m_1}:Q'_{m_1}(-C)\hookrightarrow Q'_{m_1-1}.$  The map $p^1_{m_1}$ might not be surjective and we denote by $S'_{m_1-1}$ its image in $Q'_{m_1-1}$.  We have $Q'_{m_1}(-C)\subset S'_{m_1-1}\subset Q'_{m_1-1}$.  Let $S'_{m}=Q'_m$.  

Let $K_{m_1-1}$ be the kernel of $p^1_{m_1}$, then 
\begin{equation}\label{kcc}\chi(K_{m_1})+\chi(Q'_{m_1}(-C))-\chi(Q'_{m_1-1})\leq\chi(K_{m_1-1})\leq \chi(K_{m_1}).
\end{equation}   

Again we have a subsheaf $P_{m_1-1}$ of $F/F'_{m_1-2}$ such that $P_{m_1-1}$ is a rank 2 torsion free sheaf on $C$ lying in the following exact sequence.
\begin{equation}\label{pqrt}0\ra Q'_{m_1-1}\ra P_{m_1-1}\ra K_{m_1-1}(C)\ra0.
\end{equation}

We repeat this procedure to define $K_i$, $P_i$ and $S'_i$ for $1\leq i\leq m_1$, and finally we get
\begin{equation}\label{pqrl}0\ra Q'_{1}\ra P_{1}\ra K_1(C)\ra0.
\end{equation}
Since $P_1$ is rank 2 and $F/P_1$ is torsion-free on $C$, $P_1=F_1$ with $\{F_i\}$ the lower filtration of $F$.  

By (\ref{kcc}), (\ref{kqro}), (\ref{pqro}) and the recursion on $i$, we have $\forall~1\leq i\leq m_1-1,$
\begin{equation}\label{alli}\chi(K_i)+\chi(Q'_i)\geq \chi(K_{i+1})+\chi(Q'_{i+1})-C.C\geq \chi(K_{m_1})+\chi(Q'_{m_1})-(m_1-i)C.C,\end{equation} 

On the other hand, by Statement (3) in Lemma \ref{cop}, we have 
\begin{eqnarray}\label{pot}&&\chi(P_1)=\chi(R''_{m_2})-(m_1+2m_2-2)C^2\nonumber\\&\Rightarrow& \chi(K_1)+\chi(Q'_1)= \chi(K_{m_1})+\chi(Q'_{m_1})-(m_1-1)C.C.\end{eqnarray}

Combine (\ref{alli}) and (\ref{pot}), then we have $\forall~1\leq i\leq m_1-1$,
\begin{equation}\label{ally}\chi(K_i)+\chi(Q'_i)= \chi(K_{i+1})+\chi(Q'_{i+1})-C.C= \chi(K_{m_1})+\chi(Q'_{m_1})-(m_1-i)C.C.\end{equation} 
Hence $S'_{i}=Q'_i$ for all $1\leq i\leq m_1$.

Let $G_{(1)}=F/P_1$, then $G_{(1)}\in\mc_{\frac Lk,b_1}^{2,\cdots,2,1,\cdots,1}\cap\mt_{2,b}^o(\frac {m_1+2m_2-2}{m_1+2m_2}L,\chi_{(1)}) $ with $\ell(1)=m_1,$ $\ell(2)=m_2-1$.  Also we can write $G_{(1)}$ into the following sequence
\begin{equation}0\ra G'_{(1)}\ra G_{(1)}\ra G''_{(1)}\ra 0,
\end{equation}  
with $G'_{(1)}\in\mc_{\frac Lk,b'_1}^{1,\cdots,1}$ and $G''_{(1)}\in\mc_{\frac{L}{k},b''_1}^{2,\cdots,2}\cap \mt_{2,b''_1}^{o}(\frac{2m_2-2}{m_1+2m_2}L,\chi''_{(1)}).$   We see that $\chi(G''_{(1)})=\sum_{i=2}^{m_2}\chi(R''_i)$ and $\chi(G'_{(1)})=\sum_{i=1}^{m_1}\chi(S'_i(C))= \chi(F')+\frac{m_1L^2}{(m_1+2m_2)^2}$.  

We do the same procedure to $G_{(1)}$ as we did to $F$ and we can get $G_{(2)}$ with $\ell(1)=m_1$ and $\ell(2)=m_2-2$.  After $m_2$ steps, we finally get $G_{(m_2)}\in\mc^{1,\cdots,1}_{\frac Lk,b_{m_2}}$.  $G_{(m_2)}$ is a quotient of $F$.  Moreover by the stability of $F$, we have $\frac{m_1\chi}{m_1+2m_2}<\chi(G_{(m_2)})\leq \chi(F')+\frac{m_1m_2L^2}{(m_1+2m_2)^2}$.  Therefore we have
\begin{eqnarray}\label{sat}m_1\chi(Q'_1)&+&\sum_{i=1}^{m_1-1}\frac{L^2}{(m_1+2m_2)^2}\geq \chi(F')>\frac{m_1\chi}{m_1+2m_2}-\frac{m_1m_2L^2}{(m_1+2m_2)^2}\nonumber\\
\Rightarrow \chi(Q'_1)&>&\frac{\chi}{m_1+2m_2}-\frac{(2m_2+m_1-1)L^2}{2(m_1+2m_2)^2}.\end{eqnarray}

We also have 
\begin{eqnarray}\label{satt}&&\chi-\chi(F')=\chi(F'')=(\chi(Q'_{m_1})+\chi(K_{m_1}))m_2+\sum_{i=1}^{m_2}\frac{2iL^2}{(m_1+2m_2)^2}\nonumber\\
&\Rightarrow& \chi(Q'_{m_1})+\chi(K_{m_1})<\frac{2\chi}{m_1+2m_2}+\frac{m_1-m_2-1}{(m_1+m_2)^2}L^2\nonumber\\
&\Rightarrow&  \chi(Q'_{1})+\chi(K_{1})<\frac{2\chi}{m_1+2m_2}-\frac{m_2}{(m_1+m_2)^2}L^2.\end{eqnarray}
Combine (\ref{sat}) and (\ref{satt}) we have
\begin{equation}\label{fast}\chi(K_1)-\chi(Q'_1)<\frac{(m_1+m_2-1)L^2}{(m_1+2m_2)^2}.\end{equation}

Let $\mathcal{P}_{F/Q'_1}$ be the parametrizing stack of $F/Q'_1$.  By (\ref{useful}) and the induction assumption on $\ell(1)$, we have 
\begin{equation}\label{time}dim~\mathcal{P}_{F/Q'_1}\leq \frac {(m_1+2m_2-1)^2L^2}{(m_1+2m_2)^2}-\frac{(m_2+2m_2-2)}{m_1+2m_2}K_X.L.\end{equation}
Let $\widetilde{F}_1$ be the maximal subsheaf of $F/Q'_1$ annihilated by $\delta_C$.  Then we have the following sequence.
\[0\ra K_1(C)\ra \widetilde{F}_1\ra S'_1(C)\ra 0.\]
Notice that $S'_1=Q'_1$.  Hence
\begin{eqnarray}\label{rrgf}&&~~~dim~\text{Ext}^2(F/Q'_1,Q'_1)=dim~\text{Hom}(Q'_1,\widetilde{F}_1(K_X))\nonumber\\
&\leq &dim~\text{Hom}(Q'_1,K_1(K_X+C))+dim~\text{Hom}(Q'_1,Q'_1(K_X+C))\nonumber\\
\leq &4g_C-2&+\chi(K_1)-\chi(Q'_1)<4g_C-2+\frac{(m_1+m_2-1)L^2}{(m_1+2m_2)^2}=:N.
\end{eqnarray} 

Now combine (\ref{time}) and (\ref{rrgf}) and we get an analogous formula to (\ref{exo}) as follows.  
\begin{eqnarray}\label{exi}&&dim~\mc_{\frac Lk}^{2,\cdots,2,1,\cdots,1}\cap\mt_{2}^o(L,\chi)\leq dim~\mathcal{P}_{F/Q'_1}+g_C-1+N-\chi(F/Q'_1,Q'_1)\nonumber\\
&\leq&(\frac {(m_1+2m_2-1)L}{m_1+2m_2})^2+\frac {(m_1+2m_2-2)L.K_X}{m_1+2m_2}\nonumber\\&&+g_C-1+N+\frac{(m_1+2m_2-1)L^2}{(m_1+2m_2)^2}\nonumber\\
&=& L^2+(K_X.L+1)+(1-g_C)+\frac{L.K_X}{2m-1}-\frac{(m_2-2)L^2}{(2m-1)^2}+1\nonumber\\&\leq& L^2+(K_X.L+1)~for~m_2\geq2.
\end{eqnarray}

If $m_2=1$, then $F/P_1\in\mc_{\frac Lk,a}^{1,\cdots,1}$ and by Lemma \ref{rko} the parametrizing stack $\mathcal{P}_{F/P_1}$ has dimension $\leq (\frac {m_1L}{m_1+2})^2+(\frac{m_1L.K_X}{m_1+2}+1)$.  On the other hand, $\text{Hom}(P_1,F/P_1(K_X))=\text{Hom}(P_1,S'_1(K_X+C))=\text{Hom}(P_1,Q'_1(K_X+C)).$

By (\ref{kqro}), (\ref{pot}) and the stability of $F$ we have
\begin{equation}\label{gcrt}\chi(Q'_1)+\chi(K_1)=\chi(R''_{1})-\frac{(m_1+1)L^2}{(m_1+2)^2}> \frac{2\chi}{m_1+2}-\frac{(m_1+1)L^2}{(m_1+2)^2}.\end{equation}

On the other hand, $Q'_1$ is a subsheaf of $F$.  Hence $\chi(Q'_1)\leq \frac{\chi}{(m_1+2)}$, then by (\ref{gcrt}) we have 
\begin{equation}\label{ubk}\chi(K_1)> \frac{\chi}{m_1+2}-\frac{(m_1+1)L^2}{(m_1+2)^2}.
\end{equation}
Hence $\chi(Q'_1)-\chi(K_1)<\frac{m_1+1}{(m_1+2)^2}L^2$.  Therefore
\begin{eqnarray}\label{grrg}&&dim~\text{Ext}^2(F/P_1,P_1)\leq dim~\text{Hom}(P_1,Q'_1(K_X+C))\nonumber\\
&\leq&dim~\text{Hom}(Q'_1,Q'_1(C+K_X))+dim~\text{Hom}(K_1(C),Q'_1(C+K_X))\nonumber\\
&\leq& 4g_C-2+\chi(Q'_1)-\chi(K_1(C))\nonumber\\
&\leq& 4g_C-2+\frac{m_1L^2}{(m_1+2)^2}=:N.
\end{eqnarray} 
Proposition \ref{coha} gives a upper bound for the dimension of the parametrizing stack of $P_1$.  By using analogous estimate to (\ref{exo}), we have
\begin{eqnarray}\label{eff}&&dim~\mc^{2,1,\cdots,1}_{\frac Lk}\cap\mt_2^o(L,\chi)\leq dim~\mathcal{P}_{F/P_1}+dim~\mathcal{P}_{P_1}+N-\chi(F/P_1,P_1)\nonumber\\
&\leq&(\frac {L}{m_1+2})^2(m_1^2+3m_1+5\frac12)+K_X.L+1+\frac{3K_X.L}{2(m_1+2)}+3\nonumber\\
&=& L^2+(K_X.L+1)+2(\frac{K_X.L}{m_1+2}+1)+(1-g_C)-\frac{(m_1-2)L^2}{(m_1+2)^2}\nonumber\\&\leq& L^2+(K_X.L+1)~for~m_1\geq2.
\end{eqnarray}
We proved the case $m_2=2$. 

The theorem is proved.  
\end{proof}

Yao YUAN \\
Yau Mathematical Sciences Center, Tsinghua University, \\
Beijing 100084, China\\
E-mail: yyuan@mail.math.tsinghua.edu.cn.

\end{document}